\documentclass[10pt]{article}

%packages

\usepackage{amsmath}
\usepackage{amsthm}
\usepackage{mathrsfs}	%for \mathscr{} script
\usepackage{amssymb}
\usepackage{amscd}
\usepackage{setspace}	%Pro­vides sup­port for set­ting the spac­ing be­tween lines in a doc­u­ment with the \sin­glespac­ing, \one­half­s­pac­ing, and \dou­blespac­ing com­mands. Other size spac­ings also avail­able.
\usepackage{tikz}		%graph drawing
\usepackage{tikz-cd}
\usetikzlibrary{matrix}	%diagram drawing using tikz
\usepackage{enumerate}	%to get enumerate to use roman numerals [i], [I], [i.], [i)], [(i)]
\usepackage{bbm}		%to get blackboard 1 \mathbbm{1}

\usepackage[utf8]{inputenc}	% for chapters, theorems and proofs
\usepackage[english]{babel}	% for theorems
\usepackage{blindtext}		% for chapters
\usepackage[affil-it]{authblk}	%for affiliation

\usepackage{faktor}			%for modding out stuff command \faktor{n}{d} left
\usepackage{xparse}			%for the macro \rfactor below right
\DeclareDocumentCommand{\rfaktor}{m O{-0.5} m O{0.5}}{% \rfaktor{#1}[#2]{#3}[#4] -> #1/#3
  	\raisebox{#2\height}{\ensuremath{#1}}% Denominator
 	 \mkern-5mu\diagdown\mkern-4mu% Slash /
 	 \raisebox{#4\height}{\ensuremath{#3}}% Numerator
	}
\usepackage{cancel} %for diagonal strikethrough. Command: \cancel{}. more at https://jansoehlke.com/2010/06/strikethrough-in-latex/
\usepackage{indentfirst}

%shortcuts
		%statements 

\DeclareMathOperator{\f.a.}{~for~all~}

%my shortcuts
\newcommand{\ds}{\displaystyle}
\newcommand{\abs}[1]{\left\lvert#1\right\rvert}   					%absolute value
\newcommand{\norm}[1]{\left\lVert#1\right\rVert}					%norm
		%tripple bar norm
\newcommand{\iso}{\cong}									%isomorphism symbol 
\newcommand{\lrangle}[1]{\left\langle#1\right\rangle}					%langle rangle
\newcommand{\opint}[1]{\left(#1\right)}							%auto parentheses size
							%auto brackets size
				%fix chi bad subscripts
\newcommand{\set}[1]{\left\{#1\right\}}		 					%auto curlys 
\newcommand{\maxset}[1]{\text{max}\left\{#1\right\}}					%auto curlys on max
\newcommand{\minset}[1]{\text{min}\left\{#1\right\}}					%auto curlys on min
\newcommand{\cu}[1]{C_u^*\left(#1\right)}							%uniform roe algebra
\newcommand{\cualg}[2]{\mathbb{C}_u^{#1}\left[#2\right]} 			% algebraic uniform roe algebra
						%weak/Pettis integral
\newcommand{\sub}{\subseteq}

\newcommand{\h}{\hspace{.1 in}}								%horizontal spacing 
\newcommand{\cl}[1]{\overline{#1}}								%overline
\newcommand{\os}[2]{\overset{#1}{#2}}							%overset

\DeclareMathOperator{\dd}{d}

\DeclareMathOperator{\im}{im}
\DeclareMathOperator{\ev}{ev}

%typeset commands
		%mathbf

%\newcommand{\FF}{\mathbf{F}}
%\newcommand{\GG}{\mathbf{G}}

\newcommand{\N}{\mathbb{N}}			%mathbb

\newcommand{\C}{\mathbb{C}}

\newcommand{\D}{\mathbb{D}}

\newcommand{\BB}{\mathscr{B}} 	%mathscr

\newcommand{\KK}{\mathscr{K}}
\newcommand{\LL}{\mathscr{L}}

\newcommand{\A}{\mathcal{A}} 	%mathcal
\newcommand{\HH}{\mathcal{H}}

\newcommand{\UU}{\mathcal{U}} 
\newcommand{\VV}{\mathcal{V}}
\newcommand{\WW}{\mathcal{W}}

%-1	\part{part}
%0	\chapter{chapter}
%1	\section{section}
%2	\subsection{subsection}
%3	\subsubsection{subsubsection}
%4	\paragraph{paragraph}
%5	\subparagraph{subparagraph}
%\part and \chapter are only available in report and book document classes.
\newtheorem{theorem}{Theorem}[section]     
% The command \newtheorem{theorem}{Theorem} has two parameters, the first one is the name of the environment that is defined, the second one is the word that will be printed, in boldface font, at the beginning of the environment. This has the additional parameter [section] that restarts the theorem counter at every new section.

% A environment called corollary is created, the counter of this new environment will be reset every time a new theorem environment is used.
\newtheorem{lemma}[theorem]{Lemma}
% In this case, the even though a new environment called lemma is created, it will use the same counter as the theorem environment.

\theoremstyle{remark}
% The command \theoremstyle{ } sets the styling for the numbered environment defined right below it.
\newtheorem{remark}[theorem]{Remark}
% The syntax of the command \newtheorem* is the same as the non-starred version, except for the counter parameters. In this example a new unnumbered environment called remark is created.

\theoremstyle{definition}
\newtheorem{definition}[theorem]{Definition}

\onehalfspacing
\usepackage[utf8]{inputenc}

\title{The Hochschild Cohomology of Uniform Roe Algebras}
\author{Matthew Lorentz}
\date{}

\begin{document}

\maketitle

\begin{abstract}

In Rufus Willett's and the authors paper ``Bounded Derivations on Uniform Roe Algebras" \cite{lorentz2020} we showed that all bounded derivations on a uniform Roe algebra $C^*_u(X)$ associated to a bounded geometry metric space $X$ are inner.
This naturally leads to the question of whether or not the higher dimensional Hochschild cohomology groups of the uniform Roe algebra vanish also.
While we cannot answer this question completely, we are able to give necessary and sufficient conditions for the vanishing of $H^n_c(C_u^*(X),C_u^*(X))$.\\
Lastly, we show that if the norm continuous Hochschild cohomology of a uniform Roe algebra vanishes in all dimensions then the ultraweak-weak* continuous Hochschild cohomology of that uniform Roe algebra vanishes also.
\end{abstract}

\section{\centering Introduction}

	Uniform Roe algebras are a well-studied class of non-separable $C^*$-algebras associated to metric spaces.  They were originally introduced for index-theoretic purposes, but are now studied for their own sake as a bridge between $C^*$-algebra theory and coarse geometry, as well as having interesting applications to single operator theory and mathematical physics.  Due to the presence of $\ell^\infty(X)$ as a diagonal maximal abelian subalgebra, they have a somewhat von Neumann algebraic feel, but are von Neumann algebras only in the trivial finite-dimensional case.   
Moreover, in many ways they are quite tractable as $C^*$-algebras, often having good regularity properties such as nuclearity.

Hochschild cohomology was introduced by Gerhard Hochschild in his 1945 paper \textit{On the Cohomology Groups of an Associative Algebra} \cite{hochschild1945cohomology}. The Hochschild cohomology of associative algebras has become a useful object of study in many fields of mathematics such as representation theory, mathematical physics, and noncommutative geometry, to name a few. 

In Rufus Willett's and the authors paper ``Bounded Derivations on Uniform Roe Algebras" \cite{lorentz2020} we showed that all bounded derivations on a uniform Roe algebra $C^*_u(X)$ associated to a bounded geometry metric space $X$ are inner.
That all bounded derivations are inner is equivalent to the first norm continuous Hochschild cohomology group $H^1_c(C_u^*(X),C_u^*(X))$ vanishing.
Indeed, the Hochschild coboundary operator from a C*-algebra $\A$ to the linear maps from $\A$ to itself is given by 
$$
\partial a (b) = ab-ba,~ a,b \in \A.
$$
Thus, $\partial a$ is an inner derivation.
Next, the coboundary operator from a linear map $\phi$ to bilinear map from $\A$ to itself is given by
$$
\partial\phi(a,b) = a\phi(b) - \phi(ab) + \phi(a)b.
$$
Hence, the kernel of this coboundary operator is the set of derivations on $\A$. 
So, taking this kernel and modding out by the image of the previous coboundary, if zero, means that all derivations on $\A$ are inner.
Thus,  the first Hochschild cohomology of uniform Roe algebras associated to bounded geometry metric spaces vanishes.
It is then natural to ask if the higher groups $H^n_c(C_u^*(X),C_u^*(X))$ also vanish.

	The question of whether or not the Hochschild cohomology vanishes in all dimensions for a hyperfinite von Neumann algebra has been answered completely by Kadison and Ringrose.

	\begin{theorem}[\cite{MR318907} Theorem 3.1]
	\label{iszero}
	The Hochschild cohomology of a hyperfinite von Neumann algebra vanishes in all dimensions. 
	\end{theorem}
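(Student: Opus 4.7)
The plan is to combine the defining property of hyperfiniteness with an averaging argument over compact unitary groups, following the classical Kadison--Ringrose strategy. Since a hyperfinite von Neumann algebra $R$ can be written as the ultraweak closure of an increasing net $\{R_\alpha\}$ of finite-dimensional $*$-subalgebras, the strategy is to modify an arbitrary continuous $n$-cocycle by a coboundary so that it vanishes on a weakly dense subalgebra, and then extend by continuity.

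The first step is the finite-dimensional case: for a finite-dimensional $C^*$-algebra $A$, the Hochschild cohomology $H^n(A, X)$ vanishes for any $A$-bimodule $X$ and any $n \geq 1$. This can be shown either by the algebraic route (finite-dimensional semisimple algebras are separable, so $A \otimes A^{op}$ is semisimple and every bimodule is projective), or directly by averaging the cocycle against normalized Haar measure on the compact unitary group $\UU(A)$ and invoking the Hochschild coboundary identity to realize the result as a coboundary.

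The second step is the averaging reduction. Given a continuous $n$-cocycle $\phi: R^n \to R$, for each index $\alpha$ one uses the compact group $\UU(R_\alpha)$ together with its Haar measure to define a cohomologous cocycle $\phi_\alpha$ which is $R_\alpha$-multimodular, i.e., $\phi_\alpha(a_0 x_1 a_1, \ldots, a_{n-1} x_n a_n) = a_0 \phi_\alpha(x_1, \ldots, x_n) a_n$ for all $a_i \in R_\alpha$ and all $x_i \in R$. Combining multimodularity with the cocycle identity and the finite-dimensional vanishing from step one, one can then subtract a coboundary of the form $\partial \psi_\alpha$ (with $\psi_\alpha$ itself chosen to be $R_\alpha$-multimodular) to arrange that the modified cocycle is identically zero on $R_\alpha^n$.

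The main obstacle is the limiting step: passing from ``trivial on each $R_\alpha$'' to ``globally trivial.'' The difficulty is that one must make coherent choices of the cobounding cochains $\psi_\alpha$ along the net, with uniform norm control, so that an ultraweak limit exists and defines a bounded $(n-1)$-cochain whose coboundary is $\phi$. The tools to overcome this are Banach--Alaoglu compactness of norm-bounded sets in $R$ in the ultraweak topology, careful estimates on $\|\psi_\alpha\|$ in terms of $\|\phi\|$ arising from the explicit averaging formulas, and the Kaplansky density theorem to promote vanishing on $\bigcup_\alpha R_\alpha$ to vanishing on all of $R$ via the continuity hypothesis on $\phi$.
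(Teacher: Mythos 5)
First, a caveat: the paper does not prove this statement; it is imported verbatim from Kadison and Ringrose, so I am comparing your sketch against the argument in the cited source rather than against anything in the text above.

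Your outline follows the right general strategy (finite-dimensional vanishing, averaging over unitary groups of the $R_\alpha$ to achieve multimodularity, then a limit over the net), and you correctly identify the limiting step as the crux. But that step, as you describe it, does not close, for two concrete reasons. First, $\bigcup_\alpha R_\alpha$ is only \emph{ultraweakly} dense in $R$, not norm dense, so ``Kaplansky density plus the continuity hypothesis on $\phi$'' cannot promote vanishing on $\bigcup_\alpha R_\alpha^n$ to vanishing on $R^n$: norm continuity of a multilinear map gives no control off a weakly dense set. The argument only has a chance if one first replaces $\phi$ by a cohomologous \emph{separately ultraweakly continuous} cocycle (the reduction $H^n_c\cong H^n_w$ of Johnson--Kadison--Ringrose, quoted in this paper as Theorem \ref{theorem3.3.1}); your proposal omits this reduction entirely, and without it the weak-density strategy cannot even start. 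Second, even granting normality of $\phi$, a pointwise weak*-limit point $\psi$ of the bounded net $(\psi_\alpha)$ obtained from Banach--Alaoglu need not be separately ultraweakly continuous, so the residual cocycle $\phi-\partial\psi$ loses exactly the continuity you need to exploit weak density. This is why Kadison and Ringrose do not argue via ``vanishes on each $R_\alpha^n$ plus a limit of cobounding cochains'': they instead arrange multimodularity with respect to the weakly dense subalgebra generated by \emph{all} the $\mathcal{U}(R_\alpha)$ at once (their ``extended cobounding''), observe that a normal cocycle which is multimodular over a weakly dense subalgebra is multimodular over all of $R$, and then check directly that a fully $R$-multimodular cocycle cobounds via the explicit cochain $(a_1,\dots,a_{n-1})\mapsto a_1\cdots a_{n-1}\,\phi(1,\dots,1)$. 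No limit of the $\psi_\alpha$ is ever taken, which is precisely how the coherence and normality problems you flag are avoided. As written, your proposal names the obstacle but the tools you list do not overcome it.
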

	
Additionally, there have been many advancements for von Neumann algebras in general. For examples see Sinclair and Smith's book ``Hochschild cohomology of von Neumann algebras" \cite{sinclair1995hochschild}.

	 While we are not able to answer the question of whether or not the Hochschild cohomology vanishes in all dimensions for uniform Roe algebras, in Section \ref{sec5} we are able to give conditions for the vanishing of the higher dimensional Hochschild cohomology of a uniform Roe algebra. Specifically:
	 
	 \begin{theorem}[cf. Theorem \ref{main 2}] If every element of $H_c^n(\cu{X})$ admits a weakly continuous representation, then $H_c^n(\cu{X})=0$. 
	\end{theorem}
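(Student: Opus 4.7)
The plan is to leverage Theorem \ref{iszero} (Kadison--Ringrose) by extending a weakly continuous cocycle on $\cu{X}$ to its weak operator closure in $B(\ell^2(X))$, applying the vanishing result there, and then descending the resulting coboundary back to $\cu{X}$.

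First I would unpack the hypothesis: given a class $[\phi] \in H_c^n(\cu{X},\cu{X})$, choose a weakly continuous representative $\phi : \cu{X}^n \to \cu{X}$; it suffices to show such a $\phi$ is a coboundary. Next, observe that because every rank-one operator $e_x \otimes e_y^*$ has finite propagation $d(x,y)$, $\cu{X}$ contains all finite-rank operators on $\ell^2(X)$, so its weak operator closure is $B(\ell^2(X))$. By the assumption of separate weak continuity plus boundedness, $\phi$ extends uniquely by WOT-density (on norm bounded sets) to a separately ultraweakly continuous multilinear map $\tilde\phi : B(\ell^2(X))^n \to B(\ell^2(X))$, and the cocycle identity passes to the extension because multiplication and the bimodule action are ultraweakly continuous on bounded sets.

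Since $B(\ell^2(X))$ is a hyperfinite type I factor, Theorem \ref{iszero} supplies an $(n-1)$-cochain $\tilde\psi : B(\ell^2(X))^{n-1} \to B(\ell^2(X))$ with $\partial \tilde\psi = \tilde\phi$. Restricting to $\cu{X}$ gives $\psi := \tilde\psi|_{\cu{X}^{n-1}}$, and one has $\partial \psi = \phi$ as $B(\ell^2(X))$-valued maps, while the right-hand side is $\cu{X}$-valued by construction. The task is therefore to replace $\psi$ by a cochain with values in $\cu{X}$ without changing its coboundary. I would attempt this by producing a $\cu{X}$-bimodule projection $E : B(\ell^2(X)) \to \cu{X}$ (for example by averaging, by using an ultrafilter limit of the band-truncation maps that keep matrix entries within propagation $\le R$, or by exploiting injectivity of $\cu{X}$ as an operator bimodule when $X$ has sufficient regularity). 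Composing $E$ with $\psi$ in each output produces $\psi' := E \circ \psi$, and since $E$ is a $\cu{X}$-bimodule map it commutes with $\partial$ on cochains evaluated at inputs from $\cu{X}$, so $\partial \psi' = E \circ \partial \psi = E(\phi) = \phi$.

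The main obstacle is precisely the construction of a $\cu{X}$-bimodule contractive projection $E : B(\ell^2(X)) \to \cu{X}$; this is where the argument is genuinely delicate, since $\cu{X}$ is generally not injective as an operator bimodule and a naive Hahn--Banach extension will not be bimodular. I expect the actual proof either builds $E$ by a careful band-truncation averaging using Lipschitz partitions of unity from the bounded geometry of $X$, or bypasses the need for such an $E$ by working instead with a sequence of completely bounded approximations of the identity on $\cu{X}$ and taking a point--weak* limit of the corresponding modifications of $\tilde\psi$. Either route reduces the problem to an approximation-theoretic question on $\cu{X}$ itself, and the weak continuity of $\phi$ is exactly what keeps this approximation compatible with taking the coboundary.
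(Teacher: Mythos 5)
The first half of your argument is sound and in fact parallels the paper: extending a separately weakly continuous cocycle to $\BB(\ell^2(X))$ and invoking Theorem \ref{iszero} shows that $\phi=\partial\psi$ for some $\psi$ with values in $\BB(\ell^2(X))$, which is exactly the statement $H_c^n(\cu{X},\BB(\ell^2(X)))=0$ that the paper gets from Sinclair--Smith. The entire content of the theorem is the descent of coefficients from $\BB(\ell^2(X))$ back to $\cu{X}$, and the mechanism you propose for that descent cannot work.

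A contractive $\cu{X}$-bimodule projection $E:\BB(\ell^2(X))\to\cu{X}$ does not exist when $X$ is infinite. Since $\ell^\infty(X)\subseteq\cu{X}$, such an $E$ would in particular be $\ell^\infty(X)$-bimodular, and since each matrix unit $e_{xy}$ has finite propagation it lies in $\cu{X}$ and is fixed by $E$. Then for every $a\in\BB(\ell^2(X))$ one computes $p_xE(a)p_y=E(p_xap_y)=a_{xy}E(e_{xy})=a_{xy}e_{xy}$, so $E(a)$ and $a$ have identical matrix entries and $E$ is the identity map, whose range is not contained in $\cu{X}$. The same computation rules out any band-truncation or ultrafilter-limit construction that is required to be bimodular and to fix $\cu{X}$, so the "main obstacle" you flag is not merely delicate but insurmountable along that route. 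The paper sidesteps it by correcting $\phi$ rather than $\psi$: averaging over the unitary group of $\ell^\infty(X)$ produces the coboundary $\partial K_n\phi$ such that $\phi-\partial K_n\phi$ is $\ell^\infty(X)$-multimodular (Lemma \ref{lemma3.2.4}), multimodular cocycles are automatically $\cu{X}$-valued (Lemma \ref{multimodularequal}), and the step where the weak continuity hypothesis is genuinely consumed is showing, via the multilinear Braga--Farah uniform approximation result (Theorem \ref{bf corollary}), that $K_n\phi$ itself takes values in $\cu{X}$, so that $\partial K_n\phi$ is a coboundary in $\LL_c^n(\cu{X})$ and not merely in $\LL_c^n(\cu{X},\BB(\ell^2(X)))$. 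Without an argument of that kind your proof does not close.
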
 
	
\noindent	Note that, since all derivations are automatically weakly continuous by \cite{10.2307/1970433} Lemma 3, the previous theorem contains the derivations theorem as a special case.
	
	Lastly, we show:
	\begin{theorem}[cf. Theorem \ref{main 3}]
	If the norm continuous Hochschild cohomology of a uniform Roe algebra vanish in all dimensions then the ultraweak-weak* continuous Hochschild cohomology of that uniform Roe algebra vanish in all dimensions.
	\end{theorem}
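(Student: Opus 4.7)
Set $\mathcal{A} = \cu{X}$ and view it inside the von Neumann algebra $\mathcal{A}'' \subseteq \mathcal{B}(\ell^2(X))$, so that the weak$^*$ (equivalently, ultraweak) topology on $\mathcal{A}$ is the restriction of the ultraweak topology on $\mathcal{A}''$. The ultraweak-weak$^*$-continuous Hochschild cochain complex is a subcomplex of the norm-continuous one, inducing a natural comparison map $H^n_{w^*}(\mathcal{A},\mathcal{A}) \to H^n_c(\mathcal{A},\mathcal{A})$ in every degree. Because the hypothesis forces the target to vanish, it suffices to prove this map is injective: every weak$^*$-continuous cocycle which is a norm-continuous coboundary is actually a weak$^*$-continuous coboundary.

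\textbf{Induction.} I would proceed by induction on $n$. The case $n=1$ follows from the bounded derivations result of \cite{lorentz2020} together with the automatic weak$^*$-continuity of bounded derivations from \cite{10.2307/1970433} Lemma 3. For the inductive step, assume the result in all degrees less than $n$ and take a weak$^*$-continuous cocycle $\phi$ of degree $n$ with $\phi = \partial\psi$ for some merely norm-continuous cochain $\psi$. The goal is to produce weak$^*$-continuous $\widetilde\psi$ with $\partial\widetilde\psi = \phi$.

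\textbf{Averaging.} The main technical tool is averaging over the unitary group $U(\ell^\infty(X))$ of the diagonal subalgebra $\ell^\infty(X) \subset \mathcal{A}$, which is abelian, hence amenable, and so carries an invariant mean $m$. Form the weak$^*$ Pettis integral
\[
\widetilde\psi(a_1,\dots,a_{n-1}) \;=\; m_u\bigl[\, u^*\psi(ua_1u^*,\dots,ua_{n-1}u^*)\,u \,\bigr].
\]
For each fixed $u$, $\psi_u := u^*\psi(u(\cdot)u^*,\dots,u(\cdot)u^*)u$ is a norm-continuous cochain with $\partial\psi_u = \phi_u$, and $\phi_u$ is again a weak$^*$-continuous cocycle since conjugation by any $u \in U(\ell^\infty(X))$ is a weak$^*$-continuous $*$-automorphism of $\mathcal{A}$. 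Iterative application of the inductive hypothesis lets one modify each $\psi_u$ by a weak$^*$-continuous coboundary correction so that, upon integrating against $m$, one arranges $\partial\widetilde\psi = \phi$.

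\textbf{Main obstacle.} The hardest point is verifying that $\widetilde\psi$ is itself weak$^*$-continuous and $\mathcal{A}$-valued rather than taking values in the larger algebra $\mathcal{A}''$ or $\mathcal{A}^{**}$: averaging a norm-continuous cochain against an arbitrary invariant mean need not produce a weak$^*$-continuous output. The plan is to exploit the specific structure of uniform Roe algebras --- the weak$^*$-density of the finitely-supported operators, the $\ell^\infty(X)$-bimodule structure of $\mathcal{A}$, and the central decomposition $\mathcal{A}^{**} \cong \mathcal{A}'' \oplus \mathcal{N}_s$ into normal and singular parts --- to split $\widetilde\psi$ accordingly. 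The singular residue should itself be a weak$^*$-continuous cocycle of degree $n-1$ and hence, by the inductive hypothesis, a weak$^*$-continuous coboundary that can be absorbed, leaving a weak$^*$-continuous $\mathcal{A}$-valued representative with coboundary $\phi$. Keeping track of this decomposition, and in particular making sure the final cochain does not escape $\mathcal{A}$, is where the bulk of the technical work will go.
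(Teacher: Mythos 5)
Your overall skeleton coincides with the paper's: the comparison map $H^n_w(\cu{X})\to H^n_c(\cu{X})$ has vanishing target by hypothesis, so everything reduces to its injectivity, which is established degree by degree using the vanishing one dimension lower together with averaging over the (abelian, hence amenable) unitary group of $\ell^\infty(X)$. However, two of your key steps have genuine gaps. First, the conjugation average $\widetilde\psi = m_u\bigl[u^*\psi(ua_1u^*,\dots,ua_{n-1}u^*)u\bigr]$ does not do what you need: writing $\alpha_u$ for the conjugation action on cochains, one checks that $\partial\alpha_u=\alpha_u\partial$, so $\partial\widetilde\psi = m_u[\alpha_u\phi]$ rather than $\phi$; and modifying each $\psi_u$ by a coboundary cannot repair this, because such modifications do not change $\partial\psi_u$ at all. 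The correct device is the one-sided, recursively defined reduction $K_n$/$Q_n$ of Sinclair and Smith (Lemmas \ref{lemma3.2.4} and \ref{lemma3.2.6}), which satisfies $\partial Q_{n-1}=Q_n\partial$ and replaces $\psi$ by an $\ell^\infty(X)$-multimodular primitive without disturbing the class of $\phi$.

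Second, the ``main obstacle'' you identify is indeed the heart of the matter, but the normal/singular splitting you propose is neither carried out nor quite the right mechanism: the singular residue of a general $(n-1)$-cochain $\widetilde\psi$ is not a cocycle, so the inductive hypothesis (which concerns cocycles that happen to be norm-continuous coboundaries) does not apply to it, and there is no reason it can be ``absorbed.'' The paper's resolution is sharper and makes the residue vanish outright: once the primitive is $\ell^\infty(X)$-multimodular, it is automatically separately ultraweakly continuous (Lemma \ref{containment multi}, using that the central projection $p$ of Lemma \ref{takesaki} lies in $\overline{\pi(\ell^\infty(X))}$, so multimodularity forces $S_{n-1}T_{n-1}\psi=\psi$ while $S_{n-1}T_{n-1}$ projects onto the ultraweakly continuous maps), and it automatically takes values in $\cu{X}$ rather than $\BB(\ell^2(X))$ by a propagation and bounded-geometry argument (Lemma \ref{multimodularequal}). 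Statements and proofs of this strength are what is needed to turn your sketch into a proof; as written, the two decisive claims are asserted rather than established.
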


	The paper is organized as follows. In section \ref{sec2} we define uniform Roe algebras and introduce some of their properties.

	Next, we review a technique to average over amenable groups. While most of this method seems well known, it is essential to the proofs that follow. Thus, we construct it in section \ref{sec3}. We then state a key technical result from Braga and Farah which we `upgrade' to multilinear maps so that it may be applied to Hochschild cohomology. 

	Section \ref{hochcoho} will begin with the definition of the Hochschild complex and Hochschild cohomology as they apply to multilinear maps from a C*-algebra $\A$ to a Banach $\A$-bimodule $\mathcal{V}$. We then review many properties of these cohomologies from Sinclair and Smith's book, \textit{Hochschild cohomology of von Neumann algebras} \cite{sinclair1995hochschild}.

	Lastly, in Section \ref{sec6}, we review the connection between the Hochschild cohomology of ultraweak-weak* continuous multilinear maps and the Hochschild cohomology of norm continuous multilinear maps. We then conclude by showing that if the norm continuous Hochschild cohomology of uniform Roe algebras vanishes in all dimensions then so does the ultraweak-weak* continuous Hochschild cohomology.
	
\subsection*{Acknowledgements}
	
	The author would like to thank several academics for assisting my exploration of Hochschild cohomology: Professor Stuart White who first suggested that I look into Hochschild cohomology while visiting the University of Hawai`i, Professor Roger Smith who assisted in the use of his theory through email, and Professor Ilijas Farah for several useful conversations at the Young Mathematicians in C*-Algebras (YMC*A) conference in Copenhagen 2019. I would also like to thank my advisor Rufus Willett for numerous suggestions and advice. 
	My research was partially funded by the National Science Foundation grants DSM-1564281 and DSM-1901522, for which I am grateful.

\section{\centering Preliminaries}
\label{sec2}
	Inner products are linear in the first variable. For a Hilbert space $\HH$ we denote the space of bounded operators on $\HH$ by $\BB(\HH)$, and the space of compact operators by $\KK(\HH)$.   

	The Hilbert space of square-summable sequences on a set $X$ is denoted $\ell^2(X)$, and the canonical basis of $\ell^2(X)$ will be denoted $(\delta_x)_{x\in X}$.  For $a\in \BB(\ell^2(X))$ we define its matrix entries by
	$$
	a_{xy}:= \lrangle{\delta_x,a\delta_y}.
	$$
\subsection{Uniform Roe Algebras}

	We now give some basic definitions regarding uniform Roe algebras.

	\begin{definition}[propagation, uniform Roe algebra]
	Let $X$ be a metric space and $r\geq 0$.  An operator $a\in \BB(\ell^2(X))$ \emph{has propagation at most $r$} if $a_{xy}=0$ whenever 
	$d(x,y) > r$ for all $(x,y)\in X\times X$.  In this case, we write prop$(a) \leq r$. 
	The set of all operators with propagation at most $r$ is denoted $\cualg{r}{X}$.  We define
	$$
	\cualg{}{X} := \{a\in \BB(\ell^2(X)) : \text{prop}(a) < \infty\};
	$$
	it is not difficult to see that this is a $*$-algebra.  The \textit{uniform Roe algebra}, denoted $C^*_u(X)$, is defined to be the norm closure of $\C_u[X]$ under the norm inherited from $\BB(\ell^2(X))$.
	\end{definition}

	\begin{definition}[$\epsilon$-$r$-\textit{approximated}]
	\label{er-approx}
	Let $X$ be a metric space.
	Given $\epsilon >0$ and $r >0$, 
	an operator $a\in \BB(\ell^2(X))$ can be $\epsilon$-$r$-\textit{approximated} if there exists 
	$b \in \cualg{r}{X}$ such that $\norm{a-b} \leq \epsilon$.
	Note that an operator $a\in\BB(\ell^2(X))$ is in the uniform Roe algebra if and only if for all $\epsilon>0$ there exists an $r$ such that $a$ can be $\epsilon$-$r$-approximated.
	\end{definition}
	
We will be exclusively interested in uniform Roe algebras associated to bounded geometry metric spaces as in the next definition.
	
	\begin{definition}[bounded geometry]\label{bg}
	A metric space $X$ is said to have \textit{bounded geometry} if for every $r\geq 0$ there exists an $N_r \in \N$ 
	such that for all $x\in X$, the ball of radius $r$ about $x$ has at most $N_r$ elements.
	\end{definition}

%%%%%%%%%%%%%%%%%%%%%%%%%%%%%%%%%%%%%%%%%%%%%%%%%%%%%%%%%%%	
	\section{Averaging over Amenable Groups} 
	\label{sec3}

In this section, we summarize some facts we need about averaging operators over an amenable group. 
	
	Let $G$ be a discrete (possibly uncountable) group.  If $A$ is a complex Banach space, we let $\ell^\infty(G,A)$ denote the Banach space of bounded functions from $G$ to $A$ equipped with the supremum norm; in the case $A=\C$, we just write $\ell^\infty(G)$.  We also equip $\ell^\infty(G,A)$ with the right-action of $G$ defined for $a\in \ell^\infty(G,A)$ and $h,g\in G$ by
$$
(ag)(h):=a(hg^{-1}).
$$
If $Z$ is any set, a function $\phi:\ell^\infty(G,A)\to Z$ is \emph{invariant} if $\phi(ag)=\phi(a)$ for all $a\in \ell^\infty(G,A)$ and $g\in G$.  

Recall that $G$ is amenable if there exists an \emph{invariant mean} on $\ell^\infty(G)$, i.e.\ an invariant function $\Phi:\ell^\infty(G)\to \C$ that is also a state. 
Fix an invariant mean $\Phi$ on $\ell^\infty(G)$ and let $B$ be a complex Banach space with dual $B^*$.  We may upgrade an invariant mean on $\ell^\infty(G)$ to an invariant contractive linear map $\ell^\infty(G,B^*)\to B^*$ in the following way. Let $b\in B$, $g\in G$, and $a\in \ell^\infty(G,B^*)$, and write $\langle b,a(g)\rangle$ for the pairing between $b$ and $a(g)$.  
Define a map
$$
\Psi_{b,a}:G \to \C \h \text{by} \h g \mapsto \lrangle{b,a(g)}.
$$
Note that $\abs{\Psi_{b,a}(g)} = \abs{\lrangle{b,a(g)}}\leq \norm{a}_{\ell^{\infty}(G,B^*)}\norm{b}_B$ for all $g\in G$.
Hence, $\Psi_{b,a} \in \ell^{\infty}(G)$ for all $b\in B$ and for all $a\in \ell^{\infty}(G,B^*)$
so that when we  apply $\Phi$ we get a complex number $\Phi(\Psi_{b,a})$.
We now define a map
$$
\Phi_a:B\to \C \h \text{by} \h b \mapsto \Phi(\Psi_{b,a}).
$$
Observe that, since $\Phi$ is a state, 
\begin{equation}
\label{Psicontraction}
\abs{\Phi_a(b)} = \abs{\Phi(\Psi_{b,a})} \leq \norm{\Psi_{b,a}}_{\ell^{\infty}(G)} \leq \norm{a}_{\ell^{\infty}(G,B^*)}\norm{b}_B
\end{equation}
and so $\Phi_a \in B^*$. 
Lastly, we define
$$
\Psi:\ell^{\infty}(G,B^*) \to B^* \h \text{by} \h a \mapsto \Phi_a.
$$

The proof of the next lemma is straightforward and so we leave it for the reader.

\begin{lemma}
\label{avg lem}
With notation as above, the map
$$
\Psi:\ell^{\infty}(G,B^*) \to B^* \h \text{defined by} \h a \mapsto \Phi_a
$$
is uniquely determined by the condition 
\begin{equation}\label{avg}
\lrangle{b,\Psi(a)} = \Phi(\lrangle{b,a(\cdot)})
\end{equation}
for $b\in B$ and $a\in \ell^\infty(G,B^*)$.  It is contractive, linear, invariant, and acts as the identity on constant functions. 

\qed
\end{lemma}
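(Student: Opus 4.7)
The plan is to verify each asserted property directly from the defining identity \eqref{avg} and the corresponding property of the invariant mean $\Phi$. Before starting, I would note that uniqueness is immediate: if some $\Psi'(a)\in B^*$ also satisfies $\langle b,\Psi'(a)\rangle = \Phi(\Psi_{b,a})$ for every $b\in B$, then $\Psi'(a)$ and $\Psi(a)$ agree as functionals on $B$, and hence coincide. Consequently, each remaining claim about $\Psi$ reduces to checking an identity on pairings $\langle b,\cdot\rangle$ for arbitrary $b\in B$, which is the strategy I would use throughout.

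Contractivity is already recorded in the inequality \eqref{Psicontraction}, which yields $\|\Phi_a\|_{B^*}\leq \|a\|_{\ell^\infty(G,B^*)}$ and therefore $\|\Psi\|\leq 1$. For linearity, I would fix $a_1,a_2\in \ell^\infty(G,B^*)$ and scalars $\alpha,\beta\in\C$; by the linearity of the pairing in its second slot, one has $\Psi_{b,\alpha a_1+\beta a_2}=\alpha\Psi_{b,a_1}+\beta\Psi_{b,a_2}$ pointwise on $G$, and then linearity of $\Phi$ as a functional on $\ell^\infty(G)$ gives $\Phi_{\alpha a_1+\beta a_2}(b)=\alpha\Phi_{a_1}(b)+\beta\Phi_{a_2}(b)$. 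The uniqueness observation above then promotes this to an equality in $B^*$.

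For invariance, I would compute, for $a\in \ell^\infty(G,B^*)$, $g\in G$, $b\in B$, and $h\in G$,
$$
\Psi_{b,ag}(h) \;=\; \lrangle{b,(ag)(h)} \;=\; \lrangle{b,a(hg^{-1})} \;=\; \Psi_{b,a}(hg^{-1}) \;=\; (\Psi_{b,a}\, g)(h),
$$
so $\Psi_{b,ag}=\Psi_{b,a}\, g$ as elements of $\ell^\infty(G)$; invariance of $\Phi$ then gives $\Phi_{ag}(b)=\Phi_a(b)$ for every $b$, hence $\Psi(ag)=\Psi(a)$. Lastly, if $a$ is the constant function with value $\beta\in B^*$, then $\Psi_{b,a}$ is the constant function on $G$ with value $\lrangle{b,\beta}$; since $\Phi$ is a state on the unital C*-algebra $\ell^\infty(G)$ it sends constant functions to their values, so $\Phi_a(b)=\lrangle{b,\beta}$ for all $b\in B$ and uniqueness gives $\Psi(a)=\beta$.

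The main thing to keep straight is the bookkeeping between the three objects $\Psi_{b,a}\in\ell^\infty(G)$, $\Phi_a\in B^*$, and $\Psi(a)\in B^*$; once one fixes the convention that every statement about $\Psi$ is checked by pairing against an arbitrary $b\in B$ and translating to a statement about $\Phi$ on $\ell^\infty(G)$, each of the four properties becomes a single short calculation. For this reason I do not expect any serious obstacle, which is consistent with the authors' remark that the lemma is straightforward.
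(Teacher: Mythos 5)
Your proof is correct, and since the paper explicitly leaves this lemma to the reader, your direct verification (uniqueness via pairing against all $b\in B$, contractivity from line \eqref{Psicontraction}, and the pointwise computations for linearity, invariance, and constants) is exactly the intended routine argument. No gaps.
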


Before we conclude with the properties of $\Psi$ we will introduce an action by a C*-algebra $\A$ on $B^*$.
We then `upgrade' this action to an action on $\ell^{\infty}(G,B^*)$ and $B$.
Once this is done we will be able to show that $\Psi$ behaves `like' a conditional expectation.
That is, for $x, y \in \A,~ f \in \ell^{\infty}(G,B^*)$, $\Psi(x\cdot f \cdot y) = x \cdot \Psi(f) \cdot y$.
First, we will need a few definitions and lemmas. Since the next two definitions will be used elsewhere we temporarily change our notation; that is, $B^* = \mathcal{V}$. 

	\begin{definition}[Banach $\mathcal{A}$-bimodule]
	\label{Banach bimodule}
	Let $\mathcal{A}$ be a C*-algebra. We say that $\mathcal{V}$ is a \textit{Banach $\mathcal{A}$-bimodule} if $\mathcal{A}$ acts nondegenerately on $\mathcal{V}$ from both the left and the right and $\mathcal{V}$ has a norm under which it is a Banach space.
	Moreover, the norm on $\mathcal{V}$ satisfies
	$$
	\norm{av}_{\mathcal{V}} \leq \norm{a}_{\mathcal{A}}\norm{v}_{\mathcal{V}} \h \text{and} \h \norm{va}_{\mathcal{V}} \leq \norm{v}_{\mathcal{V}}\norm{a}_{\mathcal{A}} \h \text{for all} \h a\in\mathcal{A},~ v\in\mathcal{V}.
	$$
	\end{definition}
	
	\begin{definition}[Dual module]
	\label{dual module}
	Let $\mathcal{A}$ be a C*-algebra.
	we say that $\mathcal{V}$ is a \textit{dual module} over $\mathcal{A}$ if:
		
	\begin{enumerate}[(i)]
	\item
	$\mathcal{V}$ is a Banach $\mathcal{A}$-bimodule,
	\item
	$\mathcal{V}$ has a pre-dual $\mathcal{V}_{*}$, 
	\item
	and for $x\in \mathcal{V}$ the maps 
	$$
	L_a: x\mapsto a\cdot x \h \text{and} \h R_a: x\mapsto x\cdot a
	$$
	 are weak* continuous for all $a\in \mathcal{A}$.
	 \end{enumerate}
	 \end{definition}
	
	\begin{lemma}
	Let $\A$ be a C*-algebra and suppose that $B^*$ is a dual $\A$-bimodule.
	Then we can make $\ell^{\infty}(G, B^*) $ a Banach $\A$-bimodule via
	$$
	(x\cdot f)(g) := x\cdot f(g), \h \text{and} \h (f\cdot x)(g) := f(g) \cdot x
	$$
	where $f \in \ell^{\infty}(G,B^*),~x\in \A$, and $g \in G$.
	\qed
	\end{lemma}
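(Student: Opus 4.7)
The plan is to verify each of the conditions in Definition \ref{Banach bimodule} directly, exploiting that every operation is defined pointwise in $g \in G$, so each axiom on $\ell^\infty(G,B^*)$ reduces immediately to the corresponding axiom on $B^*$.

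First, I would check that the formulas actually produce elements of $\ell^\infty(G,B^*)$ and yield the required norm bounds. Fix $x\in\A$ and $f\in\ell^\infty(G,B^*)$. For each $g\in G$, using that $B^*$ is a Banach $\A$-bimodule,
$$
\norm{(x\cdot f)(g)}_{B^*} = \norm{x\cdot f(g)}_{B^*} \leq \norm{x}_{\A}\,\norm{f(g)}_{B^*} \leq \norm{x}_{\A}\,\norm{f}_{\ell^\infty(G,B^*)},
$$
so $x\cdot f \in \ell^\infty(G,B^*)$ with $\norm{x\cdot f}_{\ell^\infty(G,B^*)} \leq \norm{x}_{\A}\norm{f}_{\ell^\infty(G,B^*)}$. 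The right action is handled identically, yielding the two norm inequalities in Definition \ref{Banach bimodule}. Completeness of $\ell^\infty(G,B^*)$ as a Banach space is standard (the uniform limit of $B^*$-valued bounded functions is a bounded $B^*$-valued function, since $B^*$ is a Banach space).

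Next, I would verify the algebraic bimodule axioms. For $x,y\in\A$ and $f\in\ell^\infty(G,B^*)$, evaluation at any $g\in G$ gives
$$
((xy)\cdot f)(g) = (xy)\cdot f(g) = x\cdot (y\cdot f(g)) = (x\cdot (y\cdot f))(g),
$$
and similarly $(f\cdot (xy))(g) = ((f\cdot x)\cdot y)(g)$ and $((x\cdot f)\cdot y)(g) = (x\cdot (f\cdot y))(g)$, all of which are consequences of the corresponding identities in $B^*$. Bilinearity in $x$ and $f$ is equally immediate.

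Finally, nondegeneracy. In the intended application $\A = C_u^*(X)$ is unital, and the action of the unit on $B^*$ is the identity, so $1\cdot f = f = f\cdot 1$ pointwise and nondegeneracy follows at once. There is no real obstacle here; the only mildly delicate point would arise in the non-unital case, where one might worry that nondegenerate action of an approximate unit on $B^*$ pointwise does not automatically upgrade to uniform convergence on $G$, but this does not affect the present lemma as stated for the purposes of this paper.
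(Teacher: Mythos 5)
Your proposal is correct: the paper states this lemma with the proof omitted as routine, and your pointwise verification of the norm bounds, the bimodule identities, and nondegeneracy (via the unit of $C_u^*(X)$ in the intended application) is exactly the argument the paper intends the reader to supply.
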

	
	We now upgrade the action of $\A$ on $B^*$, to an action on $B$.
	
	\begin{lemma}
	Let $\A$ be a C*-algebra and
	suppose that $B^*$ is a dual $\A$-bimodule.
	Then we can make $B$ an $\A$-bimodule via actions that satisfy 
	$$
	\lrangle{a \cdot b , b^*} = \lrangle{b, a^*\cdot b^*} \h \text{and}\h \lrangle{b\cdot a, b^*} = \lrangle{b, b^*\cdot a^*} \h\text{where} \h b \in B.
	$$
	\end{lemma}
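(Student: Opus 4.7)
The plan is to define the $\mathcal{A}$-actions on $B$ by dualizing the given actions on $B^*$, using the weak*-continuity hypothesis in Definition \ref{dual module}(iii) to ensure the dualized functionals land in $B$ (rather than in $B^{**}$).

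Fix $a\in\mathcal{A}$ and $b\in B$. Define
$$
\phi_{a,b}: B^* \to \C \h \text{by} \h b^* \mapsto \lrangle{b, a^* \cdot b^*}.
$$
This is clearly linear, and by the bimodule norm inequality in Definition \ref{Banach bimodule}, satisfies $|\phi_{a,b}(b^*)| \leq \norm{a}\norm{b}\norm{b^*}$. Crucially, since $B^*$ is a dual $\mathcal{A}$-bimodule, the map $L_{a^*}: b^*\mapsto a^*\cdot b^*$ is weak* continuous, and therefore the composition $b^* \mapsto \lrangle{b,a^*\cdot b^*}$ is a weak*-continuous linear functional on $B^*$. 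By the standard characterization of the predual, there exists a unique element of $B$, which we denote $a\cdot b$, such that $\lrangle{a\cdot b, b^*} = \phi_{a,b}(b^*)$ for every $b^*\in B^*$. Symmetrically, using weak* continuity of $R_{a^*}$, define $b\cdot a\in B$ as the unique element satisfying $\lrangle{b\cdot a, b^*} = \lrangle{b, b^*\cdot a^*}$.

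Next I would verify the bimodule axioms by appealing to uniqueness of the predual representative. Linearity in each argument is immediate from linearity of $\phi_{a,b}$ in $a$ and in $b$. For associativity of the left action, observe that for $a_1,a_2\in\mathcal{A}$ and any $b^*\in B^*$,
$$
\lrangle{(a_1 a_2)\cdot b, b^*} = \lrangle{b, (a_1 a_2)^*\cdot b^*} = \lrangle{b, a_2^*\cdot(a_1^*\cdot b^*)} = \lrangle{a_2\cdot b, a_1^*\cdot b^*} = \lrangle{a_1\cdot(a_2\cdot b), b^*},
$$
so $(a_1 a_2)\cdot b = a_1\cdot(a_2\cdot b)$ by uniqueness. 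Associativity of the right action and commutation $(a\cdot b)\cdot a' = a\cdot(b\cdot a')$ follow by the same pattern, shunting the stars across and using the corresponding identity on $B^*$. The Banach bimodule norm inequalities $\norm{a\cdot b}_B\leq \norm{a}_{\mathcal{A}}\norm{b}_B$ (and the right-handed version) follow from the computation
$$
\norm{a\cdot b}_B = \sup_{\norm{b^*}\leq 1}\abs{\lrangle{a\cdot b, b^*}} = \sup_{\norm{b^*}\leq 1}\abs{\lrangle{b, a^*\cdot b^*}} \leq \norm{a}\norm{b}.
$$

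The main subtlety I expect is ensuring the dualized functional actually lies in $B$, i.e., that weak*-continuity of $L_{a^*}$ and $R_{a^*}$ produces elements of the predual rather than merely of $B^{**}$; this is exactly where hypothesis (iii) of Definition \ref{dual module} enters, and once it is invoked the rest of the argument is a bookkeeping exercise in adjointness. Nondegeneracy of the new action is not forced by the construction on a general predual and would, if desired, need to be verified separately using weak*-density of $\mathcal{A}\cdot B^*$ in $B^*$ and a Hahn-Banach argument; for the intended application in the following subsection it suffices to have a Banach $\mathcal{A}$-bimodule structure compatible with the dual pairing.
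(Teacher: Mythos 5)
Your proposal is correct and is essentially the paper's own argument: the paper likewise dualizes the maps $L_a$ and $R_a$ with respect to the $\sigma(B^*,B)$ topology and invokes the identification of the weak*-dual of $B^*$ with $B$ (Reed--Simon, Theorem IV.20) to conclude that the dualized maps act on $B$. You simply spell out the predual-representative argument and the verification of the bimodule axioms in more detail than the paper does.
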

	
	\begin{proof}
	First, we dualize $B^*$ with respect to the $\sigma(B^*,B)$ topology which we denote by $B^{*\dagger}$.
	Note that the topology on $B^{*\dagger}$ is the weakest topology that makes the evaluation maps $ \ev_b :b^* \to \C$ continuous.
	Moreover, by \cite{reed2012methods} Theorem IV.20, $B^{*\dagger} \iso B$.
	Thus, dualizing the maps $L_a$ and $R_a$ with respect to the $\sigma(B^*,B)$ topology the maps $L_a^{\dagger}$ and $R_a^{\dagger}$ are maps on $B$ for all $a\in \A$.
	\end{proof}

\begin{lemma}
\label{ce prop}
Let $\A$ be a C*-algebra and
suppose that $B^*$ is a dual $\A$-bimodule. 
Then the averaging operator $\Psi: \ell^{\infty}(G,B^*) \to B^*$ as defined above has the property that
$$
\Psi(a \cdot f) =a \cdot \Psi(f) \h \text{and} \h \Psi(f \cdot a) = \Psi(f) \cdot a
$$
\end{lemma}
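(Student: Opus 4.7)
The plan is to prove both identities directly from the characterizing property of $\Psi$ established in Lemma \ref{avg lem}, namely that $\langle b, \Psi(f) \rangle = \Phi(\langle b, f(\cdot) \rangle)$ for all $b \in B$ and $f \in \ell^\infty(G, B^*)$. Since the pairing between $B$ and $B^* $ separates points of $B^*$, it suffices to test equality of two elements of $B^*$ against an arbitrary $b \in B$, and the $\A$-bimodule structure on $B$ has been set up precisely so that the actions of $\A$ on $B^*$ can be transferred to $B$ via adjoints.

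For the left identity, I would fix $a \in \A$, $f \in \ell^\infty(G, B^*)$, and $b \in B$, and compute
\begin{equation*}
\langle b, \Psi(a \cdot f) \rangle = \Phi\bigl(\langle b, (a \cdot f)(\cdot) \rangle\bigr) = \Phi\bigl(\langle b, a \cdot f(\cdot) \rangle\bigr),
\end{equation*}
using the pointwise definition of the $\A$-action on $\ell^\infty(G, B^*)$. Next I would invoke the adjoint relation $\langle b, a \cdot b^* \rangle = \langle a^* \cdot b, b^* \rangle$ (obtained by substituting $a \mapsto a^*$ into the identity $\langle a \cdot b, b^* \rangle = \langle b, a^* \cdot b^* \rangle$ from the previous lemma) to rewrite the integrand as $\langle a^* \cdot b, f(\cdot) \rangle$. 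Then another application of the defining property of $\Psi$ yields $\Phi(\langle a^* \cdot b, f(\cdot) \rangle) = \langle a^* \cdot b, \Psi(f) \rangle$, and reversing the adjoint step gives $\langle b, a \cdot \Psi(f) \rangle$. Since $b \in B$ was arbitrary, $\Psi(a \cdot f) = a \cdot \Psi(f)$.

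The right identity is entirely symmetric: I would use $\langle b, b^* \cdot a \rangle = \langle b \cdot a^*, b^* \rangle$ (again obtained by substituting $a \mapsto a^*$ into the right-action identity from the previous lemma) to push the action of $a$ across the pairing and conclude by the same separation argument.

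I do not expect a real obstacle here: all the nontrivial work has already been done in setting up the $\A$-bimodule structures and the defining property of $\Psi$. The only point requiring a bit of care is bookkeeping with the adjoints, i.e.\ making sure the left action of $a$ on $B^*$ corresponds to the left action of $a$ on $B$ (not of $a^*$), which is handled by applying the adjoint twice in the course of the calculation. Everything else is routine substitution combined with the linearity and state properties of $\Phi$.
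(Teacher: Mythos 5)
Your proposal is correct and follows essentially the same argument as the paper: test against an arbitrary $b\in B$, use the defining property $\lrangle{b,\Psi(f)}=\Phi(\lrangle{b,f(\cdot)})$, and move the action of $a$ across the pairing via the adjoint relation $\lrangle{a^*\cdot b,b^*}=\lrangle{b,a\cdot b^*}$. The paper simply runs the same chain of equalities in the opposite direction, starting from $\lrangle{b,a\cdot\Psi(f)}$.
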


\begin{proof}
Let $b\in B,~f \in \ell^{\infty}(G,B^*)$, and $a \in \A$.
Observe that
$$
\lrangle{b, a\cdot \Psi(f)} =  \lrangle{a^*\cdot b, \Psi(f)}  =\Phi(\lrangle{a^*\cdot b,  f(\cdot)}) = \Phi(\lrangle{b, (a\cdot f)(\cdot)}) = \lrangle{b, \Psi(a\cdot f)}
$$
with a similar calculation when $\A$ acts on the right.
\end{proof}

We will be using this machinery to average over multilinear maps. Rather then defining new maps for each situation, and since our averaging operator enjoys all of the properties (except for countable additivity) as if we were integrating over a normalized Haar measure, we will use integral notation to denote our averaging operator.
That is, if $\Psi$ is as above for $f\in \ell^{\infty}(G,B^*)$ and $g \in G$ we define 
$$
\Psi(f) =: \int_Gf(g) \dd\mu(g).
$$
Note that, in the non-compact amenable case, $\mu$ is not a measure; it serves only as a notational device.

We will apply this machinery in the case that $B=\mathcal{L}^1(\ell^2(X))$ is the trace class operators on $\ell^2(X)$.  In this case, the dual $B^*$ canonically identifies with $\BB(\ell^2(X))$: indeed, if $\text{Tr}$ is the canonical trace on $\mathcal{L}^1(\ell^2(X))$, $b\in \mathcal{L}^1(\ell^2(X))$, and $a\in \BB(\ell^2(X))$, then the pairing inducing this duality isomorphism is defined by
\begin{equation}\label{pair form}
\langle b,a\rangle:=\text{Tr}(ba).
\end{equation} 

The next lemma says that our averaging process behaves well with respect to propagation.  The main point of the lemma is that the collection of operators in $\BB(\ell^2(X))$ that have propagation at most $r$ is weak-$*$ closed for the weak-$*$ topology inherited from the pairing with $\mathcal{L}^1(\ell^2(X))$.

\begin{lemma}\label{avg prop lem}
With notation as above, if $r\geq 0$ and $f\in \ell^\infty(G,\BB(\ell^2(X)))$ is such that the propagation of each $f(g)$ is at most $r$ for all $g\in G$, then the propagation of $\int_{G}f(g)\dd\mu(g)$ is also at most $r$.
\end{lemma}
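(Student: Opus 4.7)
The plan is to show that the matrix entries of the averaged operator vanish outside the propagation-$r$ diagonal band by using the defining formula \eqref{avg} together with the fact that matrix entries are recovered by pairing with rank-one trace-class operators. The key observation is that the condition ``propagation at most $r$'' is pointwise on matrix entries, i.e. $a_{xy}=0$ whenever $d(x,y)>r$, and each such matrix entry is a continuous linear functional on $\BB(\ell^2(X))$ coming from an element of $\mathcal{L}^1(\ell^2(X))$. Hence the condition is preserved by any weak-$*$-to-weak-$*$ or weak-$*$-to-scalar map that ``averages pointwise'' in the sense of \eqref{avg}.

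More precisely, I would fix $x,y\in X$ with $d(x,y)>r$ and consider the rank-one operator $b_{xy}:=|\delta_y\rangle\langle\delta_x|\in\mathcal{L}^1(\ell^2(X))=B$. Using the duality pairing \eqref{pair form}, a direct computation gives
$$
\lrangle{b_{xy},a}=\Tr(b_{xy}a)=\lrangle{\delta_x,a\delta_y}=a_{xy}
$$
for any $a\in\BB(\ell^2(X))$. Applying this with $a=\Psi(f)=\int_G f(g)\,\dd\mu(g)$ and invoking the characterizing identity \eqref{avg} from Lemma \ref{avg lem}, one obtains
$$
\Psi(f)_{xy}=\lrangle{b_{xy},\Psi(f)}=\Phi\bigl(\lrangle{b_{xy},f(\cdot)}\bigr)=\Phi\bigl(g\mapsto f(g)_{xy}\bigr).
$$

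By hypothesis, $\mathrm{prop}(f(g))\leq r$ for every $g\in G$, so $f(g)_{xy}=0$ for all $g$ because $d(x,y)>r$. Thus the function $g\mapsto f(g)_{xy}$ is identically zero in $\ell^\infty(G)$, and since $\Phi$ is a state (in particular linear) it sends this function to $0$. Therefore $\Psi(f)_{xy}=0$ whenever $d(x,y)>r$, which is exactly the assertion that $\mathrm{prop}(\Psi(f))\leq r$.

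I do not anticipate a genuine obstacle here: the entire argument reduces to the observation that propagation is cut out by a family of weak-$*$-continuous linear functionals (the matrix-coefficient functionals), and the averaging operator $\Psi$ is built to commute with evaluation against elements of the predual $B$. The only place where one must be a little careful is identifying the correct rank-one operator in $\mathcal{L}^1(\ell^2(X))$ whose pairing under \eqref{pair form} produces the $(x,y)$ matrix entry, but this is a one-line trace computation.
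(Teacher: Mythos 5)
Your proposal is correct and follows essentially the same route as the paper: both arguments pair $\Psi(f)$ against the rank-one matrix unit $e_{yx}=|\delta_y\rangle\langle\delta_x|$ in the predual $\mathcal{L}^1(\ell^2(X))$, use the defining identity \eqref{avg} to reduce the $(x,y)$ entry of the average to $\Phi$ applied to the function $g\mapsto f(g)_{xy}$, and conclude from the hypothesis that this function vanishes identically when $d(x,y)>r$. No gaps.
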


\begin{proof}
Let $e_{xy}\in \mathcal{L}^1(\ell^2(X))$ be the standard matrix unit.  Then one computes using line \eqref{pair form} above that for any $a\in \BB(\ell^2(X))$,
\begin{equation}\label{munit}
\langle e_{yx},a\rangle=\text{Tr}(e_{yx}a)=a_{xy}.
\end{equation}
Using lines \eqref{avg} and \eqref{munit}, we see that
$$
\Big\langle e_{yx},\int_{G}f(g)\dd\mu(g)\Big\rangle= \int_{G}\langle e_{yx}, f(g)\rangle\dd\mu(g)=\int_{G}f(g)_{xy}\dd\mu(g),
$$
where the last expression means the image of the function 
$$
G\to \C, \quad g\mapsto f(g)_{xy} 
$$
under the invariant mean.
Now, if $d(x,y)>r$, we have that $f(g)_{xy}=0$ for all $g\in G$, and therefore that $\int_{G}f(g)_{xy}\dd\mu(g)=0$.  Hence, by the above computation, 
$$
d(x,y)>r \quad \text{implies} \quad \Big\langle e_{yx},\int_{G}f(g)\dd\mu(g)\Big\rangle=0.
$$
Using line \eqref{munit}, this says that $\int_{G}f(g)\dd\mu(g)$ has propagation at most $r$, so we are done.
\end{proof}
	
%%%%%%%%%%%%%%%%%%%%%%%%%%%%%%%%%%%
	
\subsection{A Result of Braga and Farah}
Note that in the averaging process from the previous subsection, convergence is happening in the weak-$*$ topology of $\BB(\HH)$. 
However, by Lemma \ref{avg prop lem}, we know that the averaging process behaves well with uniformly finite propagation operators. 
In this subsection, we present a result of Braga and Farah from \cite[Lemma 4.9]{braga2018rigidity} (see Theorem \ref{bf main} below) which will allow us to work with uniformly finite propagation operators.  
This theorem will allow us to uniformly $\epsilon$-$r$-approximate (Definition \ref{er-approx}) $f\in \ell^\infty(\mathcal{U},\BB(\ell^2(X)))$ where $\mathcal{U}$ is the unitary group of $\ell^{\infty}(X)$.
That is,  given $\epsilon>0$, there exists a \textit{single} $r>0$ such that for all $u\in \mathcal{U}$ such that $f(u)\in\BB(\ell^2(X))$ can be $\epsilon$-$r$-approximated.

To state the result, let $\mathbb{D}:=\{z\in \C\mid |z|\leq 1\}$ denote the closed unit disk in the complex plane.  Let $I$ be a countably infinite set, and let $\mathbb{D}^I$ denote as usual the space of all $I$-indexed tuples $\lambda:=(\lambda_i)_{i\in I}$ with each $\lambda_i\in \mathbb{D}$.  We fix this notation throughout this section.

\begin{definition}[symmetrically summable]\label{sss}
A sequence $(a_i)_{i\in I}$ is \emph{symmetrically summable} if for all $\lambda\in \mathbb{D}^I$, the sum $\sum_{i\in I} \lambda_i a_i$ converges in the weak operator topology
to an element of $C^*_u(X)$.  If $(a_i)$ is symmetrically summable and $\lambda=(\lambda_i)$ is in $\mathbb{D}^I$, we write $a_\lambda$ for the operator $\sum_{i\in I} \lambda_i a_i$.   
\end{definition}

\begin{theorem}[Lemma 4.9 \cite{braga2018rigidity}]
\label{bf main}
Let $(a_i)$ be a symmetrically summable collection of operators in $C^*_u(X)$.  Then for any $\epsilon>0$ there exists $r>0$ such that for all $\lambda \in \mathbb{D}^I$, the operator $a_\lambda$ is $\epsilon$-$r$-approximated.
\end{theorem}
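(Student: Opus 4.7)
The plan is a Baire category argument on the compact metrizable space $\D^I$ (compact by Tychonoff, metrizable since $I$ is countable). Fixing $\epsilon > 0$, I would define
$$
F_r := \set{\lambda \in \D^I : a_\lambda \text{ can be } \epsilon\text{-}r\text{-approximated}}
$$
for each $r > 0$. Since each $a_\lambda$ lies in $\cu{X}$ we have $\D^I = \bigcup_{n \in \N} F_n$, so if the $F_r$ are closed in the product topology then Baire yields some $F_{r_0}$ with non-empty interior, and an absorbing argument using basic open sets of $\D^I$ will then upgrade this to a uniform approximation bound for a slightly larger $r$ and a fixed multiple of $\epsilon$.

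The key input for closedness is an $\ell^1$ consequence of symmetric summability: for fixed $\xi,\eta \in \ell^2(X)$, the fact that $\sum_i \lambda_i \lrangle{\xi, a_i \eta}$ converges unconditionally for every $\lambda \in \D^I$ forces $\sum_i \abs{\lrangle{\xi, a_i\eta}} < \infty$, this being the standard fact that a scalar sequence summed against every bounded coefficient system is absolutely summable. Two applications of the uniform boundedness principle then give $M := \sup_{\lambda \in \D^I}\norm{a_\lambda} < \infty$, and the $\ell^1$ bound combined with dominated convergence shows that $\lambda \mapsto a_\lambda$ is WOT-continuous from $\D^I$ into $\BB(\ell^2(X))$. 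Closedness of $F_r$ then follows: given $\lambda^{(n)} \to \lambda$ in $\D^I$ with each $\lambda^{(n)} \in F_r$, choose $b^{(n)}$ of propagation at most $r$ with $\norm{a_{\lambda^{(n)}} - b^{(n)}} \leq \epsilon$; since $\norm{b^{(n)}} \leq M + \epsilon$, WOT-compactness of bounded sets in $\BB(\ell^2(X))$ yields a WOT-convergent subnet $b^{(n_\alpha)} \to b$. The observation from the paragraph preceding Lemma \ref{avg prop lem}, that propagation at most $r$ is WOT-closed, gives that $b$ also has propagation at most $r$; since $a_{\lambda^{(n_\alpha)}} \to a_\lambda$ in WOT and the norm-$\epsilon$ ball is WOT-closed, passing to the WOT limit in $\norm{a_{\lambda^{(n_\alpha)}} - b^{(n_\alpha)}} \leq \epsilon$ gives $\norm{a_\lambda - b} \leq \epsilon$, hence $\lambda \in F_r$.

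The absorbing step is then straightforward: Baire gives a basic open set $U = \set{\mu : \abs{\mu_i - \lambda_{0,i}} < \delta \text{ for } i \in F}$ contained in some $F_{r_0}$, with $F \sub I$ finite. For an arbitrary $\mu \in \D^I$ define $\lambda \in \D^I$ to agree with $\mu$ off $F$ and with $\lambda_0$ on $F$; then $\lambda \in U \sub F_{r_0}$ and $a_\mu - a_\lambda = \sum_{i \in F}(\mu_i - \lambda_{0,i})a_i$ is a finite sum. Since each $a_i$ for $i \in F$ lies in $\cu{X}$, each can individually be approximated in norm by a finite-propagation operator to within $\epsilon/(2\abs{F})$; combining with the $\epsilon$-$r_0$-approximation of $a_\lambda$ yields a $2\epsilon$-$r$-approximation of $a_\mu$, where $r$ is the maximum of $r_0$ and these finitely many propagation bounds. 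Running the whole argument with $\epsilon/2$ in place of $\epsilon$ then gives the statement as written.

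The main obstacle is the closedness of $F_r$: coordinatewise convergence of $\lambda^{(n)}$ a priori gives no useful convergence of $a_{\lambda^{(n)}}$, and the entire plan hinges on extracting the $\ell^1$ behavior of the matrix entries $\lrangle{\xi, a_i \eta}$ out of symmetric summability so that WOT-continuity of $\lambda \mapsto a_\lambda$ can be deployed; once this is in place the other steps are routine.
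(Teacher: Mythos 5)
Your argument is correct, and it shares the paper's overall skeleton --- a Baire category argument on the compact metrizable space $\D^I$ applied to exactly the sets $U_{\epsilon,r}$ of Definition \ref{ureps} --- but the two substantive steps are executed by genuinely different means. For closedness, the paper (in the $N=1$ instance of its multilinear generalization) compresses by finite rank projections $p\in\ell^\infty(X)$ so that $\sum_i \mu_i p a_i p$ becomes norm convergent, invokes the uniform tail estimate of Lemma \ref{bf2}, and finishes with the $\epsilon+3\delta$ device of Lemma \ref{bf1}; you instead extract from symmetric summability the bound $\sum_i\abs{\lrangle{\xi,a_i\eta}}<\infty$ for each fixed $\xi,\eta$ (by plugging in the conjugate-phase coefficient system), deduce $\sup_{\lambda}\norm{a_\lambda}<\infty$ from two applications of uniform boundedness, and conclude that $\lambda\mapsto a_\lambda$ is WOT-continuous by dominated convergence, after which closedness of $F_r$ is nearly formal from WOT-compactness of norm-bounded sets, the WOT-closedness of the propagation-$r$ condition (as noted before Lemma \ref{avg prop lem}), and WOT-lower-semicontinuity of the norm. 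For the exploitation of the interior point, the paper argues by contradiction that the $U_{\epsilon,r}$ would all be nowhere dense, which requires the additional tail Lemma \ref{bf setminus}, whereas you absorb an arbitrary $\mu$ directly into the basic open neighborhood by altering finitely many coordinates, approximating the finite difference $\sum_{i\in F}(\mu_i-\lambda_{0,i})a_i$ term by term and pre-compensating the resulting factor of $2$ by starting from $\epsilon/2$. Your route buys a shorter and more transparent proof of the one-variable statement; the paper's compression-and-tail machinery is heavier but is the form that the author then feeds into the induction establishing the separately symmetrically summable generalization of Theorem \ref{bf corollary}. As a proof of the statement as posed, your argument is complete.
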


The content of the result is the order of quantifiers: the point is that given an $\epsilon>0$ there is an $r>0$ that works for all the $a_\lambda$ at once.  The proof of Theorem \ref{bf main} proceeds via an application of the Baire category theorem to the following sets.  

\begin{definition}\label{ureps}
Say $(a_i)$ is symmetrically summable, and for any $\epsilon,r>0$ define
$$
U_{\epsilon,r}:=\{\lambda\in \mathbb{D}^I \mid a_{\lambda} \text{ can be $\epsilon$-$r$-approximated}\}.
$$
\end{definition}

Note that the hypothesis of Theorem \ref{bf main} says that for any $\epsilon>0$, 
\begin{equation}\label{d union}
\mathbb{D}^I=\bigcup_{r=1}^\infty U_{\epsilon,r},
\end{equation}
while the conclusion of Theorem \ref{bf main} says that for any $\epsilon>0$ there exists $r$ such that $\mathbb{D}^I=U_{\epsilon,r}$.

%%%%%%%%%%%%%%%%%%%%%%%%%%%%%%%%%%%%%%%%%%%%%%%%%%%%%%
\subsection{A Generalization of Braga and Farah's Lemma,\\ Multilinear Version} 

	\begin{definition}[separately symmetrically summable]
	For a finite sequence of countable index sets $\set{I_n}_{n=1}^N,~N<\infty$, a uniformly bounded family of operators $(a_{(i_1, \dots, i_N)})_{(\cl{i}\in \prod_{n=1}^N I_n)}\subseteq \cu{X}$ is $N$ \textit{separately symmetrically summable} if the following condition holds.
	
	For any $(1 \leq k \leq N)$, and for each fixed 
	$$
	\set{\lambda^{(1)},\dots, \lambda^{(k-1)}, \lambda^{(k+1)},\dots, \lambda^{(N)}} \in \prod^N_{\substack{n=1\\n\neq k}}\D^{I_n}
	$$ the sum
	$$
	\sum_{i_{k} \in I_{k}}\lambda^{(k)}_{i_{k}} a_{(\lambda^{(1)}, \dots, \lambda^{(k-1)},i_{k}, \lambda^{(k+1)}, \dots, \lambda^{(N)})}
	$$
	converges in the weak operator topology to an element
	$$
	a_{(\lambda^{(1)}, \dots,\lambda^{(k)}, \dots, \lambda^{(N)})} \in \cu{X}.
	$$
	Additionally, 
	$$
	\f.a. \set{\lambda^{(1)},\dots,  \lambda^{(N)}} \in \prod^N_{n=1}\D^{I_n},
	\h\h
	\sup_{(\lambda^{(1)}, \dots, \lambda^{(N)})}\norm{a_{(\lambda^{(1)}, \dots, \lambda^{(N)})}} < \infty.
	$$

	Note that, if $(a_{(i_1, \dots, i_{N+1})})_{(\cl{i}\in \prod_{n=1}^{N+1} I_n)}$ is $(N+1)$ separately symmetrically summable, then for any fixed $\eta \in \D^{I_{N+1}}$, $(a_{(i_1, \dots, i_{N},\eta)})_{(\cl{i}\in \prod_{n=1}^N I_n)}$  is N separately symmetrically summable.
	\end{definition}
	
	We are now ready to generalize Braga and Farah's Lemma.
	
	\begin{theorem}
	\label{bf corollary}
	
	Suppose that $$(a_{(i_1, \dots, i_N)})_{(\cl{i}\in \prod_{n=1}^{N} I_n)}\subseteq \cu{X}$$ is $N$ separately symmetrically summable.
	Then for any $\epsilon >0$ there exists an $r >0$ such that for all $(\lambda^{(1)}, \dots, \lambda^{(N)})  \in \prod_{n=1}^N\D^{I_n}$, the operator
	$
	a_{(\lambda^{(1)}, \dots, \lambda^{(N)})}
	$
	is $\epsilon$-$r$-approximated.
	\end{theorem}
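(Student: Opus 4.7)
I would prove this by induction on $N$. The base case $N=1$ is exactly Theorem \ref{bf main}. For the inductive step, assume the result for $N$ and let $(a_{(i_1,\dots,i_{N+1})})$ be $(N+1)$-separately symmetrically summable. Fix $\epsilon>0$. The remark at the end of the definition says that for each $\eta\in\mathbb{D}^{I_{N+1}}$ the slice $(a_{(i_1,\dots,i_N,\eta)})$ is $N$-separately symmetrically summable, so by induction there is some $r_\eta$ that $(\epsilon/2)$-$r_\eta$-approximates $a_{(\lambda^{(1)},\dots,\lambda^{(N)},\eta)}$ uniformly in $(\lambda^{(1)},\dots,\lambda^{(N)})$. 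The point is to remove the dependence of $r_\eta$ on $\eta$, which I would accomplish by rerunning the Baire argument from the proof of Theorem \ref{bf main} on $\mathbb{D}^{I_{N+1}}$.

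Define
$$
V_r := \{\eta\in\mathbb{D}^{I_{N+1}}:a_{(\lambda^{(1)},\dots,\lambda^{(N)},\eta)}\text{ is }(\epsilon/2)\text{-}r\text{-approximated for all }(\lambda^{(1)},\dots,\lambda^{(N)})\}.
$$
Then $\mathbb{D}^{I_{N+1}}=\bigcup_r V_r$, and $\mathbb{D}^{I_{N+1}}$ with the product topology is a compact metrizable, hence Baire, space. Each $V_r$ is closed: if $\eta_k\to\eta$ pointwise, then for every fixed $(\lambda^{(1)},\dots,\lambda^{(N)})$, symmetric summability of the slice $(a_{(\lambda^{(1)},\dots,\lambda^{(N)},i_{N+1})})_{i_{N+1}}$ (which forces absolute summability of its matrix entries against any $\xi,\zeta\in\ell^2(X)$) yields WOT-convergence $a_{(\lambda^{(1)},\dots,\lambda^{(N)},\eta_k)}\to a_{(\lambda^{(1)},\dots,\lambda^{(N)},\eta)}$, and the set of $(\epsilon/2)$-$r$-approximated operators is WOT-closed by the same reasoning as in Lemma \ref{avg prop lem} combined with WOT-closedness of the norm ball of radius $\epsilon/2$ in $\BB(\ell^2(X))$. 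Baire then produces some $V_{r_0}$ whose interior contains a basic open set $\{\eta:|\eta_i-\eta_{0,i}|<\delta,\,i\in F\}$ for some finite $F\subseteq I_{N+1}$ and $\eta_0\in V_{r_0}$.

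The last step converts non-empty interior into a uniform bound by the affine splitting used in Braga and Farah's argument. Given arbitrary $\eta\in\mathbb{D}^{I_{N+1}}$, write $\eta=\tilde\eta+\eta'$ where $\tilde\eta_i=\eta_{0,i}$ for $i\in F$ and $\tilde\eta_i=\eta_i$ for $i\notin F$ (so $\tilde\eta$ lies in the chosen basic open set, hence in $V_{r_0}$), while $\eta'$ is supported on $F$ with $|\eta'_i|\leq 2$. Linearity in the last coordinate gives
$$
a_{(\lambda^{(1)},\dots,\lambda^{(N)},\eta)}=a_{(\lambda^{(1)},\dots,\lambda^{(N)},\tilde\eta)}+\sum_{i\in F}\eta'_i\,a_{(\lambda^{(1)},\dots,\lambda^{(N)},i)}.
$$
For each of the finitely many $i\in F$ the family $(a_{(i_1,\dots,i_N,i)})$ is $N$-separately symmetrically summable, so by induction there is an $r_i$ uniformly $\epsilon/(4|F|)$-$r_i$-approximating $a_{(\lambda^{(1)},\dots,\lambda^{(N)},i)}$. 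Summing the errors with $|\eta'_i|\leq 2$ then yields an $\epsilon$-$\max(r_0,\max_{i\in F}r_i)$-approximation of $a_{(\lambda^{(1)},\dots,\lambda^{(N)},\eta)}$ uniformly over $\prod_{n=1}^{N+1}\mathbb{D}^{I_n}$. I expect the main technical friction to be in verifying the WOT-continuity needed for closedness of $V_r$, and in the bookkeeping of constants in the final affine step, both of which are essentially inherited from Braga and Farah's proof of Theorem \ref{bf main}; the only genuinely new ingredient is the initial inductive reduction.
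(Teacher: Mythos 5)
Your proof is correct, and its skeleton --- induction on $N$ with base case Theorem \ref{bf main}, then a Baire category argument on $\D^{I_{N+1}}$ applied to the sets of $\eta$ whose slices are uniformly approximable, then an affine splitting of an arbitrary $\eta$ into a part lying in a basic open subset of some $V_{r_0}$ and a finitely supported correction --- is the same as the paper's. The differences lie in how two steps are executed, and in both cases your route is leaner. For closedness of the $V_r$, the paper (Lemma \ref{N2 bf4}) compresses by finite rank projections in $\ell^\infty(X)$ to turn weak convergence of the defining series into norm convergence, invokes a uniform tail estimate (Lemma \ref{bf2}), and estimates term by term; you instead note that symmetric summability forces absolute summability of the pairings $\langle \xi, a_{(\lambda,i)}\zeta\rangle$, so dominated convergence gives $a_{(\lambda,\eta_k)}\to a_{(\lambda,\eta)}$ in the weak operator topology, and you then use that the set of $\epsilon$-$r$-approximable operators is WOT-closed (a fact the paper also establishes, inside the proof of Lemma \ref{bf1}, which is the more apt citation than Lemma \ref{avg prop lem}). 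For the endgame, the paper argues by contradiction, first manufacturing (Lemma \ref{bf setminus}) a tail $\sum_{i\in I_N\setminus F}\mu_i a_{\lambda,i}$ that cannot be approximated and then showing the sets $U_{\epsilon,r}$ are nowhere dense; you run the same finite/cofinite splitting forwards, converting the nonempty interior supplied by Baire directly into the uniform conclusion, which dispenses with Lemma \ref{bf setminus} entirely. One point to make explicit in a write-up: the uniform-in-$\lambda$ approximability of each fixed-index slice $a_{(\lambda^{(1)},\dots,\lambda^{(N)},i)}$ for $i\in F$ comes from applying the inductive hypothesis to the $N$ separately symmetrically summable family obtained by fixing $\eta=\delta_i$; this is the right justification, and is in fact more careful than the paper's corresponding Lemma \ref{finite sum}, whose claimed uniformity in $\lambda$ is not obviously delivered by the proof given there.
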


	To prove this theorem we induct on $N$. 
	However, we will need a few lemmas and a definition first.
	Note that the base case is handled by Theorem \ref{bf main}.

\begin{lemma}
\label{bf1}
\begin{enumerate}[(i)]
\item If $a$ is a bounded operator on $\ell^2(X)$ such that for all finite rank projections $p$ in $\ell^\infty(X)$ the product $pap$ can be $\epsilon$-$r$-approximated, then $a$ itself can be $\epsilon$-$r$-approximated. 
\item Say $a$ is a bounded operator on $\ell^2(X)$ and $\epsilon,r>0$ are such that for all $\delta>0$, $a$ can be $(\epsilon+\delta)$-$r$-approximated.  Then $a$ can be $\epsilon$-$r$-approximated.
\end{enumerate}
\end{lemma}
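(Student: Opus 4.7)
The plan is to prove both parts by a weak-$*$ compactness argument, exploiting the fact (noted just before Lemma \ref{avg prop lem}) that the set of operators in $\BB(\ell^2(X))$ of propagation at most $r$ is weak-$*$ closed for the pairing with $\mathcal{L}^1(\ell^2(X))$.

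For part (ii), I would pick a sequence $\delta_n \downarrow 0$ and, for each $n$, choose $b_n \in \BB(\ell^2(X))$ with $\text{prop}(b_n) \leq r$ and $\|a - b_n\| \leq \epsilon + \delta_n$. The $b_n$ are uniformly norm-bounded, so Banach--Alaoglu yields a subnet converging weak-$*$ to some $b$. Weak-$*$ closedness of the propagation condition gives $\text{prop}(b) \leq r$, while weak-$*$ lower semicontinuity of the operator norm gives $\|a - b\| \leq \liminf \|a - b_n\| \leq \epsilon$.

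For part (i), I would let $(p_\alpha)$ be the net of finite rank projections in $\ell^\infty(X)$ (equivalently, indicators of finite subsets of $X$), directed by inclusion; then $p_\alpha \to 1$ strongly, and a routine estimate gives $p_\alpha a p_\alpha \to a$ in SOT, hence in weak-$*$ (since the net is norm-bounded and SOT agrees with the ultraweak topology on bounded sets). For each $\alpha$, use the hypothesis to pick $c_\alpha$ with $\text{prop}(c_\alpha) \leq r$ and $\|p_\alpha a p_\alpha - c_\alpha\| \leq \epsilon$; the bound $\|c_\alpha\| \leq \|a\| + \epsilon$ lets me extract a weak-$*$ convergent subnet $c_{\alpha_\beta} \to c$ (with $\text{prop}(c) \leq r$ by the same closedness argument). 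Passing to the weak-$*$ limit along the subnet, $p_{\alpha_\beta} a p_{\alpha_\beta} - c_{\alpha_\beta} \to a - c$ weak-$*$, and weak-$*$ lower semicontinuity of the norm yields $\|a - c\| \leq \epsilon$.

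The main obstacle is really just keeping the correct topology in view: the whole argument hinges on having one topology (weak-$*$ from $\mathcal{L}^1$) that simultaneously (a) admits a compactness theorem on norm-bounded sets, (b) preserves the bound on propagation, (c) makes the operator norm lower semicontinuous, and (d) contains the approximation $p_\alpha a p_\alpha \to a$. Once one recognizes that the ultraweak topology does all four things at once, both parts fall out uniformly from the same template.
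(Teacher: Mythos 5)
Your proposal is correct and follows essentially the same route as the paper: both parts are proved by extracting a convergent subnet of approximants from a norm-bounded family, using that propagation at most $r$ is preserved in the limit and that the norm is lower semicontinuous. The only cosmetic difference is that you work in the weak-$*$ topology while the paper uses the weak operator topology; these coincide on norm-bounded sets, so the arguments are interchangeable.
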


\begin{proof}
\begin{enumerate}[(i)]
\item Let $J$ be the net of all finite rank projections in $\ell^\infty(X)$, equipped with the usual operator ordering.  For each $p\in J$, choose $b_p\in \C_u^r[X]$ such that $\|pap-b_p\|\leq \epsilon$.  Then the net $(b_p)_{p\in J}$ is norm bounded, so has a weak operator topology convergent subnet, say $(b_p)_{p\in J'}$, converging to some bounded operator $b$ on $\ell^2(X)$.  Note moreover that $\lim_{p\in J'}p$ equals the identity in the weak operator topology, and so $\lim_{p\in J'}pap=a$ and $\lim_{p\in J'}(pap-b_p)=a-b$ in the weak operator topology.

\hspace{.15in}Now, as weak operator topology limits do not increase norms, we see that 
$$
\|a-b\|\leq \limsup_{p\in J'}\|pap-b_p\|\leq \epsilon.
$$

Hence to complete the proof, it suffices to show that $b$ is in fact in $\C_u^r[X]$.  Indeed, for each $(x,y)\in X\times X$, the function taking a bounded operator $c$ on $\ell^2(X)$ to its matrix entry $c_{xy}$ is weak operator topology continuous.  
Hence, if $d(x,y)>r$ then
$$
b_{xy}=\lim_{p\in J'}\big((b_p)_{xy}\big)=0 \h \text{and so} \h b\in \cualg{r}{X}.
$$

\item For each $n$, let $b_n\in \C_u^r[X]$ be such that $\|a-b_n\|\leq \epsilon+1/n$.  As in the previous part, there is a subnet $(b_{n_j})_{j\in J}$ of the sequence $(b_n)$ that converges to some $b\in \C_u^r[X]$ in the weak operator topology.  As weak operator topology limits cannot increase norms, we see that 
$$
\|a-b\|\leq \limsup_{j\in J} \|a-b_{n_j}\|\leq \limsup_{j\in J} (\epsilon+1/n_j)=\epsilon,
$$  
which shows that $a$ can be $\epsilon$-$r$-approximated as claimed. \qedhere
\end{enumerate}

\end{proof}

\begin{lemma}\label{bf2}
Say $(x_i)_{i\in I}$ is a collection in a Banach space such that $\sum_i \lambda_i x_i$ converges in norm for all $(\lambda_i)\in \mathbb{D}^I$.  Then for any $\delta>0$ there exists a finite subset $F$ of $I$ such that for all $(\lambda_i)\in \mathbb{D}^I$
$$
\Bigg\|\sum_{i\in I\setminus F} \lambda_i x_i\Bigg\|<\delta.
$$ 
\end{lemma}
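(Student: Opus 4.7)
The plan is to argue by contradiction. Suppose the conclusion fails, so there is a $\delta>0$ such that for every finite subset $F\subseteq I$ one can exhibit some $\lambda\in \mathbb{D}^I$ with $\bigl\|\sum_{i\in I\setminus F}\lambda_i x_i\bigr\|\geq \delta$. I will use this to inductively extract pairwise disjoint finite subsets $F_1,F_2,\ldots$ of $I$ together with coefficients supported on $\bigsqcup_k F_k$, and then assemble them into a single choice of $\mu\in \mathbb{D}^I$ for which $\sum_i \mu_i x_i$ is not norm-Cauchy, contradicting the hypothesis.

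The inductive step goes as follows. Set $G_0:=\emptyset$ and suppose pairwise disjoint finite sets $F_1,\ldots,F_{k-1}$ have been constructed; put $G_{k-1}:=F_1\cup\cdots\cup F_{k-1}$. By the assumed failure of the lemma applied to $F=G_{k-1}$, pick $\lambda^{(k)}\in\mathbb{D}^I$ with
$$
\Bigl\|\sum_{i\in I\setminus G_{k-1}}\lambda^{(k)}_i x_i\Bigr\|\geq \delta.
$$
The hypothesis of the lemma says that this latter sum converges in norm, so we can choose a finite set $F_k\subseteq I\setminus G_{k-1}$ large enough that the partial sum over $F_k$ is within $\delta/2$ of the total; by the reverse triangle inequality,
$$
\Bigl\|\sum_{i\in F_k}\lambda^{(k)}_i x_i\Bigr\|\geq \delta/2.
$$
Note that $F_k$ is automatically disjoint from $G_{k-1}$, so the induction continues.

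Now define $\mu\in \mathbb{D}^I$ by $\mu_i:=\lambda^{(k)}_i$ for $i\in F_k$ (for the unique $k$ with $i\in F_k$, if any), and $\mu_i:=0$ otherwise. Then $|\mu_i|\leq 1$, so by hypothesis $\sum_{i\in I}\mu_i x_i$ converges in norm; in particular the net of finite partial sums is Cauchy. But the partial sum over $G_k$ minus the partial sum over $G_{k-1}$ equals $\sum_{i\in F_k}\mu_i x_i=\sum_{i\in F_k}\lambda^{(k)}_i x_i$, which has norm at least $\delta/2$ for every $k$. This contradicts Cauchyness and completes the proof.

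The only subtle point is getting the constants in the inductive step to line up, which is why we pass from $\delta$ to $\delta/2$ via the truncation step; otherwise everything is just the standard "block extraction" technique for unconditional convergence.
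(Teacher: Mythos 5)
Your argument is correct and is essentially the paper's own proof: both proceed by contradiction, extract pairwise disjoint finite blocks on which the partial sum has norm at least $\delta/2$ (via the same truncate-within-$\delta/2$ step), splice the coefficients into a single element of $\mathbb{D}^I$, and contradict the Cauchy property of the resulting norm-convergent sum. The only cosmetic difference is that the paper first identifies $I$ with $\N$ and takes its blocks to be intervals $(N_m,M_m]$, which makes the final Cauchy contradiction immediate, whereas with your general finite sets $F_k$ one should add the one-line observation that a fixed finite set can meet only finitely many of the pairwise disjoint $F_k$, so the blocks eventually avoid it.
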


\begin{proof}
For notational convenience, identify $I$ with $\N$, so we are just dealing with a sequence $(x_n)$.  Assume for contradiction that there exists $\delta>0$ such that for all $N$ there exists $(\lambda_n)\in \mathbb{D}^\N$ such that 
$$
\Bigg\|\sum_{n>N} \lambda_n x_n\Bigg\|\geq \delta.
$$

We will inductively define sequences $(\lambda^{(m)})_{m=1}^\infty$ of points in $\mathbb{D}^\N$ and \\$N_1<M_1<N_2<M_2<\cdots$ of natural numbers such that for all $m$, 
$$
\Bigg\|\sum_{n=N_m+1}^{M_m} \lambda^{(m)}_n x_n\Bigg\|\geq \delta /2.
$$ 
Indeed, let $m=1$, and let $N_1$ and $\lambda^{(1)}$ be such that 
$$
\Bigg\|\sum_{n>N_1} \lambda_n^{(1)} x_n\Bigg\|\geq \delta.
$$
As $\sum_{n>N_1} \lambda_n^{(1)} x_n$ is norm convergent, there exists $M_1>N_1$ such that 
$$
\Bigg\|\sum_{n>M_1} \lambda_n^{(1)} x_n\Bigg\|\leq  \delta/2
$$
(such exists by our convergence assumption).  Now, having chosen $N_1<M_1<N_2<\cdots<M_m$, let us choose $N_{m+1}>M_{m}$ and $(\lambda)^{(m+1)}$ so that 
$$
\Bigg\|\sum_{n>N_{m+1}} \lambda_n^{(m+1)} x_n\Bigg\|\geq \delta,
$$
and choose $M_{m+1}>N_{m+1}$ such that 
$$
\Bigg\|\sum_{n>M_{m+1}} \lambda_n^{(m+1)} x_n\Bigg\|\leq  \delta/2.
$$
Then the constructed sequences have the desired properties.

Now, define a new sequence $\lambda\in \mathbb{D}^\N$ by the formula 
$$
\lambda_n:=\left\{\begin{array}{ll} \lambda^{(m)}_n, & N_m<n\leq M_m \\ 0, & \text{otherwise}. \end{array}\right.
$$
Then $\sum_{n=1}^\infty \lambda_n x_n$ converges in norm.  In particular, it is Cauchy.  This implies that for all suitably large $m$, $\|\sum_{n=N_m+1}^{M_m} \lambda_n x_n \|<\delta/2$, which contradicts the properties of our construction. 
\end{proof}
	
	\begin{definition}
	\label{N2 r sets}
	Suppose that $(a_{(i_1, \dots, i_N)})_{(\cl{i}\in \prod_{n=1}^{N} I_n)}\subseteq \cu{X}$ is $N$ separately symmetrically summable.
	Let $\lambda = (\lambda^{(1)}, \dots, \lambda^{(N-1)})$. Then for $\eta \in \D^{I_N}$ we let 
	$$
	a_{\lambda, \eta} = \sum_{i_N \in I_N} \eta_{i_N} a_{\lambda, i_N}
	$$
	Then for $\epsilon,r>0$
	define
	$$
	U_{\epsilon,r} := \set{\eta \in \D^{I_N} \mid a_{\lambda,\eta} \text{ is } \epsilon\text{-}r\text{-approximated for all }\lambda\in\prod_{n=1}^{N-1}\D^{I_n}}.
	$$
	\end{definition}
	
	\begin{remark}
	\label{rem ref}
	On the first read it may provide intuition to just consider the $N =2$ case since the proof of the inductive step is only notationally different.
	
	Suppose that $\epsilon>0$ is given.
	If we are considering the $N=2$ case, and $\set{a_{i,j}}_{i\in I, j\in J}$ is 2 separately symmetrically summable. 
	Then, for each fixed $\eta \in \D^J$, $\set{a_{i,\eta}}_{i\in I}$ is symmetrically summable so by Theorem \ref{bf main} we may write $\D^J$ as the union in line (\ref{N2 union}).
	
	For the inductive step, suppose that $(a_{(i_1, \dots, i_N)})_{(\cl{i}\in \prod_{n=1}^{N} I_n)}$ is $N$ separately symmetrically summable.
	Then, for each fixed $\eta \in \D^{I_N}$ we have that \\
	$(a_{(i_1, \dots, i_{N-1},\eta)})_{(\cl{i}\in \prod_{n=1}^{N-1} I_n)}$ is $(N-1)$ 
	separately symmetrically summable.
	Thus, by inductive hypothesis we may write $\D^{I_N}$ as the union
	
	\begin{equation}
	\label{N2 union}
	\D^{I_N} = \bigcup_{r=1}^{\infty} U_{\epsilon,r}.
	\end{equation}
		
	We will first show that the sets in Definition \ref{N2 r sets} are closed for any $N$ separately symmetrically summable $(a_{(i_1, \dots, i_N)})_{(\cl{i}\in \prod_{n=1}^{N} I_n)}$.  Then we will show that if $(a_{(i_1, \dots, i_N)})_{(\cl{i}\in \prod_{n=1}^{N} I_n)}$ does not satisfy the conclusion of Theorem \ref{bf corollary}, there is $\epsilon>0$ such that for all $r>0$, $U_{r,\epsilon}$ is nowhere dense in $\mathbb{D}^{I_N}$.  As we have the union in line (\ref{N2 union}), this contradicts the Baire category theorem and we will be done.
	\end{remark}

	\begin{lemma}
	\label{N2 bf4}
	Suppose that $(a_{(i_1, \dots, i_N)})_{(\cl{i}\in \prod_{n=1}^{N} I_n)}$ is separately symmetrically summable. 
	Let $\lambda = (\lambda^{(1)}, \dots, \lambda^{(N-1)})$.
	Then for any $\epsilon,r>0$ the set $U_{\epsilon,r}$ of Definition \ref{N2 r sets} is closed.
	\end{lemma}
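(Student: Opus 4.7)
The plan is to prove that if a sequence $(\eta^{(k)})$ in $U_{\epsilon,r}$ converges pointwise to some $\eta\in\D^{I_N}$, then $\eta\in U_{\epsilon,r}$; since $I_N$ is countable, $\D^{I_N}$ is metrizable and sequential closedness suffices. Fix such a convergent sequence and an arbitrary $\lambda=(\lambda^{(1)},\dots,\lambda^{(N-1)})\in\prod_{n=1}^{N-1}\D^{I_n}$. The task is to $\epsilon$-$r$-approximate $a_{\lambda,\eta}$.

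Rather than approximate $a_{\lambda,\eta}$ itself, I will use Lemma \ref{bf1}(i) to reduce the problem to $\epsilon$-$r$-approximating the compression $pa_{\lambda,\eta}p$ for each finite rank projection $p\in\ell^\infty(X)$. The advantage is that $p\BB(\ell^2(X))p$ is finite-dimensional, so the weak operator and norm topologies coincide on it. Consequently the WOT-convergent series $\sum_{i_N}\mu_{i_N}a_{\lambda,i_N}$ becomes norm-convergent after compression by $p$, and Lemma \ref{bf2} applies to the family $(pa_{\lambda,i_N}p)_{i_N\in I_N}$: given $\delta>0$, there is a finite $F\subset I_N$ such that the tail $\sum_{i_N\notin F}\mu_{i_N}\, pa_{\lambda,i_N}p$ has norm less than $\delta$ uniformly in $\mu\in\D^{I_N}$. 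Applying this with $\mu_{i_N}=(\eta^{(k)}_{i_N}-\eta_{i_N})/2\in\D$, and noting that the partial sum over $F$ tends to zero in norm because $F$ is finite and $\eta^{(k)}_{i_N}\to\eta_{i_N}$ for each $i_N\in F$, I obtain $pa_{\lambda,\eta^{(k)}}p\to pa_{\lambda,\eta}p$ in norm.

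For each $k$, pick $b_k\in\cualg{r}{X}$ with $\norm{a_{\lambda,\eta^{(k)}}-b_k}\leq\epsilon$. Since $p$ is diagonal, $pb_kp$ still has propagation at most $r$, and $\norm{pa_{\lambda,\eta^{(k)}}p-pb_kp}\leq\epsilon$. The sequence $(pb_kp)$ is uniformly bounded (by the sup bound on $\norm{a_{\lambda,\eta^{(k)}}}$ supplied by $N$ separate symmetric summability, plus $\epsilon$) and lives in the finite-dimensional space $p\BB(\ell^2(X))p$, so a subsequence $pb_{k_j}p$ converges in norm to some $c\in p\BB(\ell^2(X))p$. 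The propagation-$\leq r$ condition is characterized by the vanishing of individual matrix entries, which passes to norm limits, so $\mathrm{prop}(c)\leq r$. Combining the norm convergences $pa_{\lambda,\eta^{(k_j)}}p\to pa_{\lambda,\eta}p$ and $pb_{k_j}p\to c$ in the estimate $\norm{pa_{\lambda,\eta^{(k_j)}}p-pb_{k_j}p}\leq\epsilon$ yields $\norm{pa_{\lambda,\eta}p-c}\leq\epsilon$. Hence $pa_{\lambda,\eta}p$ is $\epsilon$-$r$-approximated, and Lemma \ref{bf1}(i) finishes the argument.

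The main obstacle is the middle step: lifting pointwise convergence of the $\eta^{(k)}$ to norm convergence of the compressed operators $pa_{\lambda,\eta^{(k)}}p$. This is where Lemma \ref{bf2} does the essential work, supplying a tail bound uniform in the coefficient vector $\mu$; without such uniformity one could only conclude weak operator convergence on the uncompressed side, which would be insufficient for controlling $\epsilon$-$r$-approximability. The roles of $p$ (to force WOT $=$ norm in a finite-dimensional corner) and of Lemma \ref{bf2} (to obtain a $\mu$-uniform tail estimate) are complementary and both indispensable.
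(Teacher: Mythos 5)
Your argument is correct and follows essentially the same route as the paper: reduce to the compressions $pa_{\lambda,\eta}p$ via Lemma \ref{bf1}(i), use finite rank of $p$ to turn the weakly convergent series into a norm-convergent one, and invoke Lemma \ref{bf2} for the $\mu$-uniform tail bound before comparing with nearby points of $U_{\epsilon,r}$. The only cosmetic differences are that you argue directly with sequences (justified, since $\D^{I_N}$ is metrizable) rather than by contradiction, and you replace the paper's final appeal to Lemma \ref{bf1}(ii) by extracting a norm-convergent subsequence of the approximants $pb_kp$ in the finite-dimensional corner.
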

	\begin{proof}
Assume for contradiction that for some $\epsilon,r>0$, $U_{\epsilon,r}$ is not closed.  
Then there exists some $\eta\in \overline{U_{\epsilon,r}}\setminus U_{\epsilon,r}$.  
As $\eta\not\in U_{\epsilon,r}$, there exists a $\lambda \in \prod_{n=1}^{N-1} \D^{I_n}$ such that $a_{\lambda,\eta}$ cannot be $\epsilon$-$r$-approximated.  
Fix this $\lambda$.

Using (the contrapositive of) Lemma \ref{bf1}, part (i), there exists a finite rank projection $p\in \ell^\infty(X)$ such that $pa_{\lambda,\eta} p$ cannot be $\epsilon$-$r$-approximated.  

Now, for any $\mu\in \mathbb{D}^{I_N}$, the sum $\sum_{i\in I_N}\mu_ia_{\lambda,i}$ defining $a_{\lambda,\mu}$ is weakly convergent. 
As $p$ is finite rank, this implies that the sum $\sum_{i\in I_N}p \mu_i a_{\lambda,i} p$ is norm convergent.  Hence, using Lemma \ref{bf2}, for any $\delta>0$ there exists a finite subset $F$ of $I_N$ such that 
\begin{equation}\label{N2 db1}
\Bigg\|\sum_{i\in I_N\setminus F} p\mu_i a_{\lambda,i}p\Bigg\|<\delta
\end{equation}
for all $\mu\in \mathbb{D}^{I_N}$ (and in particular for $\mu=\eta$).

As $F$ is finite, the set
\begin{equation}\label{N2 db2}
\Bigg\{\mu\in \mathbb{D}^{I_N} ~\Big|~ |F|\max_{i\in F} \|a_{\lambda,i}\||\mu_i-\eta_i|<\delta \text{ for all } i\in F \Bigg\}
\end{equation}
is an open neighborhood of $\eta$ for the product topology. 
As $\eta$ is in the closure of $U_{\epsilon,r}$, the set in line \eqref{N2 db2} thus contains some $\theta\in U_{\epsilon,r}$. 
Hence in particular $pa_{\lambda,\theta} p$ is $\epsilon$-$r$-approximated, so there is $b\in \C^r_u[X]$ such that $\|pa_{\lambda,\theta} p -b\|\leq \epsilon$.

Note that 
$$
\|pa_{\lambda,\eta} p -b\|  \leq \|pa_{\lambda,\theta} p-b\|+\|pa_{\lambda,\eta} p -pa_{\lambda,\theta} p\| 
$$
$$
\leq \|pa_{\lambda,\theta} p-b\|+ \Bigg\|\sum_{i\in F} (\eta_i-\theta_i)pa_{\lambda,i} p\Bigg\|+\Bigg\|\sum_{i\in I_N\setminus F} \theta_i pa_{\lambda,i} p \Bigg\|+\Bigg\|\sum_{i\in I_N\setminus F} \eta_i pa_{\lambda,i} p \Bigg\|.
$$
The first term on the bottom line is bounded above by $\epsilon$ by choice of $b$, the second is bounded above by $\delta$ using that $\theta$ is in the set in line \eqref{N2 db2}, and the third and fourth terms are bounded above by $\delta$ using the estimate in line \eqref{N2 db1} (which is valid for all elements $\eta$ of $\mathbb{D}^{I_N}$).

Now, we have shown that for arbitrary $\delta>0$, we have found $b\in \C_u^r[X]$ such that $\|pa_{\lambda,\eta} p -b\|\leq \epsilon+3\delta$.  Using Lemma \ref{bf1}, part (ii), this implies that $pa_{\lambda,\eta} p$ can be $\epsilon$-$r$-approximated.  This contradicts our assumption in the first paragraph, so we are done.
\end{proof}

	\begin{lemma}
	\label{finite sum}
	Suppose that $(a_{(i_1, \dots, i_N)})_{(\cl{i}\in \prod_{n=1}^{N} I_n)}\subseteq \cu{X}$ is separately symmetrically summable.
	Let $\lambda = (\lambda^{(1)}, \dots, \lambda^{(N-1)})$.
	Then, for all $\epsilon>0$, for any $\theta \in \D^{I_N}$, and any finite $F \subseteq I_{N}$ there exists an $r>0$ such that the sum 
	$\sum_{i\in F}\theta_i a_{\lambda, i}$ is $\epsilon$-$r$-approximated.
	\end{lemma}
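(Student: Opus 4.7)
The plan is to exploit the fact that each individual element $a_{\lambda,i}$ (for fixed $i \in I_N$) is already in the uniform Roe algebra, so it admits finite-propagation approximations; because $F$ is finite we can take a maximum of finitely many propagation bounds.

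First I would verify that for each $i \in I_N$ the element $a_{\lambda, i}$ belongs to $\cu{X}$. This follows from the separately symmetrically summable hypothesis by specializing the last coordinate $\lambda^{(N)} \in \D^{I_N}$ to be the indicator vector $\delta_i$ (which is a legitimate element of $\D^{I_N}$ since $0, 1 \in \D$): the sum $\sum_{i_N \in I_N} (\delta_i)_{i_N} a_{(\lambda^{(1)}, \dots, \lambda^{(N-1)}, i_N)}$ collapses to the single term $a_{\lambda, i}$, which by the definition lies in $\cu{X}$.

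Next, for each $i \in F$, use the characterization of $\cu{X}$ (Definition \ref{er-approx}) to choose $r_i > 0$ and $b_i \in \cualg{r_i}{X}$ such that $\|a_{\lambda, i} - b_i\| \leq \epsilon/|F|$. Set $r := \max_{i \in F} r_i$ (a maximum over a finite set), so each $b_i \in \cualg{r}{X}$, and define $b := \sum_{i \in F} \theta_i b_i \in \cualg{r}{X}$ (finite linear combinations preserve finite propagation). Then the triangle inequality and $|\theta_i| \leq 1$ give
$$
\Bigl\| \sum_{i \in F} \theta_i a_{\lambda, i} - b \Bigr\| \leq \sum_{i \in F} |\theta_i| \, \|a_{\lambda, i} - b_i\| \leq |F| \cdot \frac{\epsilon}{|F|} = \epsilon,
$$
which is the desired $\epsilon$-$r$-approximation.

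There is essentially no real obstacle here: the lemma is a bookkeeping statement that packages the trivial observation that a finite sum of uniform Roe elements admits a simultaneous finite-propagation approximation by taking the largest of the individual propagation bounds. The only mild technicality is recognizing that the separately symmetrically summable definition covers the case where a coordinate is specialized to a single index, which is handled by the $\delta_i$ trick.
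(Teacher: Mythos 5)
Your proof is correct and follows essentially the same route as the paper's: approximate each $a_{\lambda,i}$ for $i\in F$ to within $\epsilon/|F|$ by an operator of propagation $r_i$, take $r=\max_{i\in F}r_i$, and apply the triangle inequality with $|\theta_i|\leq 1$. Your extra remark justifying $a_{\lambda,i}\in\cu{X}$ via the indicator vector $\delta_i\in\D^{I_N}$ is a slightly more explicit version of what the paper dispatches with ``by supposition,'' but it is the same argument.
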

	
	\begin{proof}
	Let $F$ be a finite subset of $I_N$ and $\epsilon>0$ be given.
	By supposition, for each $i$, we may write  
	$$
	a_{\lambda, i} = b_{\lambda, i} + c_{\lambda, i} \text{ where } b_{\lambda, i} \in \cualg{r_i}{X} \text{ and } \norm{c_{\lambda, i}} < \tfrac{\epsilon}{\abs{F}}.
	$$
	Let $r = \max_{i\in F}\set{r_i}$ and note that $\sum_{i\in F} \theta_i b_{\lambda, i} \in \cualg{r}{X}$ for all $\lambda$.
	Additionally, 
	$$
	\norm{\sum_{i\in F} \theta_i c_{\lambda, i}} \leq \sum_{i\in F} \abs{\theta_i} \norm{c_{\lambda, i}} < \epsilon.
	$$
	Hence, $\sum_{i\in F} \theta_i a_{\lambda, i}$ is $\epsilon$-$r$-approximated for all $\lambda \in \prod_{n=1}^{N-1}\D^{I_n}$.
	\end{proof}
	
	\begin{lemma}
	\label{bf setminus}
	Suppose that $(a_{(i_1, \dots, i_N)})_{(\cl{i}\in \prod_{n=1}^{N} I_n)}$  is a separately symmetrically summable collection of operators in $\cu{X}$ that does not satisfy the conclusion of Lemma \ref{bf corollary}.
	Additionally, let $\lambda = (\lambda^{(1)}, \dots, \lambda^{(N-1)})$.
	Then there is an $\epsilon>0$ so that for all $r>0$ and all finite subsets $F \subseteq I_N$ there exists $\eta \in \mathbb{D}^{I_N}$ such that $\sum_{i\in I_N\setminus F} \eta_i a_{\lambda,i}$ cannot be $\epsilon$-$r$-approximated.
	\end{lemma}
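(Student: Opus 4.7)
The plan is to derive the conclusion by contradiction using the failure hypothesis together with Lemma \ref{finite sum}. The underlying idea is simple: if the full sum $a_{\lambda,\eta}$ cannot be $\epsilon_0$-$r$-approximated, and we can strip off a finite head that is cheaply approximable, then the leftover tail $\sum_{i\in I_N\setminus F}$ must remain poorly approximable.

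First I would translate the failure hypothesis. Since $(a_{(i_1,\dots,i_N)})$ does not satisfy the conclusion of Theorem \ref{bf corollary}, there exists $\epsilon_0>0$ such that for every $r>0$ some choice of $(\lambda,\eta)\in\prod_{n=1}^{N-1}\mathbb{D}^{I_n}\times\mathbb{D}^{I_N}$ makes $a_{\lambda,\eta}$ fail to be $\epsilon_0$-$r$-approximated. I will set $\epsilon:=\epsilon_0/2$ as the witness for the lemma.

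Next, fix $r>0$ and a finite $F\subseteq I_N$. Apply Lemma \ref{finite sum} with tolerance $\epsilon/2$ to produce $r_F>0$ such that for every $\lambda$ and every $\theta\in\mathbb{D}^{I_N}$ the finite sum $\sum_{i\in F}\theta_i a_{\lambda,i}$ is $(\epsilon/2)$-$r_F$-approximated. Let $R:=\max(r,r_F)$. Invoke the failure hypothesis with $R$ in place of $r$ to obtain $\lambda_0$ and $\eta_0$ with $a_{\lambda_0,\eta_0}$ not $\epsilon_0$-$R$-approximated. Decompose
$$
a_{\lambda_0,\eta_0}=S+T,\qquad S:=\sum_{i\in F}\eta_{0,i}a_{\lambda_0,i},\quad T:=\sum_{i\in I_N\setminus F}\eta_{0,i}a_{\lambda_0,i},
$$
where $S$ is a genuine finite sum and $T$ the weak-operator convergent complement. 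If $T$ were $\epsilon$-$r$-approximated it would also be $\epsilon$-$R$-approximated (since any operator of propagation at most $r$ has propagation at most $R$), and combining with the $(\epsilon/2)$-$R$-approximation of $S$ via the triangle inequality would show $a_{\lambda_0,\eta_0}$ is $(3\epsilon/2)$-$R$-approximated; but $3\epsilon/2=3\epsilon_0/4<\epsilon_0$, contradicting the choice of $(\lambda_0,\eta_0)$.

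The main obstacle is the quiet requirement in Step 2 that Lemma \ref{finite sum} produce a single $r_F$ valid for every $\lambda\in\prod_{n=1}^{N-1}\mathbb{D}^{I_n}$ simultaneously. This uniformity is what permits the comparison at propagation radius $R$ in the final triangle inequality; it is obtained by applying the inductive hypothesis on $N$ (for each fixed $i\in F$, the family $(a_{(j_1,\dots,j_{N-1},i)})$ is $(N-1)$-separately symmetrically summable, so Theorem \ref{bf corollary} in dimension $N-1$ gives a radius working for all $\lambda$ at once, and one takes the maximum over the finite set $F$). Once this is in hand, the remainder is bookkeeping.
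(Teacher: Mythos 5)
Your proof is correct and follows essentially the same route as the paper's: both split $a_{\lambda,\eta}$ into the finite head over $F$ (handled by Lemma \ref{finite sum}) and the tail over $I_N\setminus F$, and use a triangle inequality at the common radius $\max\{r,r_F\}$ to play the two approximations off against the failure hypothesis; the paper merely phrases this as a contradiction argument where you argue directly, and your explicit remark about needing the $\lambda$-uniform radius from the inductive hypothesis is a point the paper leaves implicit.
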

	
	\begin{proof}
	Let $(a_{(i_1, \dots, i_N)})_{(\cl{i}\in \prod_{n=1}^{N} I_n)}$ be as in the statement.  
	Then there exists $\delta>0$ such that for all $r>0$ there exists $(\lambda, \eta)\in \opint{\prod_{n=1}^{N-1} \D^{I_n}} \times \mathbb{D}^{I_N}$ such that $a_{\lambda, \eta}$ is not $\delta$-$r$-approximable.  
	Fix this $\lambda$.
	Assume for contradiction that the conclusion of the lemma fails.  Then there exists $s>0$ and a finite subset $F$ of $I_N$ such that for all $\xi\in \mathbb{D}^{I_N}$ we have that $\sum_{i\in I_N\setminus F} \xi_i a_{\lambda,i}$ is $\delta/2$-$s$-approximated. 
	As $F$ is finite, by Lemma \ref{finite sum} there is a $t>0$ such that every element of
	$$
	\Bigg\{\sum_{i\in F}\xi_i a_{\lambda,i} ~\Big|~ \xi\in \mathbb{D}^{I_N}\Bigg\}
	$$
	can be $\delta/2$-$t$-approximated.  
	Now, for arbitrary $\xi\in \mathbb{D}^{I_N}$, 
	$$
	a_{\lambda,\xi} =\sum_{i\in F} \xi_i a_{\lambda,i} +\sum_{i\in I_N\setminus F} \xi_i a_{\lambda,i};
	$$
	as the first term above can be $\delta/2$-$s$-approximated, and as the second can be $\delta/2$-$t$-approximated, this implies that $a_{\lambda,\xi}$ can be $\delta$-$\max\{s,t\}$-approximated.  
	As $\xi$ was arbitrary, this contradicts the first sentence in the proof, and we are done.
\end{proof}
	
	As stated at the end of Remark \ref{rem ref}, the following lemma completes the proof of Corollary \ref{bf corollary}.

	\begin{lemma}
	Suppose that $(a_{(i_1, \dots, i_N)})_{(\cl{i}\in \prod_{n=1}^{N} I_n)}$  is a separately symmetrically summable collection of operators in $\cu{X}$ that does not satisfy the conclusion of Lemma \ref{bf corollary}.
	Let $\lambda = (\lambda^{(1)}, \dots, \lambda^{(N-1)})$.
	Then there is $\epsilon>0$ such that for each $r>0$ the set $U_{\epsilon,r}$ of Definition \ref{N2 r sets} is nowhere dense in $\mathbb{D}^{I_N}$.
	\end{lemma}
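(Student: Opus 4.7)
The plan is to combine the closedness of $U_{\epsilon,r}$ from Lemma \ref{N2 bf4} with Lemma \ref{bf setminus} and Lemma \ref{finite sum}, via a coordinate splicing argument that exploits the definition of the product topology on $\mathbb{D}^{I_N}$. Since $U_{\epsilon,r}$ is already closed, to prove it is nowhere dense it suffices to show its interior in $\mathbb{D}^{I_N}$ is empty. I would argue this by contradiction.

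First, take $\epsilon_0>0$ to be the constant produced by Lemma \ref{bf setminus} and set $\epsilon := \epsilon_0/3$. Suppose for contradiction that for some $r>0$ the set $U_{\epsilon,r}$ contains a nonempty open subset $V$. By definition of the product topology, $V$ is controlled by a finite set of coordinates: there exist a finite subset $F\subseteq I_N$, a point $\eta_0 \in V$, and open neighborhoods $V_i \subseteq \mathbb{D}$ of $\eta_{0,i}$ for each $i\in F$ such that
$$
\set{\mu \in \mathbb{D}^{I_N} \mid \mu_i \in V_i \text{ for all } i \in F} \subseteq V,
$$
with the coordinates in $I_N\setminus F$ free to range over all of $\mathbb{D}$. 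Use Lemma \ref{finite sum} to produce $r_F>0$ such that $\sum_{i\in F}\eta_{0,i}\,a_{\lambda,i}$ is $\epsilon$-$r_F$-approximated, and set $r' := \max\set{r,r_F}$. Then invoke Lemma \ref{bf setminus} with the constant $\epsilon_0$, radius $r'$, and the finite set $F$ to obtain $\xi\in\mathbb{D}^{I_N}$ such that $\sum_{i\in I_N\setminus F}\xi_i\,a_{\lambda,i}$ cannot be $\epsilon_0$-$r'$-approximated.

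The splicing step is the crux. Define $\mu\in\mathbb{D}^{I_N}$ by $\mu_i := \eta_{0,i}$ for $i\in F$ and $\mu_i := \xi_i$ for $i\in I_N\setminus F$. Because $\mu$ agrees with $\eta_0$ on $F$, $\mu\in V \subseteq U_{\epsilon,r}\subseteq U_{\epsilon,r'}$, so $a_{\lambda,\mu}$ is $\epsilon$-$r'$-approximated, say by $b_1\in\cualg{r'}{X}$. Choose $b_2\in\cualg{r_F}{X}\subseteq\cualg{r'}{X}$ approximating the $F$-sum to within $\epsilon$. Then $b_1-b_2\in\cualg{r'}{X}$, and from the decomposition
$$
\sum_{i\in I_N\setminus F}\xi_i\,a_{\lambda,i} \;=\; a_{\lambda,\mu}\;-\;\sum_{i\in F}\eta_{0,i}\,a_{\lambda,i}
$$
the triangle inequality yields
$$
\Bigl\|\sum_{i\in I_N\setminus F}\xi_i\,a_{\lambda,i} - (b_1-b_2)\Bigr\| \leq 2\epsilon = \tfrac{2\epsilon_0}{3} < \epsilon_0,
$$
so the tail sum is $\epsilon_0$-$r'$-approximated, contradicting the choice of $\xi$.

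I do not expect genuine obstacles here, only careful bookkeeping: the work lies in choosing the constants ($\epsilon_0/3$ with a factor of $2$ slack) and in ensuring that both approximating elements can be taken inside the same $\cualg{r'}{X}$ so that subtraction preserves the propagation bound. Combined with the union (\ref{N2 union}), the nowhere-density of every $U_{\epsilon,r}$ then contradicts the Baire category theorem on the compact space $\mathbb{D}^{I_N}$, completing the proof of Theorem \ref{bf corollary} as indicated at the end of Remark \ref{rem ref}.
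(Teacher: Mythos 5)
Your proposal is correct and follows essentially the same route as the paper: closedness of $U_{\epsilon,r}$ via Lemma \ref{N2 bf4}, a finite set of controlled coordinates from the product topology, an $\epsilon$-$s$-approximation of the finite head sum, a bad tail from Lemma \ref{bf setminus}, and the same splice-and-subtract argument with two approximants of common propagation. The only cosmetic difference is your choice of $\epsilon_0/3$ where the paper uses $\epsilon'/2$, which just leaves a little extra slack.
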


	\begin{proof}
	Let $(a_{(i_1, \dots, i_N)})_{(\cl{i}\in \prod_{n=1}^{N} I_n)}$ be as in the statement.  
	Then there exists $\delta>0$ such that for all $r>0$ there exists $(\lambda, \eta)\in \Big(\prod_{n=1}^{N-1} \D^{I_n}\Big) \times \mathbb{D}^{I_N}$ such that $a_{\lambda, \eta}$ is not $\delta$-$r$-approximable.  
	Fix this $\lambda$.
	Let $\epsilon'>0$ have the property from Lemma \ref{bf setminus}.  
	We claim that $\epsilon:=\epsilon'/2$ has the property required for this lemma.   
	Assume for contradiction that for some $r>0$, $U_{\epsilon,r}$ is not nowhere dense.  
	Lemma \ref{N2 bf4} implies that $U_{\epsilon,r}$ is closed, and so it contains a point $\xi$ in its interior.  
	Then by definition of the product topology there exists a finite set $F\subseteq I_N$ and $\delta>0$ such that the set 
	\begin{equation}\label{vdef 2}
	V:=\{\nu\in \mathbb{D}^{I_N}\mid |\xi_i-\nu_i|<\delta \text{ for all } i\in F\} \h \text{is contained in} \h U_{\epsilon,r}.
	\end{equation}

	Note that the element $\sum_{i\in F} \xi_i a_{\lambda,i}$ is in $C^*_u(X)$ by assumption, so can be $\epsilon$-$s$-approximated for some $s$. Let $b_{\xi, \lambda}\in \C_u^s[X]$ be such that $\|\sum_{i\in F} \xi_i a_{\lambda, i}-b_{\lambda,\xi}\|\leq \epsilon$.  
	On the other hand, Lemma \ref{bf setminus} gives us $\mu\in \mathbb{D}^{I_N}$ so that $\sum_{i\in I\setminus F} \mu_i a_{\lambda,i}$ cannot be $\epsilon'$-$\max\{r,s\}$-approximated.  
	We may further assume that $\mu_i=0$ for $i\in F$.  Define $\theta\in \mathbb{D}^I$ by 
	$$
	\theta_i:=\left\{\begin{array}{ll} \xi_i & i\in F \\ \mu_i & i\not\in F\end{array}\right.
	$$
	Then $\theta$ is clearly in the set $V$ of line \eqref{vdef 2}, and so $a_{\lambda,\theta}$ is $\epsilon$-$r$-approximated.  
	Let then $b_{\lambda,\theta}\in \C_u^r[X]$ be such that $\|a_{\lambda,\theta}-b_{\lambda,\theta}\|\leq \epsilon$.  
	We then see that  
	$$
	\|a_{\lambda,\mu}-(b_{\lambda,\theta}-b_{\lambda,\xi})\|  \leq  \|a_{\lambda,\mu}-a_{\lambda,\theta}+b_{\lambda,\xi}\|+\|a_{\lambda,\theta}-b_{\lambda,\theta}\|  
	$$
	$$\leq \Bigg\|b_{\lambda,\xi} - \sum_{i\in F} \xi_i a_{\lambda,i}\Bigg\|+\|a_{\lambda,\theta}-b_{\lambda,\theta}\| 
	$$
	The terms on the bottom row are each less than $\epsilon$ by choice of $b_{\lambda,\xi}$ and $b_{\lambda,\theta}$, and so $\|a_{\lambda,\mu}-(b_{\lambda,\theta}-b_{\lambda,\xi})\|\leq 2\epsilon=\epsilon'$.  
	As $b_{\lambda,\xi}+b_{\lambda,\theta}$ has propagation at most $\max\{r,s\}$, this contradicts the assumption that $a_{\lambda,\mu}$ cannot be $\epsilon'$-$\max\{r,s\}$-approximated, so we are done. 
	\end{proof}
	
%%%%%%%%%%%%%%%%%%%%%%%%%%%%%%%%%%%%%%%%%%%%%%%%%%%%

\section{\centering Hochschild Cohomology}
\label{hochcoho}

In this section we introduce Hochschild cohomology, its construction, and several of its properties.

	\begin{definition}[Dual normal module]
	Let $\mathcal{M}$ be a von Neumann algebra.
	we say that $\mathcal{W}$ is a \textit{dual normal module} over $\mathcal{M}$ if:
	\begin{enumerate}[(i)]
	\item
	$\mathcal{W}$ is a dual $\mathcal{M}$-bimodule (Definition \ref{dual module}),
	\item
	and the maps 
	$$
	\mathcal{M} \to \mathcal{W} \h \text{defined by} \h m\mapsto mx \h \text{and} \h m\mapsto xm
	$$
	 are ultraweak - weak* continuous for all $x\in \mathcal{W}$.
	 \end{enumerate}
	 \end{definition}

	\begin{definition}[Subdual]
	Let $\mathcal{A}$ be a C*-algebra and let $\mathcal{V}$ be an $\mathcal{A}$-submodule of a dual module (as in Definition \ref{dual module}) $\mathcal{W}$ (under the same action). We will call such a module $\mathcal{V}$ a \textit{subdual} of $\mathcal{W}$. Note that we are not requiring  $\mathcal{V}$ to be a dual space, just that it is a submodule of a specified dual space. Moreover, if $\mathcal{A}$ is a C*-subalgebra of a von Neumann algebra $\mathcal{M}$ where $\mathcal{W}$ is a dual normal $\mathcal{M}$-module and the action of $\mathcal{A}$ on $\mathcal{V}$ is inherited from the $\mathcal{M}$-action on $\mathcal{W}$ then we say that $\mathcal{V}$ is a \textit{subdual normal} $\mathcal{A}$-module of $\mathcal{W}$.
	\end{definition}
	
	An example of a subdual normal module is the uniform Roe algebra acting on itself by multiplication. $\cu{X}$ acts on $\BB(\ell^2(X))$ by multiplication making $\BB(\ell^2(X))$ a $\cu{X}$-module. $\BB(\ell^2(X))$ is a dual space with predual $\mathcal{L}^1(\ell^2(X))$, the trace class operators. So $\cu{X}$ is a submodule of the dual space $\BB(\ell^2(X))$. However, $\cu{X}$ is not usually a dual space. This additional structure on the submodule allows us to use the relative weak* topology inherited from the parent module.
	
	By $\LL^n_c(\mathcal{A}, \mathcal{V})$ we mean the vector space of separately norm continuous multilinear maps from the $n$-fold Cartesian product of $\mathcal{A}$ to the $\mathcal{A}$-bimodule $\mathcal{V}$ when $n \geq 1$ and $\LL^0_c(\mathcal{A}, \mathcal{V}) := \mathcal{V}$. 
	
	Let $\mathcal{A}$ be a concrete C*-algebra.
	If $\mathcal{W}$ is a dual normal $\mathcal{A}$-bimodule with subdual $\mathcal{V}$, we use the notation 
	$\LL_w^n(\mathcal{A}, \mathcal{V})$ to indicate the vector space of 
	multilinear maps that are separately ultraweak-weak* continuous; that is, for $\phi \in \LL_w^n(\mathcal{A}, \mathcal{V})$
	$$
	\text{if} \h \set{a_{\alpha}} \subset\mathcal{A} \h \text{is a net such that}  \h a_{\alpha} \to a \in \mathcal{A} \h \text{ultraweakly in} \h\BB(\HH)   
	$$
	$$
	\text{then} \h \phi(\dots, a_{\alpha},\dots) \to \phi(\dots, a,\dots)\in \mathcal{V} \h \text{weak* in } \mathcal{W}.
	$$
		
	When we write $\LL^n(\mathcal{A},\mathcal{V})$ then either subscript may be attached.
	Considering $\A$ as a module over itself we will simply write $\LL(\A)$.
	Additionally, we equip both $\LL^n_c(\mathcal{A},\mathcal{V})$ and $\LL^n_w(\mathcal{A},\mathcal{V})$ with the operator norm.

	\begin{remark}
	Note that while $\LL^n_c(\mathcal{A},\mathcal{V})$ is complete in norm, we are not assuming, nor do we require these vector spaces to be complete in norm. \\
	\end{remark}
	
	To define the Hochschild cohomology we first construct the cochain complex
	$$
	0 \to \LL^{0}(\mathcal{A},\mathcal{V}) \os{\partial}{\to} \LL^{1}(\mathcal{A},\mathcal{V}) \os{\partial}{\to} \cdots
	\os{\partial}{\to} \LL^{n}(\mathcal{A},\mathcal{V}) \os{\partial}{\to} \LL^{n+1}(\mathcal{A},\mathcal{V}) \os{\partial}{\to} \cdots
	$$
	for both the norm continuous and ultraweak-weak* continuous cases
	where the coboundary operator $\partial: \LL^n(\mathcal{A}, \mathcal{V}) \to \LL^{n+1}(\mathcal{A}, \mathcal{V})$ is defined by

	$$
	(\partial\phi)(a_1, \dots, a_{n+1}) = a_1\phi(a_2, \dots, a_{n+1})
	$$
	$$
	+\sum_{j=1}^n (-1)^j\phi(a_1, \dots, a_ja_{j+1}, \dots, a_{n+1})
	$$
	$$
	+(-1)^{n+1} \phi(a_1, \dots, a_n)a_{n+1} \h (n \geq 1)
	$$
	and for $n=0$
	$$
	(\partial v)(a) = av-va \h (v\in\mathcal{V}, a \in \mathcal{A}).
	$$
	A straightforward calculation shows that $\partial^2$ is always zero.
	The $n^{th}$ Hochschild cohomology group $H_c^n(\mathcal{A}, \mathcal{V})$ (resp. $H_w^n(\mathcal{A}, \mathcal{V})$ in the ultraweak-weak* case) is the quotient vector space 
	$$
	H^n(\mathcal{A},\mathcal{V}):= \frac{\ker(\partial: \mathscr{L}^n(\mathcal{A}, \mathcal{V}) \to \mathscr{L}^{n+1}(\mathcal{A}, \mathcal{V}))}{\im(\partial: \mathscr{L}^{n-1}(\mathcal{A}, \mathcal{V}) \to \mathscr{L}^{n}(\mathcal{A}, \mathcal{V}))}.
	$$
		
	Additionally, when we consider $\A$ as a module over itself we simply write $H^n(\A)$.
	The cohomology obtained from this construction is the \textit{Hochschild} cohomology.
	We call an element $\phi \in \ker(\partial: \mathscr{L}^n(\mathcal{A}, \mathcal{V}) \to \mathscr{L}^{n+1}(\mathcal{A}, \mathcal{V}))$ a \textit{cocycle}, 
	and we call an element $\psi \in \im(\partial: \mathscr{L}^{n-1}(\mathcal{A}, \mathcal{V}) \to \mathscr{L}^{n}(\mathcal{A}, \mathcal{V}))$ a \textit{coboundary}.
	
	\begin{definition}[multimodular maps]
	Let $\mathcal{A}$ be a C*-algebra and let $\phi:\mathcal{A}^n \to \mathcal{V}$ be a bounded multilinear map to the Banach $\mathcal{A}$-bimodule $\mathcal{V}$. 
	If $\mathcal{B}$ is a C*-subalgebra of $\mathcal{A}$ we say that $\phi$ is \textit{$\mathcal{B}$-multimodular} if for any $b \in \mathcal{B}$ the following hold.
	\begin{enumerate}
	\item
	$
	b \phi(a_1, \dots, a_n) = \phi(ba_1, \dots, a_n),
	$
	\item
	$
	\phi(a_1, \dots, a_{j-1}b, a_j, \dots, a_n) = \phi(a_1, \dots, a_{j-1}, ba_j, \dots, a_n) \h \text{and}
	$
	\item
	$
	\phi(a_1, \dots, a_nb) = \phi(a_1, \dots, a_n)b
	$
	\end{enumerate}
	\end{definition}

	If $\mathcal{B}$ is a C*-subalgebra of $\mathcal{A}$ we use the notation $\LL^n(\mathcal{A}, \mathcal{V}: \mathcal{B})$ to indicate that the maps are $\mathcal{B}$-multimodular where we may use either subscript,``$c$" or ``$w$".
	As before we may construct the Hochschild cohomology of $\mathcal{B}$-multimodular maps which we denote by $H^n(\mathcal{A},\mathcal{V}:\mathcal{B})$ where either subscript $c$ or $w$ may be attached.
	Additionally, if we are considering $\A$ as a module over itself we simply write $H^n(\A:\mathcal{B})$.

	\subsection{Sinclair and Smith's `Reduction of Cocycles'}
	
	In this subsection we introduce a method to modify a cocycle, say $\phi \in \LL^n_c(\mathcal{A},\mathcal{W})$, by a coboundary to obtain a map in $\LL^n_c(\mathcal{A},\mathcal{W}:\mathcal{B})$ where $\mathcal{A} \sub\BB(\HH)$ is a C*-algebra, $\mathcal{B}$ is a C*-subalgebra of $\mathcal{A}$, and $\mathcal{W}$ is a dual normal $\BB(\HH)$-bimodule.

	\begin{lemma}[\cite{sinclair1995hochschild} Lemma 3.2.1]
	\label{lemma3.2.1}
	Let $\mathcal{B}$ be a unital subalgebra of a unital C*-algebra $\mathcal{A}$. Let $\mathcal{W}$ be a Banach $\mathcal{A}$-bimodule, and let $\phi \in \LL^n(\mathcal{A}, \mathcal{W})$ with $\partial\phi = 0$. 
	
	Then for all $b \in \mathcal{B}$ and $x_1,\dots, x_n \in \mathcal{A}$ we have:
	\begin{enumerate}[(i)]
	\item
	$\phi(b, x_2, \dots, x_n) =0$ if and only if\\
	$\phi(1,x_2, \dots, x_n) = 0$ and $\phi(bx_1, x_2, \dots, x_n) = b\phi(x_1,\dots, x_n)$.
	\item
	Fix $k \leq n$. Then for all $j\in\set{2,\dots, k}$,\\
	$\phi(x_1, \dots, x_{j-1}, b, x_{j+1}, \dots, x_n) =0$ if and only if\\
	$\phi(x_1, \dots, x_{j-1}, 1, x_{j+1}, \dots, x_n) = 0$ and\\ 
	$\phi(x_1, \dots, x_{j-1}b,x_{j}, \dots, x_n) = \phi(x_1, \dots, x_{j-1}, bx_{j}, \dots, x_n)$
	\item
	Additionally, \\
	$\phi(x_1, \dots, x_{n-1},b) =0$ if and only if\\
	$\phi(x_1, \dots, x_{n-1},1) =0$ and $\phi(x_1, \dots, x_nb) = \phi(x_1, \dots, x_n)b$ 
	\qed
	\end{enumerate}
	\end{lemma}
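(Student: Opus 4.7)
The plan is to apply the cocycle identity $\partial \phi = 0$ to carefully chosen $(n+1)$-tuples in each case and read off the conclusion by cancelling the terms that vanish by hypothesis.

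The backward directions are easy in all three parts: specialising one of the $\mathcal{A}$-variables to $1$ turns the multimodularity identity into the desired vanishing statement. For instance, in part~(i), setting $x_1 = 1$ in $\phi(bx_1, x_2, \dots, x_n) = b\phi(x_1, \dots, x_n)$ yields $\phi(b, x_2, \dots, x_n) = b\phi(1, x_2, \dots, x_n) = 0$; parts~(ii) and~(iii) are handled analogously by substituting $1$ into slot $j$ or slot $n$.

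For the forward direction of~(i), apply $\partial \phi = 0$ to the $(n+1)$-tuple $(b, x_1, x_2, \dots, x_n)$. Every term in the expansion of $\partial \phi$ other than $b\phi(x_1, \dots, x_n)$ and $-\phi(bx_1, x_2, \dots, x_n)$ has $b$ sitting in the first slot of its inner $\phi$ (the only merge that moves $b$ is $a_1 a_2 = bx_1$; every other merge and the right-multiplication term leave $b$ at position $1$), so all such terms vanish by hypothesis, and what remains is precisely the claimed left-multimodularity. Part~(iii) is the mirror image: applying $\partial \phi = 0$ to $(x_1, \dots, x_n, b)$, every term except $(-1)^n \phi(x_1, \dots, x_{n-1}, x_n b)$ and $(-1)^{n+1}\phi(x_1, \dots, x_n)b$ has $b$ in the final slot and hence vanishes, yielding right-multimodularity.

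For the forward direction of~(ii) I would argue by induction on $j$, with the base case $j = 1$ supplied by part~(i). At step $j \in \{2, \dots, k\}$, apply $\partial \phi = 0$ to the expanded tuple $(x_1, \dots, x_{j-1}, b, x_j, x_{j+1}, \dots, x_n)$. Among the $n+2$ resulting terms, those in which $b$ remains in position~$j$ of the inner $\phi$ vanish by the current hypothesis, while those in which $b$ migrates to a slot in $\{1, \dots, j-1\}$ (namely the left-multiplication term $x_1\phi(\dots)$ and the merges $a_i a_{i+1}$ with $i \le j-2$) vanish by the inductive assumption. The only surviving contributions come from merging $x_{j-1}$ with $b$ and from merging $b$ with $x_j$; these appear with opposite signs and combine to give $\phi(x_1, \dots, x_{j-1}b, x_j, \dots, x_n) = \phi(x_1, \dots, x_{j-1}, bx_j, \dots, x_n)$. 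The main obstacle is the combinatorial book-keeping: for each of the $n+2$ terms one must locate the new position of $b$ and verify it lands in the ``current hypothesis'' bucket, the ``previously established'' bucket, or one of the two adjacent merges. The clause ``for all $j \in \{2, \dots, k\}$'' with $k \leq n$ is the natural packaging of this induction, since the equivalence at position~$j$ rests on the analogous vanishing at every position $j' < j$.
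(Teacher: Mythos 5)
The paper offers no proof of this lemma at all --- it is quoted from \cite{sinclair1995hochschild}, Lemma 3.2.1, with an immediate \qedsymbol\ --- so your argument can only be measured against the standard one from that source, which it essentially reproduces: expand $\partial\phi=0$ on the $(n+1)$-tuple obtained by inserting $b$ beside the relevant slot, and kill every term in which $b$ survives in a position where $\phi$ is assumed to vanish. Your backward directions and the forward directions of (i) and (iii) are correct as written (for (i) you should also record that $\phi(1,x_2,\dots,x_n)=0$ is just the specialization $b=1$, using $1\in\mathcal{B}$).

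The gap is in the forward direction of (ii). In the expansion of $\partial\phi(x_1,\dots,x_{j-1},b,x_j,\dots,x_n)=0$, the term $x_1\phi(x_2,\dots,x_{j-1},b,x_j,\dots,x_n)$ and the merges $a_ia_{i+1}$ with $i\le j-2$ leave $b$ precisely in slot $j-1$ of the inner $\phi$ (not in an arbitrary slot of $\{1,\dots,j-1\}$, though that is cosmetic). To discard them you need the \emph{vanishing} of $\phi$ with a $\mathcal{B}$-element in slot $j-1$; but your inductive hypothesis is the \emph{equivalence} established at positions $j'<j$, and an equivalence does not hand you either of its sides. The single-slot implication is in fact false: for $n=3$, $j=2$, removing the slot-$2$ terms leaves $\phi(x_1b,x_2,x_3)-\phi(x_1,bx_2,x_3)=x_1\phi(b,x_2,x_3)$, whose right-hand side need not vanish under the hypothesis $\phi(x_1,b,x_3)=0$ alone. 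What closes the argument is reading the hypothesis of (ii) cumulatively --- $\phi$ vanishes whenever a $\mathcal{B}$-element occupies \emph{any} slot $1,\dots,j$, which is how Sinclair and Smith state the lemma and the only reading under which the transcribed ``if and only if'' is true --- after which the terms you consign to the ``inductive'' bucket die directly by hypothesis and no induction on $j$ is needed.
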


	\begin{lemma}[\cite{sinclair1995hochschild} Lemma 3.2.4]
	\label{lemma3.2.4}

	Let $\mathcal{B}$ be a C*-subalgebra spanned by an amenable group $\mathcal{U}$ (with respect to the discrete topology) of unitaries in a unital C*-algebra $\mathcal{A}$, and let $\mathcal{W}$ be a dual Banach $\mathcal{A}$-bimodule. There is a continuous linear map 
	$$
	K_n:\LL^n_c(\mathcal{A},\mathcal{W}) \to \LL^{n-1}_c(\mathcal{A},\mathcal{W}) 
	$$
	(depending on a choice of invariant mean on $ \mathcal{U}$)
	such that if $\phi \in \LL^n_c(\mathcal{A}, \mathcal{W})$ satisfies $\partial\phi = 0$ then $\phi-\partial(K_n\phi)$ is $\mathcal{B}$-multimodular.
	Moreover, we have that
	$$
	\norm{K_n}\leq\tfrac{(n+2)^n - 1}{n+1}.
	$$
	\qed
	\end{lemma}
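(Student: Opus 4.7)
The plan is to construct $K_n \phi$ as a sum of averages of $\phi$ against unitaries in $\mathcal{U}$, taken with respect to the invariant mean supplied by amenability, using the machinery of Section \ref{sec3}. The strategy is dictated by Lemma \ref{lemma3.2.1}: for a cocycle $\phi$, $\mathcal{B}$-multimodularity is equivalent to the normalization conditions $\phi(x_1,\dots,x_{j-1},1,x_{j+1},\dots,x_n)=0$ holding in each of the $n+1$ slots, because $\partial\phi=0$ then supplies the multiplicative identities automatically. Thus it suffices to modify $\phi$ by a coboundary so as to annihilate each of these $n+1$ slot-$1$ evaluations in turn.

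Starting from $\phi^{(0)}:=\phi$, the construction is iterative. At step $k\in\{1,\dots,n+1\}$ one defines $\psi^{(k)}\in\LL^{n-1}_c(\mathcal{A},\mathcal{W})$ by an average of (schematically) the form
$$
\psi^{(k)}(x_1,\dots,x_{n-1}) := \int_{\mathcal{U}} \phi^{(k-1)}\bigl(x_1,\dots,x_{k-1},u,u^{-1}x_k,x_{k+1},\dots,x_{n-1}\bigr)\,d\mu(u),
$$
with suitable modifications at the boundary positions $k=1$ and $k=n+1$, and sets $\phi^{(k)}:=\phi^{(k-1)}-\partial\psi^{(k)}$. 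Expanding $\partial\psi^{(k)}$ at a tuple with $1$ substituted in slot $k$ and invoking the cocycle identity $\partial\phi^{(k-1)}=0$ inside the integral (applied to a tuple containing the unitary $u$), most terms telescope or cancel by invariance of the mean under substitutions $u\mapsto uv$; what survives matches $\phi^{(k-1)}$ at slot $k$, providing the desired cancellation. Setting $K_n\phi := \sum_{k=1}^{n+1}\psi^{(k)}$ then gives $\phi-\partial(K_n\phi) = \phi^{(n+1)}$, which is $\mathcal{B}$-multimodular.

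The main technical obstacle is to ensure that averaging at step $k$ preserves the normalizations already achieved at steps $1,\dots,k-1$. The key point is that if $\phi^{(k-1)}$ already vanishes whenever a $1$ is substituted at slot $j<k$, then the integrand defining $\psi^{(k)}$ inherits that vanishing (only slot $k$ is genuinely touched by the averaging variable $u$), so $\partial\psi^{(k)}$ also vanishes at slot $j$. Weak-$*$ convergence and well-definedness of each integral follow from Lemma \ref{avg lem} applied with $B^*=\mathcal{W}$, while norm-continuity and linearity of $K_n$ inherit from the contractivity and linearity of the invariant mean.

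For the norm estimate, each averaging step $\phi^{(k-1)}\mapsto\psi^{(k)}$ is norm-contractive, while the update $\phi^{(k-1)}\mapsto\phi^{(k)}=\phi^{(k-1)}-\partial\psi^{(k)}$ contributes a factor of at most $n+2$ from the $n+2$ terms of $\partial$. A crude iteration yields $\|\phi^{(k)}\|\leq(n+2)^k\|\phi\|$, and summing the geometric series $\sum_{k=1}^{n+1}(n+2)^{k-1}\|\phi\|$ gives the bound $\frac{(n+2)^{n+1}-1}{n+1}\|\phi\|$; the sharper bound $\frac{(n+2)^n-1}{n+1}$ stated in the lemma requires a more careful accounting that exploits the fact that at certain steps only coboundary-type terms, not the full $\phi^{(k-1)}$, pick up the $(n+2)$-factor.
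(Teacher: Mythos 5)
Your construction is essentially the one the paper recalls in the remark immediately following the lemma: the standard Sinclair--Smith scheme $J_1$, $G_k$, $J_{k+1}=J_k+(-1)^kG_k(I-\partial J_k)$, $K_n=J_n$, i.e.\ iterated averaging over $\mathcal{U}$ via the invariant mean, normalizing one argument slot at a time, with Lemma \ref{lemma3.2.1} converting slot-wise vanishing at elements of $\mathcal{B}$ into the multimodularity identities, and with each later averaging step preserving the normalizations already achieved. The one concrete slip is an off-by-one in the number of steps: an $n$-linear map has $n$ argument slots, and by Lemma \ref{lemma3.2.1} the vanishing of $\phi$ whenever an element of $\mathcal{B}$ occupies any one of these $n$ slots already yields all $n+1$ multimodularity identities, since each interior identity is shared between two adjacent slot conditions. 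So the iteration consists of $n$ averaging steps ($J_1$ followed by $G_1,\dots,G_{n-1}$), not $n+1$. With the correct count the norm bound falls out immediately: $\|J_1\|\le 1$ and $\|J_{k+1}\|\le \|J_k\|+\|G_k\|\bigl(1+\|\partial\|\,\|J_k\|\bigr)\le (n+2)\|J_k\|+1$, using $\|G_k\|\le 1$ and the fact that $\partial:\LL^{n-1}_c(\mathcal{A},\mathcal{W})\to\LL^{n}_c(\mathcal{A},\mathcal{W})$ has norm at most $n+1$ (it has $n+1$ terms), whence $\|K_n\|=\|J_n\|\le \sum_{j=0}^{n-1}(n+2)^j=\tfrac{(n+2)^n-1}{n+1}$. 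No sharper accounting is needed, contrary to your closing remark; the discrepancy you observed is entirely due to the extra step in your count.
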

	
	\begin{remark}
	As we will need it later, let us recall that the map $K_n$ is constructed recursively via 
	$$ 
	J_1: \LL_c^n(\mathcal{A},\WW) \to \LL_c^{n-1}(\mathcal{A},\WW) \h \text{defined by}  
	$$
	\begin{equation}
	\label{J_1}
	(J_1\phi)(a_1, \dots, a_{n-1}) = \int_{\UU}u^*\phi(u,a_1, \dots, a_{n-1})\dd\mu(u),
	\end{equation}
	$$
	G_k: \LL_c^n(\mathcal{A},\WW) \to \LL_c^{n-1}(\mathcal{A},\WW) \h \text{defined by}  
	$$
	\begin{equation}
	\label{G_k}
	(G_k\phi)(a_1, \dots, a_{n-1}) = \int_{\UU}\phi(a_1, \dots, a_ku^*,u, a_{k+1}, \dots, a_{n-1})\dd\mu(u),
	\end{equation}
	$$
	J_{k+1}:\LL_c^n(\mathcal{A},\WW) \to \LL_c^{n-1}(\mathcal{A},\WW) \h \text{defined by} \h 
	J_{k+1} = J_k+(-1)^kG_k(I-\partial J_k),
	$$
	\begin{equation}
	\label{K_n}
	\text{ and } K_n = J_n.
	\end{equation}
	\end{remark}
	
	\begin{lemma}[\cite{sinclair1995hochschild} Lemma 3.2.6]
	\label{lemma3.2.6}
	Let $\mathcal{A}$ be a unital C*-algebra and let $\mathcal{W}$ be a dual $\mathcal{A}$-bimodule.
	Suppose that $\mathcal{B}$ is a C*-subalgebra of $\mathcal{A}$ generated by an amenable group $\mathcal{U}$ of unitaries. 
	Then there is a continuous surjective linear projection $Q_n: \LL^n_c(\mathcal{A},\mathcal{W}) \to \LL^n_c(\mathcal{A},\mathcal{W}:\mathcal{B})$ such that $\partial Q_{n-1} = Q_n\partial$ and $\norm{Q_n}=1$.
	\qed
	\end{lemma}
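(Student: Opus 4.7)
The plan is to construct $Q_n$ directly as an averaging operator over the group $\mathcal{U}^{n+1}$, which is amenable as a product of amenable groups. For $\phi \in \LL^n_c(\mathcal{A}, \mathcal{W})$ and $a_1, \ldots, a_n \in \mathcal{A}$, I would define
$$
(Q_n\phi)(a_1, \ldots, a_n) := \int_{\mathcal{U}^{n+1}} u_0\, \phi(u_0^* a_1 u_1,\, u_1^* a_2 u_2,\, \ldots,\, u_{n-1}^* a_n u_n)\, u_n^* \, \dd\mu(u_0) \cdots \dd\mu(u_n),
$$
interpreted via the iterated invariant mean of Section \ref{sec3}. Since $\mathcal{W}$ is a dual $\mathcal{A}$-bimodule and the integrand is uniformly bounded in norm by $\|\phi\|\,\|a_1\|\cdots\|a_n\|$, Lemma \ref{avg lem} guarantees $(Q_n\phi)(a_1,\ldots,a_n)$ is a well-defined element of $\mathcal{W}$, and $Q_n$ is linear with $\|Q_n\| \leq 1$. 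Separate norm continuity in each argument is inherited from that of $\phi$.

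To show $Q_n\phi$ is $\mathcal{B}$-multimodular, it suffices, by linearity and norm continuity, to check the three multimodularity identities for $b \in \mathcal{U}$. For the left action, the substitution $u_0 \mapsto b^* u_0$ combined with invariance of the mean yields $b \cdot (Q_n\phi)(a_1,\ldots) = (Q_n\phi)(b a_1, a_2, \ldots)$. Analogous substitutions $u_{j-1}\mapsto b^* u_{j-1}$ and $u_n \mapsto b^* u_n$ produce, respectively, the identity $(Q_n\phi)(\ldots, a_{j-1}b, a_j, \ldots) = (Q_n\phi)(\ldots, a_{j-1}, b a_j, \ldots)$ for each intermediate slot, and $(Q_n\phi)(a_1,\ldots,a_n b) = (Q_n\phi)(a_1,\ldots,a_n) b$. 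Hence $Q_n$ maps into $\LL^n_c(\mathcal{A},\mathcal{W}:\mathcal{B})$.

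Next, if $\phi$ is already $\mathcal{B}$-multimodular, then iterated application of the multimodularity axioms, absorbing each $u_i u_i^* = 1$ in turn, collapses the integrand to the constant $\phi(a_1,\ldots,a_n)$. Since $\Psi$ fixes constants, $Q_n\phi = \phi$. This simultaneously gives $Q_n^2 = Q_n$, surjectivity of $Q_n$ onto $\LL^n_c(\mathcal{A},\mathcal{W}:\mathcal{B})$, and the lower bound $\|Q_n\|\geq 1$, so $\|Q_n\|=1$.

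The identity $\partial Q_{n-1} = Q_n \partial$ is the remaining point, and amounts to a term-by-term Fubini-type computation. In $(Q_n \partial \phi)(a_1,\ldots,a_n)$ with integration variables $v_0,\ldots,v_n$, the first Hochschild summand $a_1\phi(a_2,\ldots,a_n)$ has $v_0$ cancelled by its inverse coming from the outer conjugation; the last summand $\phi(a_1,\ldots,a_{n-1})a_n$ has $v_n$ cancelled; and in each middle summand $(-1)^j\phi(\ldots,a_j a_{j+1},\ldots)$ the unitary $v_j$ is eliminated via $v_j v_j^* = 1$. In every case the `freed' variable integrates out against a constant, and the remaining $n$-fold integral, after relabeling, matches the corresponding term of $\partial(Q_{n-1}\phi)$. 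The main obstacle I expect is justifying that the iterated mean genuinely reduces coordinate by coordinate for these manipulations; but this reduces to the property (contained in Lemma \ref{avg lem}) that $\Psi$ acts as the identity on functions constant in the variable being averaged, which allows the trivial integrations to be carried out one coordinate at a time.
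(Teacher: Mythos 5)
Your construction is correct and is essentially the standard one: the paper itself states this lemma without proof, citing Sinclair--Smith, and their Lemma 3.2.6 is proved by exactly this $(n+1)$-fold averaging $\phi\mapsto\int u_0\,\phi(u_0^*a_1u_1,\dots,u_{n-1}^*a_nu_n)\,u_n^*$ over $\mathcal{U}^{n+1}$, with the same verification of multimodularity by translation invariance, idempotency on multimodular maps, and the term-by-term cancellation giving $\partial Q_{n-1}=Q_n\partial$. The only cosmetic point is that your substitutions are left translations, so one should take a two-sided invariant mean on $\mathcal{U}$ (which exists for amenable groups) rather than the merely right-invariant one set up in Section \ref{sec3}.
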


	We conclude this section with a theorem that will be useful in the next section.
	
	\begin{theorem}[\cite{sinclair1995hochschild} Theorem 3.2.7]
	\label{theorem3.2.7}
	Let $\mathcal{B}$ be the C*-algebra generated by an amenable group $\mathcal{U}$ of unitaries in a unital C*-algebra $\mathcal{A}$, and let $\mathcal{W}$ be a dual $\mathcal{A}$-bimodule. Then
	$$
	H_c^n(\mathcal{A},\mathcal{W}) \iso H_c^n(\mathcal{A},\mathcal{W}: \mathcal{B})
	$$
	for all $n\in\N$ with isomorphism induced by the natural embedding 
	$$
	\LL_c^n(\mathcal{A},\mathcal{W}: \mathcal{B}) \hookrightarrow \LL_c^n(\mathcal{A},\mathcal{W}).
	$$
	\end{theorem}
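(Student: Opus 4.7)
The plan is to show that the induced map on cohomology, call it $\iota^*: H_c^n(\mathcal{A},\mathcal{W}:\mathcal{B}) \to H_c^n(\mathcal{A},\mathcal{W})$, is both surjective and injective using Lemmas \ref{lemma3.2.4} and \ref{lemma3.2.6}. Both directions are essentially formal consequences of those lemmas, so the bulk of the work is already done.

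For surjectivity, take any class $[\phi] \in H_c^n(\mathcal{A},\mathcal{W})$ with representative cocycle $\phi \in \LL_c^n(\mathcal{A},\mathcal{W})$, meaning $\partial\phi = 0$. By Lemma \ref{lemma3.2.4} applied to $\phi$, the modified cocycle
\[
\phi' := \phi - \partial(K_n\phi)
\]
is $\mathcal{B}$-multimodular, so $\phi' \in \LL_c^n(\mathcal{A},\mathcal{W}:\mathcal{B})$. Since $\partial^2 = 0$, we also have $\partial\phi' = \partial\phi = 0$, so $\phi'$ is a genuine cocycle in $\LL_c^n(\mathcal{A},\mathcal{W}:\mathcal{B})$. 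By construction $\phi - \phi' = \partial(K_n\phi)$ is a coboundary in $\LL_c^n(\mathcal{A},\mathcal{W})$, so $\iota^*[\phi'] = [\phi]$.

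For injectivity, suppose $\phi \in \LL_c^n(\mathcal{A},\mathcal{W}:\mathcal{B})$ is a $\mathcal{B}$-multimodular cocycle such that $\iota^*[\phi]=0$, i.e.\ there exists $\psi \in \LL_c^{n-1}(\mathcal{A},\mathcal{W})$ with $\phi = \partial\psi$. We must produce $\psi' \in \LL_c^{n-1}(\mathcal{A},\mathcal{W}:\mathcal{B})$ with $\phi = \partial\psi'$. The candidate is $\psi' := Q_{n-1}\psi$, which lies in $\LL_c^{n-1}(\mathcal{A},\mathcal{W}:\mathcal{B})$ since $Q_{n-1}$ projects onto that subspace. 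Using the intertwining relation from Lemma \ref{lemma3.2.6},
\[
\partial\psi' = \partial Q_{n-1}\psi = Q_n\partial\psi = Q_n\phi.
\]
Because $Q_n$ is a projection onto $\LL_c^n(\mathcal{A},\mathcal{W}:\mathcal{B})$ and $\phi$ already lies in this subspace, $Q_n\phi = \phi$, giving $\phi = \partial\psi'$ as needed.

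There is no real obstacle here beyond assembling the pieces; the heavy lifting has been done in constructing $K_n$ and $Q_n$. The only subtle point worth a brief verification is the assertion that $Q_n$ restricts to the identity on $\LL_c^n(\mathcal{A},\mathcal{W}:\mathcal{B})$, which is simply the statement that $Q_n$ is a linear projection onto that subspace (built into Lemma \ref{lemma3.2.6}). Together, surjectivity and injectivity show that $\iota^*$ is an isomorphism, completing the proof.
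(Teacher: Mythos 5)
Your proposal is correct and follows essentially the same route as the paper: surjectivity via Lemma \ref{lemma3.2.4} (replacing a cocycle $\phi$ by the cohomologous $\mathcal{B}$-multimodular cocycle $\phi-\partial(K_n\phi)$) and injectivity via the identity $\phi = Q_n\phi = Q_n\partial\psi = \partial Q_{n-1}\psi$ from Lemma \ref{lemma3.2.6}. You have merely spelled out the details the paper leaves implicit, so nothing further is needed.
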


	\begin{proof}
	Clearly, the natural embedding $\LL_c^n(\mathcal{A},\mathcal{W}: \mathcal{B}) \hookrightarrow \LL_c^n(\mathcal{A},\mathcal{W})$ induces a homomorphism $H_c^n(\mathcal{A},\mathcal{W}: \mathcal{B}) \rightarrow H_c^n(\mathcal{A},\mathcal{W})$. 
	By Lemma \ref{lemma3.2.4} this map is surjective. 
	Furthermore, if $\phi \in \LL_c^n(\mathcal{A},\mathcal{W}:\mathcal{B})$ and $\psi \in \LL_c^n(\A,\mathcal{W})$ is such that $\phi = \partial\psi$, then with $Q_n$ as in Lemma \ref{lemma3.2.6},
	$$
	\phi = Q_n\phi = Q_n\partial\psi = \partial Q_{n-1} \psi \h \text{where} \h Q_{n-1} \psi \in \LL_c^{n-1}(\mathcal{A},\mathcal{W}:\mathcal{B})
	$$
	and so our map is injective. 	\qedhere
	\end{proof}
	
	\begin{remark}
	If our averaging operator, i.e. the ``integral" over the unitary group $\mathcal{U}$, converges in the weak* topology of the dual normal $\mathcal{A}$-bimodule $\WW$ to an element in the subdual $\VV$ of $\WW$ for all $\phi \in \LL^n(\mathcal{A},\WW)$ then we may replace $\WW$ with $\VV$ everywhere above.
	\end{remark}

%%%%%%%%%%%%%%%%%%%%%%%%%%%%%%%%%%%%%%%%%%%%%%%%%%%%%%%

\section{\centering A Relation Between Cohomologies}
\label{sec5}
	
	The goal of this section is to prove the following theorem.
	\begin{theorem}
	\label{main 2}
	The natural map $H^n_w(\cu{X}) \to H^n_c(\cu{X})$ is surjective if and only if
	$$ H^n_c(\cu{X}) = 0.$$
	\end{theorem}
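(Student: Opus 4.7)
The direction $H^n_c(\cu{X}) = 0 \Rightarrow$ surjectivity is immediate, since any linear map into the zero vector space is vacuously surjective. For the converse, it suffices to prove the stronger claim that every cocycle $\phi \in \LL^n_w(\cu{X})$ is a coboundary in $\LL^n_c(\cu{X})$; granted this, surjectivity forces $H^n_c(\cu{X}) = 0$, because every class then has a weakly continuous representative, which by the claim is zero in $H^n_c$.

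The first step of my plan is a reduction to weakly continuous $\ell^\infty(X)$-multimodular cocycles. The unitary group $\mathcal{U}$ of $\ell^\infty(X)$ is abelian and, in the weak-$*$ topology, is a compact topological group isomorphic to $\mathbb{T}^X$; it therefore admits a normalized Haar measure. Running the Sinclair--Smith recursion \eqref{J_1}--\eqref{K_n} with Haar measure in place of a general invariant mean realizes $J_k$, $G_k$, and $K_n$ as honest Bochner-type integrals over $\mathcal{U}$. Because the integrand is uniformly norm-bounded over $\mathcal{U}$ and weak-$*$ continuous in the remaining multilinear arguments (as $\phi$ is), dominated convergence shows $K_n$ preserves separate weak-$*$ continuity. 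Replacing $\phi$ by $\phi - \partial(K_n\phi)$, we may assume $\phi \in \LL^n_w(\cu{X})$ is also $\ell^\infty(X)$-multimodular.

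Next I would extract a scalar simplicial cocycle. Applying $\ell^\infty$-multimodularity with the diagonal projections $p_x = e_{xx}$ to matrix units forces
$$
\phi(e_{x_0 x_1}, \ldots, e_{x_{n-1} x_n}) = \alpha(x_0, \ldots, x_n)\, e_{x_0 x_n}
$$
for a unique bounded function $\alpha \colon X^{n+1} \to \C$ with $\|\alpha\|_\infty \leq \|\phi\|$; weak-$*$ continuity then determines $\phi$ from $\alpha$. Evaluating $\partial\phi = 0$ on consecutive matrix units and reading off the coefficient of $e_{x_0 x_{n+1}}$ yields the simplicial cocycle identity
$$
\sum_{j=0}^{n+1}(-1)^j \alpha(x_0, \ldots, \widehat{x_j}, \ldots, x_{n+1}) = 0,
$$
and fixing a basepoint $x_* \in X$, the contracting-homotopy formula $\beta(x_0, \ldots, x_{n-1}) := \alpha(x_*, x_0, \ldots, x_{n-1})$ produces a scalar primitive with $\delta\beta = \alpha$.

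The main obstacle is the final step: upgrading the scalar primitive $\beta$ to an element $\psi \in \LL^{n-1}_c(\cu{X})$ with $\partial\psi = \phi$. A bounded scalar function on $X^n$ does not generally define a bounded $(n{-}1)$-linear map via the matrix-unit formula (compare the analogous phenomenon for unbounded Schur multipliers), so $\psi$ cannot simply be declared to be the multimodular map with symbol $\beta$. My plan is to construct $\psi$ by approximation on finite-rank compressions: for each finite $F \subset X$ with $p_F = \sum_{x\in F} e_{xx}$, the compression of $\phi$ to $p_F\cu{X}p_F \cong M_{|F|}(\C)$ is a cocycle on a finite matrix algebra, hence a coboundary $\partial\psi_F$ with $\|\psi_F\|$ uniformly bounded in $F$. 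Extending each $\psi_F$ to $\cu{X}^{n-1}$, a Banach--Alaoglu argument produces a weak-$*$ limit candidate $\psi$, and the multilinear Braga--Farah lemma (Theorem \ref{bf corollary}) supplies the uniform propagation control needed to guarantee that $\psi$ takes values in $\cu{X}$ rather than merely in $\BB(\ell^2(X))$; bounded geometry of $X$ is essential for this last step. The crux of the proof is precisely this transfer from a scalar combinatorial primitive to a norm-bounded, $\cu{X}$-valued primitive, which is what the multilinear Braga--Farah machinery of Section \ref{sec3} is designed to enable.
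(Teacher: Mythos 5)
Your overall shape for the ``only if'' direction (reduce to weakly continuous cocycles, average over the unitary group of $\ell^\infty(X)$ to make them multimodular, then kill the multimodular part) matches the paper's, but there is a genuine gap at the first nontrivial step. When you replace $\phi$ by $\phi-\partial(K_n\phi)$, you need $\partial(K_n\phi)$ to be a coboundary in $\LL_c^n(\cu{X})$ \emph{with coefficients in $\cu{X}$}, i.e.\ you need $K_n\phi$ to take values in $\cu{X}$. The averaging in $K_n$ (whether via an invariant mean or via Haar measure on $\T^X$) converges only in the weak-$*$ topology of $\BB(\ell^2(X))$, and $\cu{X}$ is not weak-$*$ closed, so a priori $K_n\phi\in\LL_c^{n-1}(\cu{X},\BB(\ell^2(X)))$ only. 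Showing $K_n\phi\in\LL_c^{n-1}(\cu{X})$ is precisely the technical heart of the paper's proof: one writes each $\ell^\infty(X)$-factor in the integrand as a weak-$*$ convergent sum $\sum_x f(x)p_x$, uses the weak continuity of $\phi$ to verify separate symmetric summability, and applies the multilinear Braga--Farah result (Theorem \ref{bf corollary}) to get a propagation bound uniform over the unitary group, whence the average is $\epsilon$-$r$-approximable by Lemma \ref{avg prop lem}. You invoke Braga--Farah only at the very end and for a different purpose; without it here, your reduction breaks. Note that this cannot be deferred: even if your final step produced a $\cu{X}$-valued primitive $\psi$ of the multimodular cocycle, the primitive of the original $\phi$ would be $\psi+K_n\phi$, so the $\cu{X}$-valuedness of $K_n\phi$ is unavoidable.

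Your last two steps are also both unnecessary and incomplete. Once the cocycle is $\ell^\infty(X)$-multimodular, the paper observes (Lemma \ref{multimodularequal}) that $\ell^\infty(X)$-multimodular maps automatically take values in $\cu{X}$, so $\LL_c^n(\cu{X},\BB(\ell^2(X)):\ell^\infty(X))=\LL_c^n(\cu{X}:\ell^\infty(X))$, and then Theorem \ref{theorem3.2.7} together with the Kadison--Ringrose vanishing for $\BB(\ell^2(X))$ gives $H_c^n(\cu{X}:\ell^\infty(X))=0$ outright; no scalar simplicial reduction or compression argument is needed. As for your compression scheme itself: comparing $\partial\tilde\psi_F$ with $\phi$ produces error terms of the form $\psi_F(\dots,p_Fa_j(1-p_F)a_{j+1}p_F,\dots)$, which are controlled only by $\|\psi_F\|\prod_i\|a_i\|$; since $p_F\to 1$ only strongly and not in norm, these do not tend to zero, so the Banach--Alaoglu limit is not shown to satisfy $\partial\psi=\phi$. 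The scalar cocycle $\alpha$ and its cone primitive $\beta$ are correct as far as they go, but, as you yourself note, they do not produce a bounded multilinear map, and the compression argument as sketched does not close that gap.
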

	
	Note that, by \cite{10.2307/1970433} Lemma 3, all bounded derivations on any C*-algebra are weakly continuous.
	Thus, the natural map $H^1_w(\mathcal{A}) \to H^1_c(\mathcal{A})$ is automatically surjective.
	However, this does not seem to be known for $n \geq 2$.	
	Additionally, our proof of Theorem \ref{main 2} depends on the underlying C*-algebra being a uniform Roe algebra.
	That being said, Theorem \ref{main 2} strictly generalizes Rufus Willett and the author's work in \cite{lorentz2020}.

	For notational convenience throughout we let: $A=\cu{X},~\BB=\BB(\ell^2(X))$, and $\ell = \ell^{\infty}(X)$.
	As a first step towards showing Theorem \ref{main 2} we show that $H_c^n(A,\BB:\ell) \iso H_c^n(A:\ell)$ in the following lemma.
	
	\begin{lemma}
	\label{multimodularequal}
	Let $\phi \in \LL^n(A,\BB:\ell)$. 
	Then $\phi$ takes image in the uniform Roe algebra; that is,
	$\LL^n(A,\BB:\ell) =\LL^n(A:\ell)$.
	\end{lemma}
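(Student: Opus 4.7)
The strategy is to show that $\ell$-multimodularity, combined with bounded geometry of $X$, forces $\phi$ to send finite-propagation inputs to finite-propagation outputs, after which norm continuity gives the result for arbitrary $a_i \in A$.

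Fix $u, v \in X$ and set $p_u := \chi_{\{u\}}$, $p_v := \chi_{\{v\}}$, which are elements of $\ell$. The outer multimodularity identities give
\[
p_u \phi(a_1, \ldots, a_n) p_v = \phi(p_u a_1, a_2, \ldots, a_n p_v).
\]
Suppose now that each $a_i$ has propagation at most $r_i$. Bounded geometry of $X$ implies that the ball $B(u, r_1)$ is finite, so
\[
p_u a_1 = \sum_{y_1 \in B(u, r_1)} (a_1)_{u y_1}\, e_{u y_1}
\]
is a \emph{finite} sum, which multilinearity pulls outside $\phi$. Using $e_{u y_1} = e_{u y_1} p_{y_1}$ together with the middle multimodularity identity transfers $p_{y_1}$ from the first slot into the second, converting $a_2$ into the finite sum $p_{y_1} a_2 = \sum_{y_2 \in B(y_1, r_2)} (a_2)_{y_1 y_2}\, e_{y_1 y_2}$. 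Iterating this ``push and split'' step $n-1$ times yields
\[
\phi(a_1, \ldots, a_n)_{uv}\, e_{uv} = \sum (a_1)_{u y_1} (a_2)_{y_1 y_2} \cdots (a_n)_{y_{n-1} v}\, \phi(e_{u y_1}, e_{y_1 y_2}, \ldots, e_{y_{n-1} v}),
\]
where the sum is over tuples $(y_1, \ldots, y_{n-1})$ with $d(y_{i-1}, y_i) \leq r_i$ (setting $y_0 = u$, $y_n = v$). Any nonzero term therefore requires $d(u, v) \leq r_1 + \cdots + r_n$, so $\phi(a_1, \ldots, a_n)$ has propagation at most $r_1 + \cdots + r_n$ whenever each $a_i \in \cualg{r_i}{X}$.

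For general $a_i \in A$, separate norm continuity of $\phi$ together with uniform boundedness gives joint norm continuity on bounded sets. Given $\epsilon > 0$, choose finite-propagation $b_i$ with $\norm{a_i - b_i}$ small enough that $\norm{\phi(a_1, \ldots, a_n) - \phi(b_1, \ldots, b_n)} < \epsilon$; by the previous paragraph $\phi(b_1, \ldots, b_n)$ has finite propagation, so $\phi(a_1, \ldots, a_n)$ is $\epsilon$-$r$-approximated for some $r$. Hence $\phi(a_1, \ldots, a_n) \in A$, as required. The only substantive step is the iterated pushing of diagonal projections through the slots of $\phi$; bounded geometry is crucial because it keeps every intermediate decomposition finite, so that multilinearity alone (rather than some form of weak continuity that $\phi$ need not possess) suffices to pull sums outside.
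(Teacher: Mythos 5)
Your proof is correct and follows essentially the same route as the paper's: first reduce to finite-propagation inputs via the telescoping norm estimate coming from boundedness of $\phi$, then use $\ell^\infty(X)$-multimodularity to push diagonal projections through the slots of $\phi$ and conclude that the output has propagation at most $r_1+\cdots+r_n$. The only difference is cosmetic: you decompose $p_u a_i$ into a finite sum of matrix units (invoking bounded geometry to keep these sums finite so that multilinearity suffices), whereas the paper inserts the ball projections $p_{B_x(kr)} \in \ell^\infty(X)$ directly into each slot, which avoids any decomposition and yields the slightly cruder but equally adequate bound $n\cdot\max_i r_i$.
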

	
	\begin{proof}
	Let $\phi \in \LL^n(A,\BB:\ell),~(x_1,\dots,x_n) \in A^n$, and $0<\epsilon\leq1$ be given.
	Set $M=\maxset{\norm{x_i}}+1$ and note that since each $x_i\in A$ we may write each $x_i$ as 
	$$
	x_i=a_i+b_i \h\text{where}\h a_i\in \cualg{r_i}{X} \h\text{and}\h \norm{b_i} 
	< \minset{\frac{\epsilon}{n\norm{\phi}M^n},~\epsilon}.
	$$
	Moreover,
	we have that $\norm{a_i}<M$.
	Next, since $\phi$ is multilinear we may write 
	$$
	\phi(x_1,\dots,x_n)
	=\phi(a_1,\dots,a_n) + \phi(a_1,\dots,a_{n-1},b_n) + \phi(a_1, \dots, a_{n-2}, b_{n-1},x_n) + 
	$$
	$$
	\dots +\phi(a_1,b_2,x_3,\dots,x_n) + \phi(b_1,x_2,\dots,x_n)
	$$
	Observe that every term but the first in this expansion has a $b_i$ in a single coordinate and either 
	$a_i$'s or $x_i$'s in the remaining coordinates.
	Thus,
	the norm for each of the terms with a $b_k$ in the $k$th coordinate is bounded by 
	$$
	\norm{\phi}\opint{\prod_{i=1}^nM}\norm{b_k} < \frac{\epsilon}{n}
	$$
	Hence,
	it is enough to show that $\phi(a_1,\dots,a_n) \in \cualg{n\cdot r}{X}$ where $r=\maxset{r_i}$.

	To show this
	let $p_x$ be the projection onto the span of the Dirac mass at $x$, and
	let $B_x(r)$ denote the closed ball of radius $r$ centered at $x$.
	We then define
	$$
	p_{B_x(r)} := \sum_{k \in B_x(r)}p_k.
	$$
	Note that, the sum defining $p_{B_x(r)}$ is finite for any given $r \in \N$ since $X$ has bounded geometry.
	Next, for any fixed $x\in X$, 
	\begin{equation}
	\label{sum projection}
	p_x a_1 = p_x a_1 p_{B_x(r)} \h \text{and} \h p_{B_x((i-1)\cdot r)} a_i = p_{B_x((i-1)\cdot r)} a_i p_{B_x(i\cdot r)}
	\end{equation}
	since each $a_i$ has propagation less than $r$. 
	Next, fix $x,y \in X$ such that $d(x,y) >n \cdot r$ and observe that
	$$
	p_x \phi(a_1, \dots, a_n) p_y = \phi(p_xa_1, \dots, a_np_y)
	$$
	$$
	=\phi(p_xa_1 p_{B_x(r)} , \dots, a_np_y) = \phi(p_xa_1 p_{B_x(r)} ,p_{B_x(r)}a_2, \dots, a_np_y)
	$$
	where on the left hand we have used line (\ref{sum projection}) and on the right hand side we use that $\phi$ is $\ell^{\infty}(X)$-multimodular.
	
	Continuing this process $n-1$ times we arrive at
	$$
	p_x\phi(a_1, \dots, a_n)p_y
	$$
	$$
	=\phi(p_xa_1 p_{B_x(r)} ,\dots,  p_{B_x((i-1) \cdot r)} a_i  p_{B_x(i\cdot r)}   \dots,p_{B_x((n-1)\cdot r)} a_np_y).
	$$
	
	Observe that for any $k\in B_x((n-1)\cdot r)$,
	$$
	d(k,y) \geq d(x,y)-d(x,k) \geq d(x,y) -  (n-1)\cdot r > n\cdot r - (n-1)\cdot r = r,
	$$
	and so 
	$$
	p_{B_x((n-1)\cdot r)} a_np_y = 0 \h \text{since} \h a_n \in \cualg{r}{X}.
	$$
	Thus, 
	$$
	p_x\phi(a_1, \dots, a_n)p_y = 0
	$$
	and since $x,y \in X$ were an arbitrary pair satisfying $d(x,y) > n\cdot r$, we have that $\phi(a_1, \dots, a_n) \in \cualg{n\cdot r}{X}$ as was to be shown.
	\end{proof}

	\begin{remark}
	\label{key}
By Lemma \ref{multimodularequal} and Theorem \ref{theorem3.2.7} we know that 
$$
H_c^n(A:\ell)\iso H_c^n(A,\BB:\ell) \iso H_c^n(A,\BB).
$$
In Sinclair and Smith \cite{sinclair1995hochschild} Theorem 3.3.1 they show that $H_c^n(A,\BB) \iso H_c^n(\BB)$, which we also show in the sequel, Remark \ref{all of these too}.
Hence, by Theorem \ref{iszero} $H_c^n(A:\ell) = 0$. 
Thus, we need only show that the homomorphism 
$$
H_c^n(A:\ell) \to H_c^n(A) \text{ induced by the inclusion } \LL_c^n(A:\ell) \to \LL_c^n(A)
$$ 
is a surjection. 
By Lemma \ref{lemma3.2.4}, averaging over the unitary group of $\ell^{\infty}(X)$, we know that for a cocycle $\phi\in\LL_c^n(A)$, $(\phi-\partial K_n\phi) \in \LL_c^n(A:\ell)$.
Thus, to show that $H_c^n(A:\ell) \to H_c^n(A)$ is a surjection it suffices to show that $K_n\phi \in \LL_c^{n-1}(A)$ so that  $\partial K_n\phi$ is a coboundary in $\LL_c^n(A)$, for then 
$$
H_c^n(A:\ell) \ni [\phi-\partial K_n\phi] = [\phi] \text{ in } H_c^n(A).
$$
Furthermore, since $H_w^n(A) \to H_c^n(A)$ is a surjection by the hypothesis of Theorem \ref{main 2}, we may assume that $\phi \in \LL_w^n(A)$.
	\end{remark}

	Before we embark on the proof that $H^n_c(A) = 0$ if the map $H_w^n(A) \to H_c^n(A)$ is a surjection for a general $n$, we show some properties of the map $K_n$ arising from its construction and set some notation.

	\begin{lemma}
	$K_n$ is the sum of $\sum_{k=1}^n 2^{k-1}$ terms (before applying the boundary operator), where the first term is $J_1$, the next terms are the  $n$-alternating sum of the maps $G_k$, and the remaining terms for $n\geq2$ are of the form
	\begin{equation}
	\label{likethis}
	G_{j_i}\partial \dots G_{j_1}\partial J_1 \h \text{or} \h G_{j_i}\partial \dots G_{j_2}\partial G_{j_1} \h \text{for} \h
	j_i > j_{i-1} > \dots > j_1.
	\end{equation}
	
	\end{lemma}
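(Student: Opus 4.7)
The plan is to induct on $n$, unfolding the recursion $J_{k+1}=J_k+(-1)^k G_k(I-\partial J_k) = J_k + (-1)^k G_k - (-1)^k G_k\partial J_k$ and keeping careful bookkeeping on (i) the total number of summands, (ii) the ``shape'' of each summand, and (iii) the strictly decreasing index condition on consecutive $G$'s.

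For the base case $n=1$, $K_1 = J_1$ is itself a single summand, matching $\sum_{k=1}^{1} 2^{k-1} = 1$; it is of the prescribed form vacuously (the alternating $G$-sum is empty and there are no higher-order terms). For the inductive step, assume $J_n$ is a sum of $\sum_{k=1}^{n} 2^{k-1}=2^n-1$ terms, each being either $J_1$, a pure $(-1)^k G_k$ with $1\leq k\leq n-1$, or one of the two composite types in \eqref{likethis} with $n-1 \geq j_i > j_{i-1}> \cdots > j_1 \geq 1$. Expanding $J_{n+1}$ via the recursion shows that passing from $J_n$ to $J_{n+1}$ does three things: it keeps all terms of $J_n$, it adds the single new pure summand $(-1)^n G_n$, and it adds the $|J_n|$ terms obtained by prepending $-(-1)^n G_n\partial$ to each summand of $J_n$. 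The total count becomes $|J_{n+1}| = 2|J_n|+1 = 2(2^n-1)+1 = 2^{n+1}-1 = \sum_{k=1}^{n+1}2^{k-1}$, as required.

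I then need to check that the new summands fit the claimed shape. The added $(-1)^n G_n$ joins the alternating sum $\sum_{k=1}^{n}(-1)^k G_k$, which is precisely the ``next terms'' part for $J_{n+1}$. For a term of $J_n$ already of the form $G_{j_i}\partial\cdots G_{j_1}$ or $G_{j_i}\partial\cdots G_{j_1}\partial J_1$, prepending $G_n\partial$ produces $G_n\partial G_{j_i}\partial\cdots G_{j_1}(\partial J_1)$; since the inductive hypothesis gives $j_i\leq n-1 < n$, the strict inequality $n > j_i > \cdots > j_1$ holds, so the term again has the required form, now with $i+1$ factors of $G$. Similarly, prepending $G_n\partial$ to the $J_1$ summand yields $G_n\partial J_1$, which is the $i=1$ case of the first form in \eqref{likethis}, and prepending to a pure $(-1)^k G_k$ yields (up to sign) $G_n\partial G_k$, the $i=2$ case of the second form with $j_2 = n > j_1 = k$. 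These exhaust the ways a new summand can arise.

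The only subtle point is keeping the two bookkeeping invariants aligned, namely that (a) the maximum $G$-index appearing in any summand of $J_n$ is at most $n-1$, and (b) every composite term has the two factorization patterns in \eqref{likethis}. Both are preserved automatically by the recursion since the newly introduced factor is $G_n$ on the far left, with index strictly greater than any previously present index. There is no deep obstacle; the work is purely combinatorial, and the argument is essentially a careful expansion of the recursive definition of $K_n=J_n$ in \eqref{K_n}.
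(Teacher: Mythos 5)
Your proof is correct and follows essentially the same route as the paper: both arguments induct on the recursion $J_{k+1}=J_k+(-1)^kG_k(I-\partial J_k)$, derive the count from the recurrence $\abs{J_{k+1}}=2\abs{J_k}+1$, and observe that the newly prepended index is strictly larger than every index already present, so the decreasing-index form in \eqref{likethis} is preserved. Your version is somewhat more explicit in enumerating how each type of new summand arises, but the substance is identical.
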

	
	\begin{proof}
	Since $K_n$ is defined by $K_n = J_n$ where $J_{k+1} = J_k + (-1)^k (G_k - G_k\partial J_k)$ we will induct on $k$.
	
	Let $D_k = (G_k - G_k\partial J_k)$, then
	$$
	J_{k+1} = J_k +(-1)^k D_k
	$$
	$$
	= J_{k-1} + (-1)^{k-1}D_{k-1} + (-1)^k D_k
	$$
	$$
	= J_1 + \sum_{j=1}^k (-1)^j D_j
	$$
	$$
	= J_1 + \sum_{i=1}^k (-1)^i G_i + \sum_{j=1}^k (-1)^{j+1}G_j \partial J_j
	$$
	Note that, since $j \leq k$ for all $j$ in the last summation, by inductive hypothesis our terms are of the form of line (\ref{likethis}).

	Lastly, using the recursive definition of $J_{k+1}$ and letting $\abs{J_{k+1}}$ be the number of terms of $J_{k+1}$,we have
	$$
	\abs{J_{k+1}} = \abs{J_k} + \abs{G_k} + \abs{G_k\partial J_k} = 2\abs{J_k} + 1 = 2\sum_{j=1}^k 2^{j-1} +1 = \sum_{j=1}^{k+1} 2^{j-1}
	$$
	as was to be shown.
	\end{proof}

	\begin{lemma}
	\label{finite K_n sum}
	Let $\phi\in \LL^n(A)$ and let $(a_1, \dots, a_n), ~ a_i \in A$ be given. Then
	$$
	(G_{j_i}\partial \dots G_{j_1}\partial J_1\phi)(a_1,\dots, a_{n-1}) \h \text{and} \h 
	(G_{j_i}\partial \dots G_{j_2}\partial G_{j_1}\phi)(a_1, \dots, a_{n-1}) 
	$$
	are both finite sums of terms of the form
	$$
	\scalebox{0.85}{$\ds\int_{\UU}\dots\int_{\UU}\prod_{k=1}^N(c_{1,k}v_{1,k})\phi\Big(\prod_{k=1}^Nc_{2,k}v_{2,k}, \dots, \prod_{k=1}^Nc_{n,k}v_{n,k}\Big)\prod_{k=1}^Nc_{n+1,k}v_{n+1,k}~ \dd\mu(u_{j_i})\dots \dd\mu(u_{j_1})$}
	$$
	where each $c_{\ell,k} $ is fixed as one of the $a_j$'s or 1, and $v_{\ell,k}\in\UU$ the unitary group of $\ell^{\infty}(X)$.
	Additionally $N <\infty$.
	\end{lemma}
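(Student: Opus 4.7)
My plan is to prove the lemma by induction on the total number of operators $\partial$, $G_k$, and $J_1$ appearing in the composition, the goal being that each single application of $\partial$ or $G_k$ to a finite sum of terms of the claimed form produces another finite sum of terms of the claimed form. The two base cases are immediate from the definitions in \eqref{J_1} and \eqref{G_k}: $J_1\phi$ is a single $\UU$-integral of $\phi$ whose outer left factor is $u^*$ (that is, $c_{1,1}=1,\ v_{1,1}=u^*$), whose first argument is $u$ (that is, $c_{2,1}=1,\ v_{2,1}=u$), and whose remaining arguments are $a_1,\dots,a_{n-1}$, with outer right factor $1$; this fits the template after padding the trivial slots with $(1,1)$-pairs as needed. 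Similarly $G_{j_1}\phi$ is a single integral whose slot $j_1$ becomes the length-one product $a_{j_1}\cdot u^*$ and whose slot $j_1+1$ becomes $1\cdot u$, both of which lie in the allowed form.

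For the inductive step I take $\psi$ to be a finite sum of terms of the stated form with input variables $b_1,\dots,b_n$ and analyze $\partial\psi$ and $G_k\psi$ separately. The coboundary $\partial\psi(a_1,\dots,a_{n+1})$ is a signed sum of three kinds of expressions: $a_1\psi(a_2,\dots,a_{n+1})$, $\psi(a_1,\dots,a_n)a_{n+1}$, and $\psi(a_1,\dots,a_ja_{j+1},\dots,a_{n+1})$ for $1\le j\le n$. The first two act on each summand of $\psi$ by prepending the pair $(a_1,1)$ to the outer left product $L$, respectively appending $(a_{n+1},1)$ to the outer right product $R$, while the inputs $b_i$ get renamed to the corresponding $a_{i+1}$ or $a_i$. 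The third substitutes the product $a_ja_{j+1}$ into the slot $b_j$ wherever $b_j$ occurs as a $c$-factor; I absorb this into the template by splitting the single pair $(b_j,v)$ into two consecutive pairs $(a_j,1),(a_{j+1},v)$. In every case the result is again a finite sum of terms of the stated form, with $N$ increased by at most one and finitely many new summands. Analogously, $G_k\psi$ adds one fresh outer $\UU$-integral and substitutes $b_k\mapsto a_ku^*$, $b_{k+1}\mapsto u$, which replaces each occurrence of $(b_k,v)$ by $(a_k,u^*)$ (equivalently $(a_k,1),(1,u^*)$) and each $(b_{k+1},v)$ by $(1,u)$, again keeping us in the stated class.

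Iterating these two inductive observations handles both of the compositions $G_{j_i}\partial\cdots G_{j_1}\partial J_1\phi$ and $G_{j_i}\partial\cdots G_{j_2}\partial G_{j_1}\phi$ in the statement. The bound $N<\infty$ is automatic because each operator application increases $N$ and the total number of summands by only a bounded amount, and there are finitely many operators in the composition. The only step requiring even mild attention is the merging substitution $b_j\mapsto a_ja_{j+1}$ hidden inside $\partial$, where the $(c,v)$-expansion of a slot must be adjusted; everything else is routine bookkeeping on indices, and there is no genuine obstacle.
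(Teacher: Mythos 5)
Your argument is correct and is essentially the paper's own proof, just organized inside-out (starting from $J_1\phi$ or $G_{j_1}\phi$ and showing that each application of $\partial$ or $G_k$ preserves the template) rather than outside-in as the paper does; both rest on the same bookkeeping observations that $\partial$ contributes finitely many terms which either multiply on the outside or merge two adjacent slots, and that $G_k$ adds one integral and modifies two slots by unitaries, with the averaging operator pulled through via its module property. One cosmetic slip: in the $G_k$ step the replacement of $(b_k,v)$ should be $(a_k,u^*v)$ rather than $(a_k,u^*)$ --- still of the allowed form since $u^*v\in\UU$ --- but this does not affect the argument.
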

	
	\begin{proof}
	Consider
	$$
	(G_{j_i}\partial \dots G_{j_1}\partial J_1\phi)(a_1,\dots, a_{n-1}).
	$$
	Observe that, after applying $G_{j_i}$, in the $l$'th coordinate we will have: $a_l,~ a_lu_{j_i}^*,$ or $u_{j_i}$.
	Note that we may write this coordinate as $c_lv_l$ where $c_l$ is fixed as 1 or $a_l$ and $v_l = 1,~u_{j_i},$ or $u_{j_i}^*$.
	Also note that $v_l\in\UU$.
	Thus we may write,
	$$
	(G_{j_i}\partial \dots G_{j_1}\partial J_1\phi)(a_1,\dots, a_{n-1})
	$$
	\begin{equation}
	\label{eq4}
	= \int_{\UU}(\partial G_{j_{i-1}} \dots \partial G_{j_1}\partial J_1\phi)(c_1v_1,\dots, c_nv_n)~\dd\mu(u_{j_i}).
	\end{equation}
	Next, since our averaging operator is finitely additive and the boundary operator introduces a finite number of terms, we may `bring in' the averaging operator to each term. 
	Additionally, since the boundary operator just moves one of the arguments to the coordinate to the left, in front of, or behind the map, we may write (after reindexing) a typical term obtained from applying the boundary map in line (\ref{eq4}) as
	$$
	\scalebox{0.98}{$\ds\int_{\UU} c_0v_0( G_{j_{i-1}}\partial \dots  G_{j_1}\partial J_1\phi)(c_1v_1c_2v_2,\dots, c_{2n}v_{2n}c_{2n+1}v_{2n+1})c_{2n+2}v_{2n+2}~\dd\mu(u_{j_i})$}
	$$
	where $c_k \in \set{1,a_1,\dots,a_{n-1}}$ and $v_k\in \UU$ (note that the $c_k$'s will be fixed differently for each term).
	Applying this process again it is not hard to see that after applying $G_{j_i}\partial G_{i-1} \partial$ we will have a finite sum of terms of the form
	$$
	\scalebox{0.80}{$\ds\int_{\UU}\int_{\UU}\prod_{k=1}^4(c_{0,k}v_{0,k})(G_{j_{i-2}}\dots\partial J_1\phi)\Big(\prod_{k=1}^4c_{1,k}v_{1,k}, \dots, \prod_{k=1}^4c_{n,k}v_{n,k}\Big)\prod_{k=1}^4c_{n+1,k}v_{n+1,k}~ \dd\mu(u_{j_i}) \dd\mu(u_{j_{i-1}})$}.
	$$
	Note that the application of the $J_1$ map does not change our technique and eventually this process must end.
	Thus, the conclusion holds and we are done.
	\end{proof}

	\begin{definition}
	\label{fixing}
	For each term obtained in the previous lemma the set of $\set{c_{l,k}}$ is fixed for that term. 
	We shall call this a \textit{partial coordinate fixing} of $\phi$.
	\end{definition}
	
	\begin{lemma}
	\label{general phi}
	Let $(a_1, \dots, a_n), ~ a_i \in A$ and $\phi \in \LL_w^{n}(A)$ be given.
	Consider
		\begin{equation}
		\label{phi general}
		y_0\phi(y_1, \dots , y_n) y_{n+1}, \h y_i = \prod_{j=1}^{N_i}c_j f_{(i,j)} , \h \text{where } N_i < \infty
		\end{equation}
	where $f_{(i,j)}$ is any element in $\opint{\ell^{\infty}(X)}_1$, and each $c_j = a_k$ or $1$ is fixed. 
	Then for all $\epsilon>0$ there exists an $r>0$ (depending on the partial coordinate fixing of $\phi$) such that $y_0\phi(y_1, \dots , y_n) y_{n+1}$ can be $\epsilon$-$r$-approximated.
	
	\end{lemma}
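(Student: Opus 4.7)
The plan is to recognize $y_0\phi(y_1,\dots,y_n)y_{n+1}$ as a single value of a separately symmetrically summable family, then invoke Theorem~\ref{bf corollary}. We parameterize each $f_{(i,j)} \in (\ell^{\infty}(X))_1$ by its values: $f_{(i,j)} = \sum_{x\in X} f_{(i,j)}(x)\, p_x$, where $p_x$ is the rank-one projection onto $\delta_x$ (as in Lemma~\ref{multimodularequal}) and the sum converges in WOT with coefficients in $\D$. Set $N := \sum_{i=0}^{n+1} N_i$, let $I_1,\dots,I_N$ each be a copy of $X$, and fix a bijection identifying $k \in \{1,\dots,N\}$ with a pair $(i,j)$. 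For $(x_1,\dots,x_N) \in \prod_k I_k$ define
$$
a_{(x_1,\dots,x_N)} := y'_0\,\phi(y'_1,\dots,y'_n)\,y'_{n+1},
$$
where $y'_i := \prod_{j=1}^{N_i} c_j\, p_{x_{(i,j)}}$ is obtained from $y_i$ by substituting $p_{x_{(i,j)}}$ for $f_{(i,j)}$. Since each $c_j$ and each $p_x$ lies in $A = \cu{X}$ and $\phi$ takes values in $A$, we have $a_{(x_1,\dots,x_N)} \in A$, with $\|a_{(x_1,\dots,x_N)}\|$ uniformly bounded by a constant depending only on $\|\phi\|$ and the fixed $c_j$'s.

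Next we verify that this family is $N$-separately symmetrically summable. Fix $k$ and arbitrary $\lambda^{(k')} \in \D^{I_{k'}}$ for $k'\neq k$. The partial sums $\sum_{x \in F} \lambda^{(k)}_x p_x$ over finite $F \subseteq X$ are bounded by $1$ and converge ultraweakly to $\sum_{x \in X} \lambda^{(k)}_x p_x \in (\ell^{\infty}(X))_1$. Separate ultraweak continuity of multiplication in $\BB(\ell^2(X))$ promotes this to ultraweak convergence of the corresponding $y'_i$'s to the operator $y''_i$ obtained by substituting $\sum_x \lambda^{(k')}_x p_x$ for each $f_{(i',j')}$; separate ultraweak–weak$^*$ continuity of $\phi \in \LL^n_w(A)$ and of left/right multiplication by the fixed $y''_0, y''_{n+1}$ then yields WOT-convergence of the whole expression (the net is bounded, so WOT agrees with ultraweak convergence here) to $a_{(\lambda^{(1)},\dots,\lambda^{(N)})} := y''_0\,\phi(y''_1,\dots,y''_n)\,y''_{n+1} \in A$.

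Applying Theorem~\ref{bf corollary} to this family gives, for the prescribed $\epsilon > 0$, a single $r > 0$ such that every $a_{(\lambda^{(1)},\dots,\lambda^{(N)})}$ is $\epsilon$-$r$-approximated. Taking $\lambda^{(k)}_x := f_{(i,j)}(x)$ under the bijection $k \leftrightarrow (i,j)$ recovers $y_0\phi(y_1,\dots,y_n)y_{n+1}$ as one of these $a_{(\lambda^{(1)},\dots,\lambda^{(N)})}$, which completes the proof. The main technical obstacle will be the continuity chain in the symmetric-summability verification: one must interleave ultraweak continuity of multiplication with the separate ultraweak–weak$^*$ continuity of $\phi$ while the remaining $N-1$ coordinates are already frozen to arbitrary $\D^X$-tuples rather than to individual $p_x$'s. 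That the resulting $r$ depends on the partial coordinate fixing is expected, since the defining family itself depends on the fixed $c_j$'s.
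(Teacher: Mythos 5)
Your proposal is correct and follows essentially the same route as the paper: both arguments parameterize each $f_{(i,j)}$ by its coefficient tuple in $\D^X$ via $f=\sum_x f(x)p_x$, verify that the resulting family of operators indexed by products of rank-one projections is separately symmetrically summable (using boundedness, the coincidence of the weak and ultraweak topologies on bounded sets, and the separate ultraweak--weak* continuity of $\phi$), and then invoke Theorem~\ref{bf corollary} to obtain a single $r$ working for all tuples, specializing at the end to recover $y_0\phi(y_1,\dots,y_n)y_{n+1}$. Your write-up is, if anything, slightly more explicit than the paper's about the continuity chain through the products and the outer multiplications.
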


	\begin{proof}
	Let $p_x \in \BB(\ell^2(X))$ be the rank one projection onto the span of the Dirac mass at $x$. 
	For any element $f$ in the unit ball of $\ell^{\infty}(X)$, we may write $f$ as a strongly (and so weakly) convergent sum
		\begin{equation}
		\label{sot sum}
		f= \sum_{x\in X} f(x)p_x.
		\end{equation}
	
	Then, for an arbitrary $i,j$ where $1\leq i \leq N$ and $0 \leq j \leq n+1$ and $f_{(\ell,k)} \in (\ell^{\infty}(X))_1$ fixed whenever $\ell,k \neq i,j$, we have that
	$$
	\sum_{x_j \in X} \lambda^{(j)}_{x_j}y_0\phi\opint{y_1, \dots, \opint{\prod_{k=1}^{j-1}c_k f_{(i,k)}}c_jp_{x_j}\opint{\prod_{k=j+1}^{N_i}c_k f_{(i,k)}}, \dots , y_n}y_{n+1}
	$$
	weakly converges to
	$$
	y_0\phi(y_1, \dots, \prod_{k=1}^{N_i}c_k f_{(i,k)},\dots , y_n)y_{n+1}
	$$
	Moreover, (\ref{phi general}) is bounded above by $\norm{\phi}\prod_{k=1}^n\norm{a_k}$ for all $f_{(\ell,k)} \in \opint{\ell^{\infty}(X)}_1$.
	Hence, since the weak and ultraweak topologies coincide on norm bounded sets and $\phi\in\LL_w^n(A)$, we have that, for each partial coordinate fixing of $\phi$,
	$$
	\prod_{k=1}^{N_0}(c_{0,k}p_{x_{(0,k)}})\phi\Big(\prod_{k=1}^{N_1}c_{1,k}p_{x_{(1,k)}}, \dots, \prod_{k=1}^{N_n}c_{n,k}p_{x_{(n,k)}}\Big)\prod_{k=1}^{N_{n+1}}c_{n+1,k}p_{x_{(n+1,k)}}
	$$
	is separately symmetrically summable.
	Thus, by Corollary \ref{bf corollary}, for all $\epsilon>0$ there exists an $r>0$ (depending on the partial coordinate fixing of $\phi$) such that $y_0\phi(y_1, \dots , y_n) y_{n+1}$ can be $\epsilon$-$r$-approximated.
	\end{proof}
	
	\begin{lemma}
	$K_n\phi \in \LL_c^{n-1}(A)$ whenever $\phi \in \LL_w^n(A)$, where $K_n$ is constructed by averaging over the unitary group of $\ell^{\infty}(X)$.
	\end{lemma}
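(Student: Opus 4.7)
The plan is to show that, given $(a_1, \dots, a_{n-1}) \in A^{n-1}$ and $\epsilon > 0$, the operator $(K_n\phi)(a_1, \dots, a_{n-1})$ is $\epsilon$-$r$-approximable for some $r > 0$; by Definition \ref{er-approx} this gives $(K_n\phi)(a_1, \dots, a_{n-1}) \in A$. Separate norm continuity of $K_n\phi$ into $A$ then comes for free from Lemma \ref{lemma3.2.4} applied with ambient dual module $\BB(\ell^2(X))$, since $A$ is a norm-closed subspace.

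First I unfold $(K_n\phi)(a_1, \dots, a_{n-1})$ via Lemma \ref{finite K_n sum} into a finite sum of, say $M$, iterated integrals over $\UU$ of expressions of the form $y_0\phi(y_1,\dots,y_n)y_{n+1}$ with $y_i = \prod_k c_{i,k}v_{i,k}$, the $c_{i,k}$'s fixed in $\{1,a_1,\dots,a_{n-1}\}$ and the $v_{i,k}$'s ranging in $\UU \subseteq (\ell^\infty(X))_1$. Setting $\epsilon_0 := \epsilon/M$, for each such term the partial coordinate fixing together with Lemma \ref{general phi} supplies an $r_0 > 0$ such that, for every choice of unitaries $v_{i,k}$, the integrand is $\epsilon_0$-$r_0$-approximable. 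Let $r$ be the maximum of these $r_0$'s over the $M$ terms.

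The key step is to transfer $\epsilon_0$-$r$-approximability from the integrand to the iterated integral. Let
$$
C := \cualg{r}{X} + \{a \in \BB(\ell^2(X)) : \|a\| \leq \epsilon_0\}
$$
be the set of all $\epsilon_0$-$r$-approximable operators. It is convex, and moreover weak-$*$ closed: the first summand is weak-$*$ closed (as noted just before Lemma \ref{avg prop lem}), the second is weak-$*$ compact by Banach--Alaoglu, and the sum of a weak-$*$ closed set with a weak-$*$ compact set is weak-$*$ closed. Now for a single integration, if a bounded $T:\UU \to \BB(\ell^2(X))$ has $T(u) \in C$ for all $u$, then $\int T\,\dd\mu \in C$: otherwise Hahn--Banach separation would provide a trace-class $b$ and $\alpha \in \R$ with $\text{Re}\,\tr(b\int T\,\dd\mu) > \alpha \geq \text{Re}\,\tr(bT(u))$ for all $u$, yet identity \eqref{avg} gives $\text{Re}\,\tr(b\int T\,\dd\mu) = \Phi(\text{Re}\,\tr(bT(\cdot))) \leq \alpha$ by positivity of the state $\Phi$, a contradiction. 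Iterating this over each integration sign shows each of the $M$ iterated integrals lies in $C$; summing the $M$ pieces produces a decomposition exhibiting $(K_n\phi)(a_1,\dots,a_{n-1})$ as $(M\epsilon_0)$-$r$-approximable, i.e.\ $\epsilon$-$r$-approximable.

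The main obstacle is that key step: there is no obvious measurable selection of the decomposition $T(u) = b(u) + c(u)$ into a propagation-$r$ part and a small part available, so one cannot simply integrate the two pieces separately and cite Lemma \ref{avg prop lem}. The way around this, as above, is to package $\epsilon_0$-$r$-approximability as membership in a single weak-$*$ closed convex set, and then exploit positivity of the invariant mean through a Hahn--Banach argument to push this property through the integral.
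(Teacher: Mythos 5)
Your argument is correct, and it follows the paper's skeleton up to the decisive step: both proofs unfold $K_n\phi$ via Lemma \ref{finite K_n sum} into finitely many iterated averages of integrands $y_0\phi(y_1,\dots,y_n)y_{n+1}$, and both invoke Lemma \ref{general phi} to get a single $r$ such that every integrand is $\epsilon_0$-$r$-approximable uniformly in the unitaries. Where you diverge is in pushing approximability through the average. The paper simply chooses, for each tuple $\cl{u}$ of unitaries, a decomposition of the integrand as $a(\cl{u})+b(\cl{u})$ with $a(\cl{u})\in\cualg{r}{X}$ and $\norm{b(\cl{u})}<\epsilon_0$, and averages the two pieces separately: the propagation piece stays in $\cualg{r}{X}$ by (iterated) Lemma \ref{avg prop lem}, and the small piece stays small by contractivity of $\Psi$. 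The obstacle you raise against this --- the lack of a measurable selection $\cl{u}\mapsto(a(\cl{u}),b(\cl{u}))$ --- is not actually an obstacle here: the ``integral'' is an invariant mean defined on \emph{all} of $\ell^{\infty}(G,B^*)$, so any choice-function selection suffices provided it is uniformly bounded, and it is, since $\norm{a(\cl{u})}\leq\norm{\phi}\prod_k\norm{a_k}+\epsilon_0$. Your replacement for this step --- packaging $\epsilon_0$-$r$-approximability as membership in the convex set $C=\cualg{r}{X}+\{a:\norm{a}\leq\epsilon_0\}$, which is weak-$*$ closed as the sum of a weak-$*$ closed subspace and a weak-$*$ compact ball, and then showing $\Psi(T)\in C$ whenever $T$ takes values in $C$ via Hahn--Banach separation and positivity of the state $\Phi$ --- is a valid and genuinely different route. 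It buys a selection-free, reusable principle (the mean of a bounded family lying in a weak-$*$ closed convex set lies in that set) at the cost of an extra functional-analytic layer; the paper's version is more elementary but leans on the fact that the mean accepts arbitrary bounded functions. Your handling of separate norm continuity (inherited from Lemma \ref{lemma3.2.4} with ambient module $\BB(\ell^2(X))$, once the values land in the norm-closed subspace $A$) matches what the paper leaves implicit.
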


	\begin{proof}
	Let $\epsilon>0$ be given.
	
	By Lemmas \ref{finite sum} and \ref{general phi}, $K_n\phi(a_1, \dots, a_{n-1})$ is the finite sum of finite sums of terms of the form 
	$$
	\int_{\mathcal{U}_{j_i}} \dots \int_{\mathcal{U}_{j_1}} y_0\phi(y_1, \dots , y_n) y_{n+1} \dd\mu(u_{j_1})\dots \dd\mu(u_{j_i})
	$$
	where each term is a different partial coordinate fixing of $\phi$ (in the sense of Definition \ref{fixing}).
	Using Lemma \ref{general phi} we may write each of these terms as
	\begin{equation}
	\label{breakdown}
	=\int_{\mathcal{U}_{j_i}} \dots \int_{\mathcal{U}_{j_1}} a(\cl{u}) + b(\cl{u}) \dd\mu(u_{j_1})\dots \dd\mu(u_{j_i})
	\end{equation}
	where each $a(\cl{u}) \in \cualg{r}{X}$ and $\norm{b(\cl{u})} < \epsilon/M$ for a given $M>0$ and all $\cl{u} \in \mathcal{U}_{j_i} \times \cdots \times \mathcal{U}_{j_1}$.
	Thus, taking $M$ and $R$ sufficiently large, since $K_n\phi(a_1, \dots, a_{n-1})$ is the finite sum of terms as in line (\ref{breakdown}), $K_n\phi(a_1, \dots, a_{n-1})$ is $\epsilon$-$R$-approximated. Since $\epsilon$ was arbitrary, we are done.
	\end{proof}
	
	\begin{proof}[Proof of Theorem \ref{main 2}]
	By Remark \ref{key}, to show that $H_c^n(\cu{X}) = 0$ it suffices to show that $K_n\phi \in \LL_c^{n-1}(\cu{X})$ whenever $\phi \in \LL_w^n(\cu{X})$, which we have done in the previous lemma.
	\end{proof}

%%%%%%%%%%%%%%%%%%%%%%%%%%%%%%%%%%%%%%%%%%%%%%%%%%%

\section{\centering Ultraweak-Weak* Continuous Cohomology}	
\label{sec6}
	In this section we discuss methods for relating norm continuous and ultraweak-weak* continuous cohomologies which will allow us to obtain the following result.
	
	 \begin{theorem}
	 \label{main 3}
	 If $H_c^n(\cu{X}) = 0$ for all $n \in \N$ then $ H_w^n(\cu{X}) = 0$ for all $n\in \N$.
	 \end{theorem}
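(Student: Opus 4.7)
The plan is to prove Theorem \ref{main 3} by strong induction on $n$, combining the extension of cocycles to the enveloping von Neumann algebra $\BB:=\BB(\ell^2(X))=\cu{X}''$ with the multimodular reduction machinery developed in Section \ref{hochcoho}. For the base case $n=1$, every weakly continuous derivation on $\cu{X}$ is automatically norm continuous, hence inner by the hypothesis $H_c^1(\cu{X})=0$, and every inner derivation $\partial v$ for $v \in \cu{X}$ is weakly continuous, so $H_w^1(\cu{X})=0$.

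For the inductive step, assume $H_w^k(\cu{X})=0$ for all $k<n$, and let $\phi\in\LL_w^n(\cu{X},\cu{X})$ be a weakly continuous cocycle. First I would extend $\phi$ to a separately ultraweak-weak* continuous cocycle $\tilde\phi:\BB^n\to\BB$, using weak* density of $\cu{X}$ in $\BB$ together with the unique extension of weak*-continuous maps from $\cu{X}$ to $\BB=\cu{X}''$. Since $\BB$ is hyperfinite, the Kadison-Ringrose theorem (Theorem \ref{iszero}) in its ultraweak-weak* continuous form gives $H_w^n(\BB,\BB)=0$, so $\tilde\phi=\partial\tilde\psi$ for some $\tilde\psi\in\LL_w^{n-1}(\BB,\BB)$. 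Restricting to $\cu{X}^{n-1}$ produces a weakly continuous $\psi\in\LL_w^{n-1}(\cu{X},\BB)$ with $\partial\psi=\phi$.

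Next, I would apply the multimodular reduction $Q_{n-1}$ from Lemma \ref{lemma3.2.6} with respect to the unitary group of $\ell^\infty(X)$ (which is amenable since $\ell^\infty(X)$ is abelian), producing an $\ell^\infty(X)$-multimodular cochain $\psi':=Q_{n-1}\psi\in\LL^{n-1}(\cu{X},\BB:\ell^\infty(X))$. By Lemma \ref{multimodularequal}, $\psi'$ automatically takes values in $\cu{X}$, so $\psi'\in\LL^{n-1}(\cu{X},\cu{X}:\ell^\infty(X))$. The coboundary relation $\partial Q_{n-1}=Q_n\partial$ combined with $\phi=Q_n\phi+\partial K_n\phi$ (and the fact that $K_n\phi\in\LL_c^{n-1}(\cu{X},\cu{X})$ by the proof of Theorem \ref{main 2}) yields $\phi=\partial(\psi'+K_n\phi)$, realizing $\phi$ as a coboundary with both the cochain and its values living in $\cu{X}$.

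The main obstacle is to ensure that $\psi'+K_n\phi$ is genuinely ultraweak-weak* continuous, not merely norm continuous. Both $\psi'=Q_{n-1}\psi$ and $K_n\phi$ are built from averages with respect to an invariant mean on the unitary group of $\ell^\infty(X)$, and such means in general fail to preserve pointwise or weak* convergence of bounded integrands, so the weak continuity we started with may not survive the averaging. I expect the proof to overcome this by combining two ingredients. First, the multilinear Braga-Farah Theorem \ref{bf corollary} yields uniform finite-propagation approximations to the integrands $u\mapsto u^*\psi(u,a_1,\ldots,a_{n-2})$ appearing in the averaging (analogous to the argument in the proof of Theorem \ref{main 2}), which should be used to push weak continuity through the invariant mean on bounded sets. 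Second, the inductive hypothesis $H_w^k(\cu{X})=0$ for $k<n$ should allow any residual cocyclic ``defect of weak continuity'' produced by the averaging to be cancelled by subtracting a weakly continuous coboundary of a lower-dimensional cochain, leaving a genuinely weakly continuous $(n-1)$-cochain cobounding $\phi$.
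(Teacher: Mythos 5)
Your skeleton is recognizably close to the paper's: the base case via Kadison's automatic weak continuity of derivations, the reduction of the cobounding cochain to an $\ell^\infty(X)$-multimodular one via $Q_{n-1}$, and the use of Lemma \ref{multimodularequal} to see that multimodular cochains take values in $\cu{X}$ all appear in the paper's argument. (The detour through $\BB(\ell^2(X))$ is unnecessary --- the hypothesis $H_c^n(\cu{X})=0$ already hands you $\psi\in\LL_c^{n-1}(\cu{X})$ with $\partial\psi=\phi$ --- but it is harmless.) The genuine gap is that you correctly isolate the decisive step, the separate ultraweak-weak* continuity of $\psi'+K_n\phi$, and then do not prove it; moreover, neither mechanism you propose for it can work as described. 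The multilinear Braga--Farah theorem controls propagation, i.e.\ membership of the averaged operator in $\cu{X}$; it says nothing about continuity, and an invariant mean on the non-compact amenable unitary group of $\ell^\infty(X)$ does not commute with weak* limits of nets, so uniform $\epsilon$-$r$-approximation of the integrands cannot ``push weak continuity through'' the averaging. Likewise the inductive hypothesis $H_w^k(\cu{X})=0$ for $k<n$ has nothing to act on: the discrepancy between your norm-continuous cochain and a hoped-for weakly continuous one is an $(n-1)$-cochain that is not a cocycle, so lower-dimensional cohomology cannot be used to cancel it.

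The ingredient you are missing is Lemma \ref{containment multi}: every $\ell^\infty(X)$-multimodular norm-continuous cochain on $\cu{X}$ is automatically separately ultraweak-weak* continuous, that is, $\LL_c^{n-1}(\cu{X}:\ell^\infty(X))\subseteq\LL_w^{n-1}(\cu{X})$. This is a soft structural fact, proved with the projection $S_{n-1}T_{n-1}$ of Lemma \ref{lemma3.3.4} and the centrality of the projection $p$, not an approximation argument; it disposes of the continuity of $\psi'=Q_{n-1}\psi$ for free. The leftover term $K_n\phi$ is exactly where the hypothesis of the theorem must enter: the paper's Lemma \ref{up one} invokes $H_c^{n-1}(\cu{X})=0$ to replace the entire cobounding cochain by an $\ell^\infty(X)$-multimodular one, rather than attempting to show that $K_n\phi$ itself is weakly continuous. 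Note finally that your inductive step as written makes essentially no use of the hypothesis $H_c^n(\cu{X})=0$ for $n\geq 2$; if it went through it would yield $H_w^n(\cu{X})=0$ unconditionally, which should itself have been a warning sign that a step is missing.
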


	To accomplish the goals of this section we will have to use the \textit{enveloping von Neumann algebra}.
	
	\begin{theorem}[\cite{takesaki2013theory} III.2.2, III.2.4]
	\label{tilde pi}
	Let $\mathcal{A}$ be a C*-algebra and $\opint{\pi,\HH_{\pi}}$ be the universal representation of $\mathcal{A}$.
	Then there is a unique linear map $\tilde{\pi}$ of the double dual $\mathcal{A}^{**}$ onto  $\cl{\pi(\mathcal{A})}$, the weak closure of $\pi(A)$, with the following properties:

	\begin{enumerate}[(i)]
	\item
	If $\iota$ is the natural embedding then the diagram\\
	\begin{tikzpicture}
  \matrix (m) [matrix of math nodes,row sep=3em,column sep=8em,minimum width=2em]
  {
     \mathcal{A} & \cl{\pi(\mathcal{A})} \\
     \mathcal{A}^{**} & \\};
  \path[-stealth]
    (m-1-1) edge node [left] {$\iota$} (m-2-1)
    (m-1-1) edge node [above] {$\pi$} (m-1-2)
    (m-2-1) edge node [below] {$\tilde{\pi}$}  (m-1-2);
	\end{tikzpicture}\\
	is commutative.
	\item
	$\tilde{\pi}$ is $\sigma(\mathcal{A}^{**},\mathcal{A}^*)$-ultraweak continuous.
	\item
	$\tilde{\pi}$ maps the unit ball $(\mathcal{A}^{**})_1$ onto the unit ball $(\cl{\pi(\mathcal{A})})_1$.
	\item
	$\tilde{\pi}$ is a $\sigma(\mathcal{A}^{**},\mathcal{A}^*)$-ultraweak homeomorphism.
	\qed
	\end{enumerate}
	
	\end{theorem}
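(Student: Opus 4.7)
The plan is to proceed via the standard construction of the enveloping von Neumann algebra. First I would set up the universal representation as a Hilbert space direct sum $\pi = \bigoplus_{\varphi \in S(\mathcal{A})} \pi_\varphi$ over GNS representations attached to all states of $\mathcal{A}$, acting on $\HH_\pi = \bigoplus_\varphi \HH_\varphi$. The von Neumann algebra $\cl{\pi(\mathcal{A})}$ then has a canonical predual consisting of normal functionals, which I will call $\cl{\pi(\mathcal{A})}_*$.

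The central step is to construct a canonical isometric linear isomorphism $\Theta: \mathcal{A}^* \to \cl{\pi(\mathcal{A})}_*$. Given a state $\varphi \in S(\mathcal{A})$, the associated vector state on $\pi(\mathcal{A})$ determined by the cyclic vector $\xi_\varphi \in \HH_\varphi$ extends uniquely to a normal state $\tilde{\varphi}$ on $\cl{\pi(\mathcal{A})}$. I would then use the Jordan decomposition of Hermitian functionals, together with decomposition into real and imaginary parts, to define $\Theta$ on all of $\mathcal{A}^*$, and verify that $\Theta$ is isometric using the polar decomposition of bounded functionals on a C*-algebra. The inverse is immediate: any normal functional on $\cl{\pi(\mathcal{A})}$ restricts along $\pi$ to a bounded functional on $\mathcal{A}$, and one checks these assignments are mutually inverse.

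With $\Theta$ in hand, I define $\tilde{\pi} := \Theta^*: \mathcal{A}^{**} \to \cl{\pi(\mathcal{A})}$ to be its Banach space adjoint. All four properties then follow formally. Property (i) is verified by computing $\Theta(\omega)(\tilde{\pi}(\iota(a))) = \langle \iota(a), \omega\rangle = \omega(a) = \Theta(\omega)(\pi(a))$ for $a\in \mathcal{A}$ and $\omega \in \mathcal{A}^*$; since $\Theta$ is surjective this forces $\tilde{\pi} \circ \iota = \pi$. Property (ii) is automatic because the adjoint of a bounded linear map between predual--dual pairs is weak-$*$ to ultraweakly continuous. Property (iii) follows because $\tilde{\pi}$ is an isometric linear isomorphism of Banach spaces. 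For (iv), note that $\tilde{\pi}^{-1} = (\Theta^{-1})^*$ is also the adjoint of a bounded linear map, hence also ultraweakly continuous, so $\tilde{\pi}$ is an ultraweak homeomorphism.

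The main obstacle is the isometric identification $\mathcal{A}^* \iso \cl{\pi(\mathcal{A})}_*$: one must carefully verify that the assignments $\varphi \mapsto \tilde{\varphi}$, extended by Jordan and polar decomposition, preserve norms, and that every normal functional on $\cl{\pi(\mathcal{A})}$ arises this way. Uniqueness of $\tilde{\pi}$ is then automatic once existence is established, since $\iota(\mathcal{A})$ is $\sigma(\mathcal{A}^{**}, \mathcal{A}^*)$-dense in $\mathcal{A}^{**}$ and any candidate map must be continuous for these topologies by (ii).
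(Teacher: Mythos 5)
The paper does not prove this statement; it is quoted verbatim from Takesaki (III.2.2, III.2.4) with the proof deferred to that reference, and your argument is precisely the standard one given there: identify $\mathcal{A}^*$ isometrically with the predual of $\cl{\pi(\mathcal{A})}$ via extension of states to normal states (using Jordan and polar decomposition), and obtain $\tilde{\pi}$ as a Banach space adjoint, from which (i)--(iv) follow formally. One small correction: as written, $\Theta^*$ maps $\bigl(\cl{\pi(\mathcal{A})}_*\bigr)^* = \cl{\pi(\mathcal{A})}$ into $\mathcal{A}^{**}$, i.e.\ the wrong direction; you want $\tilde{\pi} := (\Theta^{-1})^*$, the adjoint of the restriction map $\cl{\pi(\mathcal{A})}_* \to \mathcal{A}^*$ --- your subsequent computation $\Theta(\omega)(\tilde{\pi}(\iota(a))) = \omega(a)$ is consistent with this corrected definition, so the substance of the argument is unaffected.
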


%%%%%%%%%%%%%%%%%%%%%%%%%%%%%%%%%%%%%%%%%%%%%%%%%%%%
\subsection{A Bridge Between Ultraweak-Weak* Continuous and Norm Continuous Cohomology}

In this subsection we discuss a method to extend separately continuous multilinear maps to separately ultraweakly continuous multilinear maps. Many of the proofs can be found in Sinclair and Smith \cite{sinclair1995hochschild}.

\begin{lemma}[\cite{sinclair1995hochschild} 3.3.2]
\label{lem3.3.2}
Let $\mathcal{A}$ and $\mathcal{B}$ be C*-algebras acting nondegenerately on a Hilbert space $\HH$ with ultraweak closures $\cl{\mathcal{A}}$ and $\cl{\mathcal{B}}$. 
Let $\tau$ be a bounded bilinear form on $\mathcal{A} \times \mathcal{B}$.
If $\tau$ is separately ultraweakly continuous, then $\tau$ extends uniquely to to a separately ultraweakly continuous bilinear form $\cl{\tau}$ on $\cl{\mathcal{A}} \times \mathcal{B}$.
\qed
\end{lemma}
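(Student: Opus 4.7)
The plan is to extend $\tau$ in the first variable using the standard identification of ultraweakly continuous linear functionals on $\mathcal{A}$ with the predual of its ultraweak closure. Since $\mathcal{A}$ acts nondegenerately on $\HH$, $\cl{\mathcal{A}}$ is a von Neumann algebra and restriction is an isometric bijection from $\cl{\mathcal{A}}_\ast$ onto the space of ultraweakly continuous linear functionals on $\mathcal{A}$, its inverse being normal extension. For each fixed $b \in \mathcal{B}$, the functional $\phi_b := \tau(\cdot,b)$ is bounded and ultraweakly continuous by hypothesis, so it admits a unique normal extension $\cl{\phi_b} \in \cl{\mathcal{A}}_\ast$ with $\|\cl{\phi_b}\| = \|\phi_b\| \leq \|\tau\|\|b\|$. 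I then define $\cl{\tau}(a,b) := \cl{\phi_b}(a)$ for $a \in \cl{\mathcal{A}}$ and $b \in \mathcal{B}$.

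The routine verifications are short. Linearity of $\cl\tau$ in $a$ is built into each $\cl{\phi_b}$; linearity in $b$ follows because $b \mapsto \phi_b$ is linear and normal extensions are unique, forcing $\cl{\phi_{b_1 + \lambda b_2}} = \cl{\phi_{b_1}} + \lambda \cl{\phi_{b_2}}$. The bound $\|\cl\tau\| \leq \|\tau\|$ is a direct estimate, ultraweak continuity in the first variable holds by construction, and uniqueness of the whole extension follows because any separately ultraweakly continuous $\cl{\tau}'$ agreeing with $\tau$ on $\mathcal{A} \times \mathcal{B}$ must agree with $\cl\tau$ on $\cl{\mathcal{A}} \times \mathcal{B}$ by ultraweak density of $\mathcal{A}$ in $\cl{\mathcal{A}}$ combined with continuity in the first variable.

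The main obstacle is verifying that $b \mapsto \cl\tau(a,b)$ is ultraweakly continuous on $\mathcal{B}$ for a fixed $a \in \cl{\mathcal{A}} \setminus \mathcal{A}$. My approach: fix such an $a$, use Kaplansky density to select a bounded net $(a_\gamma) \subset \mathcal{A}$ with $\|a_\gamma\| \leq \|a\|$ and $a_\gamma \to a$ ultraweakly, and observe that $\cl\tau(a,b) = \lim_\gamma \tau(a_\gamma,b)$ for every $b \in \mathcal{B}$ by normality of $\cl{\phi_b}$. Thus $\cl\tau(a,\cdot)$ is the pointwise limit on $\mathcal{B}$ of the uniformly norm-bounded net of normal functionals $\psi_\gamma := \tau(a_\gamma,\cdot) \in \cl{\mathcal{B}}_\ast$, and the task reduces to showing that this pointwise limit extends to a normal functional on $\cl{\mathcal{B}}$. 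The natural strategy is to show that the $\psi_\gamma$ lie in a relatively weakly compact subset of $\cl{\mathcal{B}}_\ast$, pass to a subnet converging weakly in the predual, and exploit uniqueness of normal extensions: any two subnet limits must agree with $\cl\tau(a,\cdot)$ on the ultraweakly dense subalgebra $\mathcal{B} \subset \cl{\mathcal{B}}$, hence coincide, so the whole net converges. I expect this to be the technical crux, because bounded subsets of preduals of von Neumann algebras are not in general weakly compact; establishing the required relative weak compactness---via uniform absolute continuity of $\{\psi_\gamma\}$ on pairwise orthogonal projections, in the spirit of Akemann's theorem---will require exploiting the specific structure of a norm-bounded, ultraweakly convergent approximating net rather than appealing to any general bounded-set principle. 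A cleaner conceptual alternative is to invoke the canonical Arens extension $\tau^{\ast\ast}:\mathcal{A}^{\ast\ast}\times\mathcal{B}^{\ast\ast}\to\mathbb{C}$, which is automatically separately weak-$\ast$ continuous, and then descend along the canonical ultraweakly continuous surjections $\mathcal{A}^{\ast\ast}\to\cl{\mathcal{A}}$ and $\mathcal{B}^{\ast\ast}\to\cl{\mathcal{B}}$ supplied by Theorem \ref{tilde pi}, after checking that $\tau^{\ast\ast}$ annihilates the respective kernels---a check that itself uses the hypothesized separate ultraweak continuity of $\tau$ on $\mathcal{A}\times\mathcal{B}$.
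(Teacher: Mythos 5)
First, a point of reference: the paper does not prove this lemma at all---it is imported verbatim from Sinclair and Smith (their Lemma 3.3.2) with a \qed attached to the statement, so there is no in-paper argument to compare against. Judged on its own, your first stage is correct and standard: for fixed $b$ the functional $\tau(\cdot,b)$ is ultraweakly continuous on $\mathcal{A}$, hence (Hahn--Banach in the ultraweak topology plus Kaplansky density for the isometry) extends uniquely to an element of $\cl{\mathcal{A}}_*$, and linearity in $b$, the norm bound, first-variable normality, and uniqueness all follow as you say. You have also correctly located the crux: ultraweak continuity of $b\mapsto\cl{\tau}(a,b)$ for $a\in\cl{\mathcal{A}}\setminus\mathcal{A}$.

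The gap is that you do not resolve that crux, and the resolution you sketch points at the wrong source of compactness. The relative weak compactness you need is not a feature of ``the specific structure of a norm-bounded, ultraweakly convergent approximating net'': it is a property of the \emph{operator} $S:\mathcal{A}\to\mathcal{B}^*_\sigma\cong\cl{\mathcal{B}}_*$, $a\mapsto\tau(a,\cdot)$, namely that every bounded linear map from a C*-algebra into the predual of a von Neumann algebra is weakly compact (a consequence of Pe\l czy\'nski's property (V) for C*-algebras together with weak sequential completeness of preduals, or equivalently of the noncommutative Grothendieck--Haagerup inequality). Granting that theorem, the argument closes exactly as you outline: $S^*(\text{ball}(\cl{\mathcal{A}}))$ is relatively weakly compact in $\mathcal{B}^*$ by Gantmacher, a weak subnet limit of the $\psi_\gamma$ must equal the weak* limit $S^*a=\cl{\tau}(a,\cdot)$, and the norm-closed (hence weakly closed) subspace of ultraweakly continuous functionals on $\mathcal{B}$ therefore contains it. But attempting to verify Akemann's uniform-absolute-continuity criterion ``by hand'' for the net $\psi_\gamma$ is not a shortcut---it amounts to reproving the weak compactness of $S$ on the whole unit ball, which is where all the C*-algebraic depth sits. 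Your ``cleaner alternative'' has the same hidden cost: the Arens extension of a bounded bilinear form is weak*-continuous in only one variable in general, and the assertion that $\tau^{**}$ is \emph{automatically} separately weak*-continuous is precisely Arens regularity of bilinear forms on C*-algebras, which is equivalent to the weak compactness statement above and is false for general Banach spaces. Either route is fine, but one of these theorems must be explicitly invoked (as Sinclair and Smith do); as written, the proposal leaves the essential step unproved.
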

	
\begin{lemma}[\cite{sinclair1995hochschild} Lemma 3.3.3]
\label{weak extension}
Let $\mathcal{A}$ be a C*-algebra acting nondegenerately on a Hilbert space $\HH$ and let $\mathcal{V}$ be the dual of a Banach space $\mathcal{V}_*$.
If $\phi$ is a bounded $n$-linear map from $\mathcal{A}^n$ to $\mathcal{V}$ that is separately ultraweak-weak* continuous, then $\phi$ extends uniquely without change in norm to a bounded $n$-linear map $\cl{\phi}$ from $(\cl{\mathcal{A}})^n$ to $\mathcal{V}$ that is seperately ultraweak-weak* continuous.
\qed
\end{lemma}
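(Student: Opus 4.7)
The plan is to induct on $n$, extending one variable at a time and using Kaplansky's density theorem throughout to control norms and propagate separate continuity.

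\textbf{Base case ($n=1$).} Given a bounded ultraweak--weak* continuous linear $\phi:\A\to\VV$, for each $v_*\in\VV_*$ the functional $f_{v_*}:a\mapsto\lrangle{\phi(a),v_*}$ is an ultraweakly continuous linear functional on $\A$ of norm at most $\norm{\phi}\norm{v_*}$. Any such functional extends uniquely to a normal linear functional $\tilde f_{v_*}$ on the von Neumann algebra $\cl{\A}$ of the same norm (this is standard: extend to a normal functional on $\BB(\HH)$ via Hahn--Banach in the predual, then restrict). For fixed $\tilde a\in\cl{\A}$, the assignment $v_*\mapsto\tilde f_{v_*}(\tilde a)$ is linear in $v_*$ and, using Kaplansky density to approximate $\tilde a$ by a net in $\A$ of norm $\le\norm{\tilde a}$ and lower semicontinuity of the weak* limit, bounded by $\norm{\phi}\norm{\tilde a}\norm{v_*}$. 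Thus it defines an element $\cl\phi(\tilde a)\in\VV$ with $\norm{\cl\phi(\tilde a)}\le\norm{\phi}\norm{\tilde a}$. Ultraweak--weak* continuity of $\cl\phi$ is built in by construction, and uniqueness is immediate from ultraweak density of $\A$ in $\cl\A$.

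\textbf{Inductive step.} Assume the result for $(n-1)$-linear maps. Given $\phi\in\LL^n_w(\A,\VV)$, fix $a_n\in\A$; then $(a_1,\dots,a_{n-1})\mapsto\phi(a_1,\dots,a_n)$ is separately ultraweak--weak* continuous, so by the inductive hypothesis it extends uniquely to a bounded $(n-1)$-linear separately ultraweak--weak* continuous map on $(\cl\A)^{n-1}$, call it $\phi_{a_n}$. Now fix $(\tilde a_1,\dots,\tilde a_{n-1})\in(\cl\A)^{n-1}$ and consider $a_n\mapsto\phi_{a_n}(\tilde a_1,\dots,\tilde a_{n-1})$. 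Applying the $n=1$ case to this map (after verifying it is ultraweak--weak* continuous in $a_n$, see below) produces the desired extension $\cl\phi:(\cl\A)^n\to\VV$.

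\textbf{Main obstacle: ``transporting'' continuity across the extension.} The key technical issue is showing that after extending in the first $n-1$ variables, the resulting map is still ultraweak--weak* continuous in the $n$-th variable. For each fixed $v_*\in\VV_*$ and each fixed $a_n\in\A$, the functional $(a_1,\dots,a_{n-1})\mapsto\lrangle{\phi_{a_n}(a_1,\dots,a_{n-1}),v_*}$ is separately ultraweakly continuous on $\A^{n-1}$. Pair this with $a_n$; using Kaplansky's density theorem to approximate $\tilde a_i$ by nets in $\A$ of norm $\le\norm{\tilde a_i}$, together with the bound $\norm{\phi_{a_n}(\tilde a_1,\dots,\tilde a_{n-1})}\le\norm{\phi}\prod\norm{\tilde a_i}$ from the inductive step, one shows that the ultraweak convergence $a_n^\alpha\to a_n$ in $\A$ yields weak* convergence $\phi_{a_n^\alpha}(\tilde a_1,\dots,\tilde a_{n-1})\to\phi_{a_n}(\tilde a_1,\dots,\tilde a_{n-1})$ in $\VV$ by passing to iterated limits and using that bounded weak*-to-weak* limits commute with the extensions of normal functionals. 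This step is the most delicate and really does require Kaplansky density combined with the uniqueness clauses of the prior extensions at each level.

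\textbf{Norm preservation and uniqueness.} By Kaplansky's density theorem, every element in the unit ball of $(\cl\A)^n$ is the ultraweak limit (componentwise) of a net in the unit ball of $\A^n$. Separate ultraweak--weak* continuity of $\cl\phi$ together with lower semicontinuity of $\norm{\cdot}$ in the weak* topology then yields $\norm{\cl\phi}\le\norm{\phi}$, and the reverse inequality is trivial since $\cl\phi$ extends $\phi$. Uniqueness of $\cl\phi$ follows since any two such extensions agree on the ultraweakly dense set $\A^n\subseteq(\cl\A)^n$ and are separately continuous in the ultraweak--weak* topologies.
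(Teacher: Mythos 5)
Your outline---extend one variable at a time, with Kaplansky's density theorem controlling norms and uniqueness---is the same skeleton as the actual proof (the paper imports this statement from \cite{sinclair1995hochschild}, where Lemma 3.3.3 is obtained by iterating the one-variable extension Lemma 3.3.2), and your base case $n=1$, your norm estimate, and your uniqueness argument are all fine. The gap sits exactly at the point you flag as the ``main obstacle,'' and the sentence you offer there does not close it. To apply the $n=1$ case in the last variable you must show that, for fixed $(\tilde a_1,\dots,\tilde a_{n-1})\in(\cl{\mathcal{A}})^{n-1}$ and $v_*\in\mathcal{V}_*$, the functional $a_n\mapsto\lrangle{\phi_{a_n}(\tilde a_1,\dots,\tilde a_{n-1}),v_*}$ is ultraweakly continuous on $\mathcal{A}$. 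Writing each $\tilde a_i$ as a Kaplansky limit of a bounded net from $\mathcal{A}$, this amounts to interchanging that limit with an ultraweak limit $a_n^\alpha\to a_n$, i.e., to the claim that a bounded net of ultraweakly continuous functionals converging pointwise on an ultraweakly dense C*-subalgebra converges pointwise on the whole von Neumann algebra. That claim is false in general: take $c_0\subseteq\ell^\infty$ acting on $\ell^2(\N)$ and the coordinate functionals $\delta_m\in\ell^1$; they tend to $0$ pointwise on $c_0$, yet $\delta_m(1)=1$ for all $m$, and with $p_k$ the rank-$k$ diagonal projections one gets $\lim_m\lim_k\delta_m(p_k)=1\neq 0=\lim_k\lim_m\delta_m(p_k)$. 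So ``bounded weak*-to-weak* limits commute with the extensions of normal functionals'' is precisely the statement that needs proof, and it is not a consequence of Kaplansky density plus the uniqueness clauses.

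What rescues the lemma is the separate ultraweak continuity in the \emph{other} variables, exploited through a nontrivial input that your argument never invokes: for a bounded, separately ultraweak--weak* continuous $\phi$, the set of functionals $\set{\lrangle{\phi(\,\cdot\,,a_2,\dots,a_n),v_*} : \norm{a_j}\leq 1,\ \norm{v_*}\leq 1}$ is relatively weakly compact in the predual of $\cl{\mathcal{A}}$. This rests on the theorem that every bounded linear map from a C*-algebra into the predual of a von Neumann algebra is weakly compact (a consequence of the noncommutative Grothendieck inequality), which is how Sinclair and Smith prove the bilinear case (Lemma \ref{lem3.3.2} here). On a relatively weakly compact subset of the predual, the coarser Hausdorff topology of pointwise convergence on the dense subalgebra agrees with the weak topology, so the limit functional is again normal and the iterated limits do commute. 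Without this compactness step your induction does not go through; with it, your outline becomes the standard proof.
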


	The following lemma can be found in Blackadar \cite{blackadar2006operator} III.5.2.11 or Takesaki \cite{takesaki2013theory} III.2.4, III.2.14. 
	\begin{lemma}
	\label{takesaki}
	Let $\mathcal{A}$ be a C*-algebra acting on a Hilbert space $\HH$ with weak closure $\overline{\mathcal{A}}$.  If $\pi$ is the universal representation of $\mathcal{A}$, then there is a projection $p$ in the center of the weak closure $\overline{\pi(\mathcal{A})}$ of $\pi(\mathcal{A})$ and a $*$-isomorphism
	$$
	\theta:p\overline{\pi(\mathcal{A})} \to \overline{\mathcal{A}} \h \text{such that}
	$$
	\begin{equation}
	\label{theta}
	\theta(p\pi(a))=a \h\text{and}\h \theta(px) = \pi^{-1}(x) \text{ for all } a\in \mathcal{A} \text{ and } x\in \pi(\mathcal{A}).
	\end{equation}
	Moreover, $\theta$ is a homeomorphism from $p\overline{\pi(\mathcal{A})}$ onto $\overline{\mathcal{A}}$ if both have their ultraweak topologies, since $*$-isomorphisms between von Neumann algebras are ultraweak homeomorphisms.
	\qed
	\end{lemma}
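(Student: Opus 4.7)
The plan is to build $\theta$ by extending the inclusion $\iota:\mathcal{A}\hookrightarrow\BB(\HH)$ to the enveloping von Neumann algebra and then transporting along the ultraweak homeomorphism $\tilde{\pi}$ supplied by Theorem \ref{tilde pi}. First, I would apply the universal property of $\mathcal{A}^{**}$ (which is essentially what Theorem \ref{tilde pi} encodes, but now for the representation $\iota$ instead of $\pi$) to obtain a normal $*$-homomorphism $\tilde\iota:\mathcal{A}^{**}\to\overline{\mathcal{A}}$ with $\tilde\iota|_{\mathcal{A}}=\iota$ and $\tilde\iota$ ultraweakly continuous. Because $\mathcal{A}$ is ultraweakly dense in $\overline{\mathcal{A}}$ and $\tilde\iota$ has ultraweakly closed image (being normal with range in a von Neumann algebra; equivalently by Kaplansky density), $\tilde\iota$ is surjective.

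Next, I would identify the kernel. The ideal $\ker(\tilde\iota)$ is an ultraweakly closed two-sided ideal of the von Neumann algebra $\mathcal{A}^{**}$, so a standard structural fact yields a unique central projection $q\in\mathcal{A}^{**}$ with $\ker(\tilde\iota)=(1-q)\mathcal{A}^{**}$. Consequently $\tilde\iota$ restricts to a $*$-isomorphism $q\mathcal{A}^{**}\to\overline{\mathcal{A}}$. To transport this to $\overline{\pi(\mathcal{A})}$, set $p:=\tilde{\pi}(q)$; since $\tilde{\pi}$ is a $*$-isomorphism onto $\overline{\pi(\mathcal{A})}$ by Theorem \ref{tilde pi}, $p$ is a central projection in $\overline{\pi(\mathcal{A})}$, and I define
$$
\theta:p\overline{\pi(\mathcal{A})}\to\overline{\mathcal{A}},\qquad \theta(px):=\tilde\iota\bigl(\tilde{\pi}^{-1}(px)\bigr).
$$
This is a $*$-isomorphism as a composition of $*$-isomorphisms.

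To verify the formula \eqref{theta}, note that for $a\in\mathcal{A}$ one has $\tilde{\pi}^{-1}(p\pi(a))=q\,j(a)$, where $j:\mathcal{A}\to\mathcal{A}^{**}$ is the canonical embedding, so
$$
\theta(p\pi(a))=\tilde\iota(q\,j(a))=\tilde\iota(j(a))=\iota(a)=a,
$$
using that $(1-q)\mathcal{A}^{**}$ is the kernel of $\tilde\iota$ and that $\tilde\iota$ agrees with $\iota$ on $\mathcal{A}$. The identity $\theta(px)=\pi^{-1}(x)$ for $x\in\pi(\mathcal{A})$ is the same statement with $x=\pi(a)$. The moreover clause is essentially free: any $*$-isomorphism between von Neumann algebras is automatically normal, hence an ultraweak homeomorphism in both directions.

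The main obstacle, and the step that carries the substantive content, is the identification of $\ker(\tilde\iota)$ with $(1-q)\mathcal{A}^{**}$ for a \emph{central} projection $q$. Everything else is formal bookkeeping with universal properties. Once the ideal-to-central-projection dictionary for von Neumann algebras is available, together with Theorem \ref{tilde pi} identifying $\mathcal{A}^{**}$ with $\overline{\pi(\mathcal{A})}$, the cutting projection and the $*$-isomorphism $\theta$ drop out, and the ultraweak homeomorphism property requires no further work.
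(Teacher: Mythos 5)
Your proposal is correct and is essentially the standard argument from the sources the paper cites for this lemma (the paper itself offers no proof, only the references to Blackadar III.5.2.11 and Takesaki III.2.14): extend the identity representation normally to $\mathcal{A}^{**}$, cut by the central support of its kernel, and transport through $\tilde{\pi}$. All the steps you flag as substantive (surjectivity of the normal extension, the weak*-closed-ideal-to-central-projection correspondence) are exactly the ingredients used there, and your verification of the formula $\theta(p\pi(a))=a$ is sound.
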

	
	\begin{lemma}[\cite{sinclair1995hochschild} Lemma 3.3.4]
	\label{lemma3.3.4}
	Let $\mathcal{A}$ be a C*-algebra acting on a Hilbert space $\HH$ with weak closure $\overline{\mathcal{A}}$.  Let $\pi$ be the universal representation of $\mathcal{A}$, and let $p$, $\theta$ be as in Lemma \ref{takesaki}. Additionally, let $\mathcal{V}$ be a dual normal $\overline{\mathcal{A}}$-module. Then $\mathcal{V}$ may be regarded as a dual normal $\overline{\pi(\mathcal{A})}$-module via 
	\begin{equation}
	\label{action}
	x\cdot v = \theta(px)v \h \text{and} \h v\cdot x = v\theta(px)
	\end{equation}
	and there are continuous linear maps 
	$$
	T_n: \LL_c^n(\mathcal{A}, \mathcal{V}) \to \LL_w^n(\overline{\pi(\mathcal{A})}, \mathcal{V}) ,
	$$
	$$
	S_n:  \LL_w^n(\overline{\pi(\mathcal{A})}, \mathcal{V}) \to  \LL_w^n(\overline{\mathcal{A}}, \mathcal{V})
	$$
	$$
	W_n:  \LL_w^n(\overline{\pi(\mathcal{A})}, \mathcal{V}) \to \LL_c^n(\mathcal{A}, \mathcal{V})
	$$
	such that:
	
	\begin{enumerate}[(i)]
	\item
	$\partial T_n = T_{n+1}\partial$, $\partial W_n = W_{n+1}\partial$, and $\partial S_n = S_{n+1}\partial$,
	\item
	$\norm{T_n}, \norm{S_n}, \norm{W_n} \leq 1$,
	\item
	if $\mathcal{B}$ is a C*-subalgebra of $\mathcal{A}$, $T_n$ maps $\mathcal{B}$-multimodular maps to $\overline{\pi(\mathcal{B})}$-multimodular maps, and $S_n$ and $W_n$ map $\overline{\pi(\mathcal{B})}$-multimodular maps to $\mathcal{B}$-multimodular maps,
	\item
	$S_nT_n$ is a projection from $\LL_c^n(\mathcal{A}, \mathcal{V})$ onto $\LL_w^n(\mathcal{A}, \mathcal{V})$,
	\item
	if $\mathcal{C}$ is the C*-subalgebra of $\overline{\pi(\mathcal{A})}$ generated by $1$ and $p$, and if 
	$$
	\psi \in \LL_w^n(\overline{\pi(\mathcal{A})}, \mathcal{V}: \mathcal{C}),
	$$
	then $W_n\psi = S_n\psi \in \LL_w^n(\overline{\mathcal{A}}, \mathcal{V}),$
	\item
	$W_nT_n$ is the identity map on $\LL_c^n(\mathcal{A}, \mathcal{V})$.
	\qed
	\end{enumerate}
	\end{lemma}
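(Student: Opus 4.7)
The plan is to construct the three maps explicitly from the data of Lemma \ref{takesaki} and then verify the six properties, using in an essential way that $p$ is central in $\overline{\pi(\mathcal{A})}$, that $\theta: p\overline{\pi(\mathcal{A})} \to \overline{\mathcal{A}}$ is a $*$-isomorphism and ultraweak homeomorphism, and that the action of $\overline{\pi(\mathcal{A})}$ on $\mathcal{V}$ factors through $\theta$. First I would check that the prescribed action $x \cdot v := \theta(px)v$ makes $\mathcal{V}$ a dual normal $\overline{\pi(\mathcal{A})}$-module: the bimodule axioms follow from $p$ being central and $\theta$ being a $*$-homomorphism, the norm estimates are inherited from the $\overline{\mathcal{A}}$-action via the isometry $\theta$, and the weak-$*$ continuity of $x \mapsto \theta(px)v$ is the composition of ultraweakly continuous left multiplication by $p$, the ultraweak homeomorphism $\theta$, and dual-normality of $\mathcal{V}$ over $\overline{\mathcal{A}}$. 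Then I would construct $T_n\phi$ by iterated ultraweak extension: since $\pi(\mathcal{A})$ is ultraweakly dense in $\overline{\pi(\mathcal{A})}$ and $\pi(a)\cdot v = \theta(p\pi(a))v = av$, one applies Lemma \ref{weak extension} in each variable to extend $\phi \circ \pi^{-1}$ to a separately ultraweak-weak$^*$ continuous map on $(\overline{\pi(\mathcal{A})})^n$. The maps $S_n\psi(\bar{a}_1,\dots,\bar{a}_n) := \psi(\theta^{-1}(\bar{a}_1),\dots,\theta^{-1}(\bar{a}_n))$ and $W_n\psi(a_1,\dots,a_n) := \psi(\pi(a_1),\dots,\pi(a_n))$ are defined by precomposition with $\theta^{-1}$ and $\pi$ respectively.

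Properties (i), (ii), (iii), (vi) are then relatively direct. For (i), $W_n$ and $S_n$ commute with $\partial$ because the coboundary formula depends only on multiplication and module action, both of which are preserved by the $*$-homomorphisms $\pi$ and $\theta^{-1}$; for $T_n$, both $\partial T_n\phi$ and $T_{n+1}\partial\phi$ are separately ultraweak-weak$^*$ continuous extensions of $\partial\phi$, so they agree by uniqueness of the extension. Property (ii), that each map is a contraction, follows because $\pi$ and $\theta$ are isometric and ultraweak extension is norm-preserving. For (iii), if $\psi$ is $\overline{\pi(\mathcal{B})}$-multimodular then $W_n\psi$ is $\mathcal{B}$-multimodular since $\pi(b)\cdot v = bv$; similarly for $S_n$; for $T_n$, multimodularity is a pointwise algebraic identity, so it survives iterated ultraweak extension by continuity in each variable (using that $\pi(\mathcal{B})$ is ultraweakly dense in $\overline{\pi(\mathcal{B})}$). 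Property (vi) is the tautology $W_nT_n\phi(a_1,\dots,a_n) = T_n\phi(\pi(a_1),\dots,\pi(a_n)) = \phi(a_1,\dots,a_n)$ directly from the extension property of $T_n$.

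The two delicate points are (v) and (iv). For (v), suppose $\psi \in \LL_w^n(\overline{\pi(\mathcal{A})},\mathcal{V}:\mathcal{C})$ with $\mathcal{C}$ generated by $1$ and $p$. Writing $\pi(a_i) = p\pi(a_i) + (1-p)\pi(a_i)$ and expanding by multilinearity, the key identity is that any summand containing $(1-p)\pi(a_i)$ in some coordinate vanishes: in the leftmost or rightmost slot this follows since $(1-p)\cdot v = \theta(p(1-p))v = 0$, and in interior slots one uses $\mathcal{C}$-multimodularity to slide $(1-p)$ to an adjacent coordinate holding $p\pi(a_j)$, after which $p(1-p) = 0$ kills the term. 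Iterating this slot by slot yields $\psi(\pi(a_1),\dots,\pi(a_n)) = \psi(p\pi(a_1),\dots,p\pi(a_n))$, which is exactly $W_n\psi = S_n\psi$ after identifying $\overline{\mathcal{A}}$-arguments with their images under $\theta^{-1}$. For (iv), $S_nT_n\phi \in \LL_w^n(\overline{\mathcal{A}},\mathcal{V})$ trivially by construction, so its restriction to $\mathcal{A}^n$ lies in $\LL_w^n(\mathcal{A},\mathcal{V})$; idempotence amounts to showing that if $\phi \in \LL_w^n(\mathcal{A},\mathcal{V})$ then $S_nT_n\phi|_{\mathcal{A}} = \phi$. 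For this, fix $a_1,\dots,a_n \in \mathcal{A}$ and choose nets $\pi(b_\alpha^{(i)}) \to p\pi(a_i)$ ultraweakly; applying the ultraweak homeomorphism $\theta$ gives $b_\alpha^{(i)} = \theta(p\pi(b_\alpha^{(i)})) \to \theta(p\pi(a_i)) = a_i$ ultraweakly in $\mathcal{A}$. Successively evaluating $T_n\phi$ at these nets, using separate ultraweak-weak$^*$ continuity of $T_n\phi$ and then the hypothesis that $\phi$ itself is separately ultraweak-weak$^*$ continuous on $\mathcal{A}$, yields $T_n\phi(p\pi(a_1),\dots,p\pi(a_n)) = \phi(a_1,\dots,a_n)$. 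The main obstacle throughout is bookkeeping the iterative weak extension constructing $T_n$ coordinate-by-coordinate, together with the careful dissection by $p$ and $1-p$ in (v)-(iv); once the correct ordering and the annihilation $(1-p)\cdot \mathcal{V} = 0$ are in hand, the remaining verifications reduce to applications of Lemmas \ref{lem3.3.2}, \ref{weak extension}, and \ref{takesaki}.
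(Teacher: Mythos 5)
Your three maps are exactly the ones the paper records (the paper itself gives no proof of this lemma, deferring to Sinclair--Smith and recalling only the constructions in Remarks \ref{T_n}--\ref{W_n wins}): since $\theta(px)=\pi^{-1}(x)$ for $x\in\pi(\mathcal{A})$ by line \eqref{theta}, your $\phi\circ\pi^{-1}$ is the paper's $\phi_1$, and your $S_n$, $W_n$ are verbatim the paper's precompositions with $\theta^{-1}$ and $\pi$. Your verifications of (i)--(iii) and (vi), the $(1-p)$-annihilation argument for (v) (which matches the paper's observation that $\mathcal{C}$-multimodularity is equivalent to vanishing whenever some argument lies in $(1-p)\overline{\pi(\mathcal{A})}$), and the iterated-net argument for idempotence in (iv) are all sound and essentially the Sinclair--Smith arguments.

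There is, however, one genuine gap, and it sits at the foundation of the construction of $T_n$. You invoke Lemma \ref{weak extension} to extend $\phi\circ\pi^{-1}$ from $\pi(\mathcal{A})^n$ to $(\overline{\pi(\mathcal{A})})^n$, but the hypothesis of that lemma is that the map be \emph{separately ultraweak-weak$^*$ continuous}, whereas $\phi$ is only assumed norm continuous: $T_n$ is defined on all of $\LL_c^n(\mathcal{A},\mathcal{V})$, and property (iv) is only meaningful because $T_n$ accepts merely norm-continuous cochains. Ultraweak density of $\pi(\mathcal{A})$ in $\overline{\pi(\mathcal{A})}$ does not by itself permit such an extension; if it did, you could take $\pi=\mathrm{id}$ and conclude $\LL_c^n(\mathcal{A},\mathcal{V})=\LL_w^n(\mathcal{A},\mathcal{V})$ for any concretely represented $\mathcal{A}$, which is false. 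The missing idea --- and the entire reason the \emph{universal} representation appears in the statement --- is that every bounded linear functional on $\pi(\mathcal{A})$ is ultraweakly continuous when $\pi$ is universal (\cite{takesaki2013theory} III.2.4; cf.\ Theorem \ref{tilde pi}, via the identification of $\overline{\pi(\mathcal{A})}$ with $\mathcal{A}^{**}$). It follows that the bounded multilinear map $\phi\circ\pi^{-1}$ is \emph{automatically} separately ultraweak-weak$^*$ continuous on $\pi(\mathcal{A})^n$, and only then does Lemma \ref{weak extension} apply; this is precisely the step singled out in Remark \ref{T_n}. With that justification inserted, the rest of your plan goes through as written.
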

	
	Note that for $\psi \in \LL^n_w(\cl{\pi(\mathcal{A})}, \mathcal{V}:\mathcal{C})$ being $\mathcal{C}$-multimodular is equivalent to having the property that $\psi(a_1, \dots, a_n) = 0$ if any of the arguments $a_j \in (1 -p)\overline{\pi(\mathcal{A})}$. As we shall need it later, we review the construction of the maps $T_n,~S_n,$ and $W_n$.
	
	\begin{remark}[The map $T_n$]
	\label{T_n}
	The equation 
	\begin{equation}
	\phi_1(x_1, \dots, x_n)= \phi(\theta(px_1), \dots, \theta(px_n)) \h \text{for all} \h x_1, \dots, x_n \in \pi(\mathcal{A})
	\end{equation}
	defines $\phi_1 \in \LL_c^n(\pi(\mathcal{A}), \mathcal{V})$. 
	This map is separately ultraweakly-weak* continuous in each of its arguments, because $\pi$ is the universal representation of $\mathcal{A}$, so by \cite{takesaki2013theory} III.2.4 each continuous linear functional on $\pi(\mathcal{A})$ is ultraweakly continuous; that is $\phi_1 \in \LL_w^n(\pi(\mathcal{A}), \mathcal{V})$.
	Moreover, by \cite{sinclair1995hochschild} Lemma 3.3.3, we may extend $\phi_1$ to $\overline{\phi_1} \in \LL_w^n(\overline{\pi(\mathcal{A})}, \mathcal{V})$ without change of norm. The map $T_n$ is then defined by $T_n\phi = \overline{\phi_1}$. 
	\end{remark}
	
	\begin{remark}
	The map $S_n:  \LL_w^n(\overline{\pi(\mathcal{A})}, \mathcal{V}) \to  \LL_w^n(\overline{\mathcal{A}}, \mathcal{V})$ is defined by
	$$
	(S_n\psi)(a_1, \dots, a_n) = \psi(\theta^{-1}(a_1), \dots, \theta^{-1}(a_n)).
	$$
	\end{remark}

	\begin{remark}
	\label{W_n wins}
	The map $W_n: \LL_w^n(\overline{\pi(\mathcal{A})},\mathcal{V}) \to \LL_c^n(\mathcal{A},\mathcal{V})$ is defined by
	$$
	W_n\psi(a_1, \dots, a_n) = \psi(\pi(a_1), \dots, \pi(a_n)).
	$$
	Note that, if $\mathcal{A} \subseteq \mathcal{V} = \BB(\HH)$ then $W_n$ maps $\LL_w^n(\pi(\mathcal{A}),\mathcal{A})$ to $\LL_c^n(\mathcal{A})$.
	\end{remark}
	
	\begin{lemma}[\cite{sinclair1995hochschild} Lemma 3.3.5]
	Let $\A$ be a C*-algebra and let $\mathcal{V}$ be a dual normal $\A$-bimodule.
	Then the homomorphism 
	$$
	H_w^n(\A,\mathcal{V}) \to H_c^n(\A, \mathcal{V}) \h \text{induced by} \h \LL_w^n(\A,\mathcal{V}) \to \LL_c^n(\A,\mathcal{V})
	$$
	is surjective.
	\qed
	\end{lemma}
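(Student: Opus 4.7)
The plan is to take a cocycle $\phi \in \LL_c^n(\A, \VV)$ and, using the maps $T_n, S_n, W_n$ from Lemma \ref{lemma3.3.4} together with an averaging argument over the commutative C*-algebra $\mathcal{C}$ generated by $1$ and the central projection $p$ from Lemma \ref{takesaki}, produce a cohomologous cocycle that lies in $\LL_w^n(\A, \VV)$.

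First I would form $T_n\phi \in \LL_w^n(\cl{\pi(\A)}, \VV)$, which is again a cocycle since $\partial T_n\phi = T_{n+1}\partial\phi = 0$ by property (i). Because $\mathcal{C}$ is generated by the two commuting projections $1$ and $p$, its unitary group is isomorphic to $\T \times \T$, which is abelian and hence amenable. I would then apply the ultraweak-weak* analog of the reduction-of-cocycles construction of Lemma \ref{lemma3.2.4} (averaging over $\UU_{\mathcal{C}}$) to produce a map $K_n T_n\phi \in \LL_w^{n-1}(\cl{\pi(\A)}, \VV)$ such that
$$
\psi' \;:=\; T_n\phi - \partial(K_n T_n\phi) \;\in\; \LL_w^n(\cl{\pi(\A)}, \VV : \mathcal{C}).
$$
Since $\psi'$ is $\mathcal{C}$-multimodular, property (v) of Lemma \ref{lemma3.3.4} applies to give $W_n\psi' = S_n\psi' \in \LL_w^n(\cl{\A}, \VV)$. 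In particular $\psi := W_n\psi'$, viewed on $\A$, lies in $\LL_w^n(\A, \VV)$.

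To conclude, I would use $W_n T_n = \id$ (property (vi)) and $W_n \partial = \partial W_{n-1}$ (property (i)) to compute
$$
\psi \;=\; W_n T_n\phi - W_n \partial(K_n T_n\phi) \;=\; \phi - \partial(W_{n-1} K_n T_n\phi),
$$
which exhibits $\phi - \psi$ as a coboundary in $\LL_c^n(\A, \VV)$. Since $\psi \in \LL_w^n(\A, \VV)$ is a cocycle (its boundary equals $S_{n+1}\partial\psi' = 0$), the class $[\phi] \in H_c^n(\A, \VV)$ is the image of $[\psi] \in H_w^n(\A, \VV)$, proving surjectivity.

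The main obstacle is justifying the ultraweak-weak* version of Lemma \ref{lemma3.2.4}: one needs the averaging operators $J_1$ and $G_k$ used in the recursive construction of $K_n$ to preserve separate ultraweak-weak* continuity when the target is a dual normal module. Because $\UU_{\mathcal{C}}$ is compact (or, if treated discretely, abelian) and $\VV$ is dual normal, this can be verified by pairing with elements of the predual $\VV_*$ and invoking dominated convergence: for any ultraweakly convergent net in one coordinate, the integrand converges pointwise on $\UU_{\mathcal{C}}$ and is uniformly bounded by the norm bound on $\phi$, so the integral (invariant mean) converges, yielding weak* convergence of the averaged map. Once this technical point is in place, the rest of the argument is a formal manipulation of the identities in Lemma \ref{lemma3.3.4}.
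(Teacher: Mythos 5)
Your argument is correct and is essentially the proof of Sinclair--Smith's Lemma 3.3.5 that the paper cites without reproving: push $\phi$ forward by $T_n$, reduce modulo a coboundary to a $\mathcal{C}$-multimodular cocycle, and pull back with $W_n=S_n$, using properties (i), (v), (vi) of Lemma \ref{lemma3.3.4} exactly as you do. The one technical point you flag dissolves if you observe that $\mathcal{C}$ is already generated by the \emph{finite} unitary group $\set{1,\,2p-1}$, so the averaging in Lemma \ref{lemma3.2.4} is a finite sum and manifestly preserves separate ultraweak-weak* continuity; this avoids any appeal to dominated convergence, which is delicate here since ultraweak convergence is along nets and an invariant mean on a discrete infinite group is only finitely additive.
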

	
	\begin{theorem}[\cite{sinclair1995hochschild} Theorem 3.3.1]
	\label{theorem3.3.1}
	Let $\mathcal{A}$ be a C*-algebra acting on a Hilbert space $\HH$ with weak closure $\overline{\mathcal{A}}$. 
	Additionally, let $\mathcal{V}$ be a dual normal $\overline{\mathcal{A}}$-module.
	Then, 
	$$
	H_c^n(\A,\mathcal{V}) \iso H_w^n(\A,\mathcal{V}) \iso H_w^n(\cl{\A}, \mathcal{V})
	$$
	\end{theorem}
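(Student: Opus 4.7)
The plan is to prove the two stated isomorphisms separately. The first, $H_c^n(\A, \mathcal{V}) \iso H_w^n(\A, \mathcal{V})$, will rely on the splitting machinery $T_n$, $S_n$ from Lemma \ref{lemma3.3.4}. The second, $H_w^n(\A, \mathcal{V}) \iso H_w^n(\cl{\A}, \mathcal{V})$, will follow directly from the unique weak extension in Lemma \ref{weak extension}.

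For the first isomorphism, consider the map $H_w^n(\A, \mathcal{V}) \to H_c^n(\A, \mathcal{V})$ induced by the inclusion $\LL_w^n(\A,\mathcal{V}) \hookrightarrow \LL_c^n(\A,\mathcal{V})$. Surjectivity is exactly the content of the preceding lemma (\cite{sinclair1995hochschild} Lemma 3.3.5). For injectivity, I would take a cocycle $\phi \in \LL_w^n(\A, \mathcal{V})$ with $\phi = \partial \psi$ for some $\psi \in \LL_c^{n-1}(\A, \mathcal{V})$. Since $S_n T_n$ is a projection onto $\LL_w^n(\A, \mathcal{V})$ by Lemma \ref{lemma3.3.4}(iv) and commutes with $\partial$ by Lemma \ref{lemma3.3.4}(i),
$$
\phi = S_n T_n \phi = S_n T_n \partial \psi = \partial (S_{n-1} T_{n-1} \psi),
$$
with $S_{n-1} T_{n-1} \psi \in \LL_w^{n-1}(\A, \mathcal{V})$. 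Thus $\phi$ is a coboundary in the ultraweak-weak* complex as well, establishing injectivity.

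For the second isomorphism, I would work directly at the cochain level. Lemma \ref{weak extension} tells us that each $\phi \in \LL_w^n(\A, \mathcal{V})$ extends uniquely (and without change of norm) to a map $\cl{\phi} \in \LL_w^n(\cl{\A}, \mathcal{V})$, while restriction to $\A^n$ is an obvious inverse; together they form mutually inverse bijections between the two cochain spaces. Moreover $\partial$ commutes with these bijections, since $\partial \cl{\phi}$ is itself a separately ultraweak-weak* continuous extension of $\partial \phi$, so uniqueness in Lemma \ref{weak extension} forces $\partial \cl{\phi} = \overline{\partial \phi}$. Restriction and extension are therefore chain isomorphisms and descend to the claimed isomorphism on cohomology.

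All the substantial work already lives in Lemma \ref{lemma3.3.4} (the construction of $T_n$, $S_n$, $W_n$ via the universal representation and the central projection of Lemma \ref{takesaki}) and in Lemma \ref{weak extension}. With those two tools in hand, no real obstacle remains, and the argument reduces to formal diagram-chasing with the projection $S_n T_n$ and unique extension.
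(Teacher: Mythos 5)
Your proposal is correct and follows essentially the same route as the paper: surjectivity from Lemma 3.3.5, injectivity via the identity $\phi = S_nT_n\partial\psi = \partial S_{n-1}T_{n-1}\psi$ using Lemma \ref{lemma3.3.4}(i) and (iv), and the second isomorphism from the unique extension/restriction bijection of Lemma \ref{weak extension}. Your added remark that uniqueness of the extension forces $\partial\cl{\phi} = \overline{\partial\phi}$ is a detail the paper leaves implicit, but otherwise the two arguments coincide.
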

	
	\begin{proof}
	By the previous lemma we have that the map $H_w^n(\A,\mathcal{V}) \to H_c^n(\A,\mathcal{V})$ is a surjection.
	To see that this map is injective first note that for $\psi \in \LL_c^{n-1}(\A,\mathcal{V})$ we have that $S_{n-1}T_{n-1}\psi \in \LL_w^{n-1}(\A,\mathcal{V})$ by Lemma \ref{lemma3.3.4} (iv). 
	Next, if $\phi \in \LL_w^n(\A,\mathcal{V})$ with $\phi = \partial \psi$ where $\psi \in \LL_c^{n-1}(\A,\mathcal{V})$, then
	$$
	\phi = \partial \psi = S_nT_n \partial\psi = \partial S_{n-1}T_{n-1}\psi.
	$$
	Thus, the map $H_w^n(\A,\mathcal{V}) \to H_c^n(\A,\mathcal{V})$ induced by the inclusion is also injective and so an isomorphism. 
	
	Lastly, by Lemma \ref{weak extension}, the restriction map $\LL_w^n(\cl{\A}, \mathcal{V}) \to \LL_w^n(\A,\mathcal{V})$ is an isomorphism and so we are done.
	\end{proof}
	
	\begin{remark}
	\label{all of these too}
	Note that the ultraweak closure of $\cu{X}$ is $\BB(\ell^2(X))$.
	Hence, by the previous theorem we have that
	$$
	H_c^n(\cu{X},\BB(\ell^2(X))) \iso H_c^n(\BB(\ell^2(X))).
	$$
	\end{remark}

\subsection{On the Vanishing of the Ultraweak-Weak* Continuous Cohomology of Uniform Roe Algebras}

	 Before we prove Theorem \ref{main 3} we will need  a few lemmas. Once more for notational convenience throughout we let: $A=\cu{X},~\BB=\BB(\ell^2(X))$, and $\ell = \ell^{\infty}(X)$.
	 
	\begin{lemma}
	Let $\pi$ be the universal representation of $A$, and let $p$ be the projection from Lemma \ref{takesaki}. 
	If $\set{q_{\alpha}}$ is the net of finite rank projections in $\ell$ with its usual ordering then
	$$
	\pi(q_{\alpha}) \xrightarrow{\text{\tiny{ultraweakly}}} p \h \text{in} \h \cl{\pi(\ell)}.
	$$
	\end{lemma}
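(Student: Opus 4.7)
The plan is to identify $p$ as the support projection (in $A^{**} \cong \overline{\pi(A)}$) of the closed two-sided ideal $\mathcal{K} := \mathcal{K}(\ell^2(X)) \subseteq A = C^*_u(X)$, and then apply the standard fact that an increasing approximate unit of an ideal converges ultraweakly to the identity of its enveloping von Neumann algebra.

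First I would verify the preliminaries: $\mathcal{K}$ is contained in $A$ because single-point rank-one projections lie in $\ell^\infty(X) \subseteq A$ and generate all compacts by norm-closure; it is an ideal of $A$ since $A \subseteq \BB(\ell^2(X))$; and the increasing net $\{q_\alpha\}$ of finite-rank projections in $\ell$ is an approximate unit for $\mathcal{K}$, as $q_\alpha \to I$ in the strong operator topology on $\ell^2(X)$ implies $q_\alpha k \to k$ in norm for each $k \in \mathcal{K}$.

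The main technical step is the identification of $p$ with the support projection $p_\mathcal{K}$ of $\mathcal{K}$ in $A^{**}$. For any closed two-sided ideal $J$ of a C*-algebra, the bidual $J^{**}$ embeds canonically into $A^{**}$ as $p_J A^{**}$ for a central projection $p_J$. Here $\mathcal{K}^{**}$ is canonically $\BB(\ell^2(X)) = \overline{A}$, which also agrees with $pA^{**}$ via the isomorphism $\theta$ of Lemma \ref{takesaki}. To see $p_\mathcal{K} = p$, I would argue that any representation $\sigma$ of $A$ whose normal extension factors through $(1-p)A^{**}$ must annihilate $\mathcal{K}$: otherwise, simplicity of $\mathcal{K}$ would force $\sigma|_\mathcal{K}$ to be a (nonzero multiple of the) standard representation on $\ell^2(X)$, yielding a subrepresentation of $\sigma$ equivalent to $\pi_0$, in contradiction with orthogonality. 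Consequently $\pi(k) \in pA^{**}$ for every $k \in \mathcal{K}$, which after taking ultraweak closure gives $\mathcal{K}^{**} \subseteq pA^{**}$, and comparing with $\theta$ yields the equality $p_\mathcal{K} = p$.

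With this identification, the result follows: since $\{q_\alpha\}$ is an increasing approximate unit for $\mathcal{K}$, it converges in the ultraweak topology of $\mathcal{K}^{**} = p A^{**}$ to the unit $1_{\mathcal{K}^{**}} = p$. Transferring through the canonical embedding $\mathcal{K}^{**} \hookrightarrow A^{**}$, we conclude $\pi(q_\alpha) \to p$ ultraweakly in $\overline{\pi(A)}$. Each $\pi(q_\alpha)$ lies in the ultraweakly closed abelian subalgebra $\overline{\pi(\ell)}$, so the limit $p$ is automatically in $\overline{\pi(\ell)}$ and the convergence takes place there. The most delicate point will be the identification $p_\mathcal{K} = p$; the rest is routine application of standard enveloping-von-Neumann-algebra theory, together with Lemma \ref{takesaki} for the $\theta$-level bookkeeping.
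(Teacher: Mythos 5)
Your proposal is correct and follows essentially the same route as the paper: both identify $p$ with the central support projection of the ideal $\KK(\ell^2(X))$ in $A^{**}$ (using $\KK^{**}\cong\BB(\ell^2(X))=\cl{A}$) and then invoke the ultraweak convergence of the increasing approximate unit $\set{q_\alpha}$ to that support projection, noting finally that the limit lies in the ultraweakly closed algebra $\cl{\pi(\ell)}$. The only difference is cosmetic: where the paper cites Blackadar III.5.2.11 for the central projection $q$ with $qA^{**}=\KK^{**}$ and $\hat q_\alpha\to q$, you reconstruct the identification $p_{\KK}=p$ by hand via simplicity of $\KK$ and disjointness of representations, which is a valid (if more laborious) substitute.
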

	
	\begin{proof}
	Recall that the double dual of the compact operators $\KK(\HH)^{**}$ is naturally identified with $\BB $ (cf. \cite{takesaki2013theory} II.1.8).
	Moreover, since $\KK(\HH)$ is an ideal in $\cu{X}$ and $\set{q_{\alpha}}$ is an approximate unit for $\KK(\HH)$, by Blackadar \cite{blackadar2006operator} III.5.2.11, there exists a central projection $q \in A^{**}$ such that 
	$$
	\hat{q}_{\alpha} \to q \h \text{in the} \h \sigma(A^{**},A^*) \h \text{topology and} \h qA^{**} = \KK(\HH)^{**} \iso p\cl{\pi(A)}
	$$
	
	Thus, if $\tilde{\pi}$ is the map from Lemma \ref{tilde pi}, using Lemma \ref{tilde pi}, we have that $\tilde{\pi}(q) = p$.
	Moreover, since $\set{q_{\alpha}} \subseteq \ell$ and $\tilde{\pi}$ is a $\sigma(A^{**},A^*)$-ultraweak homeomorphism,
	we have that 
	$$
	\pi(q_{\alpha}) \xrightarrow{\text{\tiny{ultraweakly}}} p \h \text{and} \h p \in \cl{\pi(\ell)}.         
	$$
	\end{proof}
	
%%%%%%%%%%%%%%%%%%%%%%%%%%%%%%%%%%%%%%%%%%%%%%%%%%%%%%%%%
	
	\begin{lemma}
	\label{containment multi}
	If $\phi \in \LL_c^n(A:\ell)$ then $\phi \in \LL_w^n(A)$.
	That is, $\LL_c^n(A:\ell) = \LL_w^n(A:\ell) \subseteq \LL_w^n(A)$.
	\end{lemma}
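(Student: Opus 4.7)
The plan is to verify ultraweak-weak* continuity of $\phi$ separately in each coordinate. Fix $k \in \{1, \dots, n\}$ and elements $a_i \in A$ for $i \neq k$, and set $\phi_k(a) := \phi(a_1, \dots, a_{k-1}, a, a_{k+1}, \dots, a_n)$, a bounded linear map $A \to A$. A standard argument reduces the check to norm-bounded nets: let $(a_\alpha) \subset A$ be a bounded net with $a_\alpha \to a$ ultraweakly in $\BB$. Since $\{\phi_k(a_\alpha)\}$ is then norm bounded, and the finite-rank operators are norm dense in $\mathcal{L}^1(\ell^2(X))$, it suffices to show that for every $(x,y) \in X \times X$, the matrix entries $\phi_k(a_\alpha)_{xy}$ converge to $\phi_k(a)_{xy}$.

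Given $\epsilon > 0$, for each $i \neq k$ pick $b_i \in \cualg{r_i}{X}$ with $\|a_i - b_i\| < \epsilon$. By joint boundedness of the multilinear map $\phi$, a telescoping estimate yields
$$
\|\phi_k(a_\alpha) - \phi(b_1, \dots, b_{k-1}, a_\alpha, b_{k+1}, \dots, b_n)\| \leq C\epsilon
$$
uniformly in $\alpha$, and similarly with $a$ in place of $a_\alpha$. Hence it is enough to establish matrix-entry convergence for the operator with finite-propagation fixed arguments.

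Mimicking the localization in the proof of Lemma \ref{multimodularequal}, we repeatedly use the $\ell$-multimodularity of $\phi$ together with the finite propagation of the $b_i$ to move the rank-one projections $p_x, p_y \in \ell$ inward past the $b_i$'s on either side of the $k$-th slot, arriving at
$$
p_x \phi(b_1, \dots, b_{k-1}, a_\alpha, b_{k+1}, \dots, b_n) p_y = \phi(\tilde b_1, \dots, \tilde b_{k-1}, P a_\alpha Q, \tilde b_{k+1}, \dots, \tilde b_n),
$$
where $P = p_{B_x(R)}$ and $Q = p_{B_y(R')}$ for radii determined by $x, y$, and the propagations $r_i$; by the bounded geometry hypothesis, $P$ and $Q$ are finite rank, and the $\tilde b_i$ are the $b_i$ sandwiched by the corresponding intermediate projections.

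Since $P$ and $Q$ are finite rank, $P a_\alpha Q$ has only finitely many nonzero matrix entries, each of the form $\langle e_{ji}, a_\alpha \rangle$ with $i,j$ ranging over a finite set, and these converge to the corresponding entries of $P a Q$ by ultraweak convergence. Therefore $P a_\alpha Q \to P a Q$ in operator norm, and separate norm continuity of $\phi$ in its $k$-th slot gives norm convergence of the right-hand side above, yielding matrix-entry convergence for the approximating operator. Combined with the earlier $\epsilon$-estimate, this gives the desired convergence $\phi_k(a_\alpha)_{xy} \to \phi_k(a)_{xy}$. The main obstacle is that $a_\alpha$ itself need not have finite propagation, so the Lemma \ref{multimodularequal} localization does not apply to it directly; this is circumvented by approximating only the fixed arguments in norm and then exploiting the finite rank of the resulting sandwich projections to reduce convergence of $\phi_k(a_\alpha)_{xy}$ to convergence of finitely many matrix entries of $a_\alpha$.
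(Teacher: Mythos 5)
Your proof is correct, but it takes a genuinely different route from the paper's. The paper's argument is entirely soft: it invokes the Sinclair--Smith machinery of Lemma \ref{lemma3.3.4}, computing that $S_nT_n\phi=\phi$ for any $\phi\in\LL_c^n(A:\ell)$ --- the key input being the preceding lemma that the central projection $p$ of Lemma \ref{takesaki} lies in $\overline{\pi(\ell)}$, so that $\overline{\pi(\ell)}$-multimodularity of $T_n\phi$ absorbs all the $p$'s --- and then concludes because $S_nT_n$ is a projection onto $\LL_w^n(A,\BB)$. You instead verify separate ultraweak--weak* continuity by hand: approximate the fixed arguments by finite-propagation operators, use $\ell$-multimodularity and bounded geometry to localize the varying slot between finite-rank projections $P,Q$ (the same mechanism as in Lemma \ref{multimodularequal}), and reduce everything to convergence of finitely many matrix entries of $a_\alpha$. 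Both arguments are sound. Yours is more elementary and self-contained --- it avoids the universal representation, the identification of $p$, and Lemma \ref{lemma3.3.4} entirely, and it makes visible \emph{why} multimodularity over the diagonal forces weak continuity --- at the cost of being longer and of leaning on the ``standard'' reduction to norm-bounded nets. That reduction is legitimate (for each $\omega$ in the predual, $\omega\circ\phi_k$ is a bounded functional on $A$ that is ultraweakly continuous on bounded sets, hence the restriction of a trace-class functional, by a Goldstine/Hahn--Banach or Krein--\v{S}mulian argument), but since the paper's definition of $\LL_w^n$ quantifies over arbitrary nets, you should either cite or sketch this step rather than wave at it. The paper's approach buys brevity given that the $S_n,T_n$ apparatus is needed in Section \ref{sec6} anyway; yours buys independence from that apparatus.
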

	
	\begin{proof}
	Since $T_n$ takes $\ell$-multimodular maps to $\cl{\pi(\ell)}$-multimodular maps we have
	\begin{center}
	\begin{tabular}{l l l}
	$S_nT_n\phi(a_1, \dots, a_n)$  & $ = T_n\phi(\theta^{-1}(a_1), \dots, \theta^{-1}(a_n))$ & by the definition of $S_n$ \\
							& $=T_n\phi(p\pi(a_1), \dots, p\pi(a_n))$ & by the properties of $\theta$ \\
							& $= p \cdot T_n\phi(\pi(a_1), \dots, \pi(a_n))$  & since $p$ is central and\\
								&								&  $T_n\phi$ is $\cl{\pi(\ell)}$-multimodular. \\
								& $= p \cdot \phi(a_1, \dots, a_n)$		&by the definition of $T_n$. \\
								& $= \phi(a_1, \dots, a_n)$		&by line (\ref{action}).

	\end{tabular}
	\end{center}
	
	Then, since $S_nT_n$ is a projection from $\LL_c^n(A)$ onto $\LL_w^n(A,\BB)$, we are done.
	\end{proof}
	
	\begin{lemma}
	\label{up one}
	If $H_c^{n-1}(A) = 0$ then the map $H_w^n(A) \to H_c^n(A)$ is an injection.
	\end{lemma}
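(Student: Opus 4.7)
My plan is to take any norm-continuous witness $\psi \in \LL_c^{n-1}(A, A)$ of the identity $\phi = \partial \psi$ (where $\phi \in \LL_w^n(A)$ represents a class in the kernel of $H_w^n(A) \to H_c^n(A)$) and correct it by a coboundary to make it weakly continuous. First, I view $\psi$ as an element of $\LL_c^{n-1}(A, \BB)$ and apply the Sinclair--Smith map $S_{n-1}T_{n-1}$ from Lemma \ref{lemma3.3.4}. This produces $\tilde{\psi} := S_{n-1}T_{n-1}\psi \in \LL_w^{n-1}(A, \BB)$, and since $S_n T_n$ is the identity on $\LL_w^n(A, \BB)$ and commutes with $\partial$ (parts (i) and (iv) of Lemma \ref{lemma3.3.4}), we obtain $\partial \tilde{\psi} = \phi$. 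The drawback is that $\tilde{\psi}$ may have values outside $A$.

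To correct this, I form the cocycle $\xi := \psi - \tilde{\psi} \in \LL_c^{n-1}(A, \BB)$ and apply the Sinclair--Smith reduction (Lemma \ref{lemma3.2.4}), averaging over the unitary group $\UU$ of $\ell = \ell^\infty(X)$. The $\ell$-multimodular cocycle $\chi := \xi - \partial K_{n-1}\xi$ has values in $A$ by Lemma \ref{multimodularequal} and is weakly continuous by Lemma \ref{containment multi}. The hypothesis $H_c^{n-1}(A) = 0$ then yields, via the chain
$$
H_c^{n-1}(A:\ell) = H_c^{n-1}(A, \BB:\ell) \iso H_c^{n-1}(A, \BB) = 0
$$
(using Lemma \ref{multimodularequal}, Theorem \ref{theorem3.2.7}, and Remark \ref{key}), a witness $\chi = \partial \rho$ with $\rho \in \LL_c^{n-2}(A:\ell)$, which in fact lies in $\LL_w^{n-2}(A, A)$ by Lemma \ref{containment multi}.

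Setting $\psi' := \psi - \partial \rho \in \LL_c^{n-1}(A, A)$ gives $\partial \psi' = \phi$, and the identity $\chi = \partial \rho$ rearranges to $\psi' = \tilde{\psi} + \partial K_{n-1}\xi$. The main obstacle is upgrading $\psi'$ from $\LL_c^{n-1}(A, A)$ to $\LL_w^{n-1}(A, A)$: while $\tilde{\psi}$ is weakly continuous by construction, the correction term $\partial K_{n-1}\xi$ requires care. I would address this by propagating weak continuity through the averaging operator $K_{n-1}$ using the uniform $\epsilon$-$r$-approximations of Corollary \ref{bf corollary}, in the spirit of the proof of Theorem \ref{main 2} that showed $K_n\phi$ takes values in $A$ for weakly continuous $\phi$. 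In the warm-up case $n = 2$, the correction $\partial K_1 \xi$ is simply the commutator $a \mapsto ab - ba$ for the constant $b = K_1\xi \in \BB$, which is automatically weakly continuous; extending this observation to higher $n$ is where the bulk of the technical work will lie.
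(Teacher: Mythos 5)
Your construction breaks at the final step, and the obstruction is structural rather than technical. The correction you subtract from $\psi$ is $\partial\rho$ with $\rho\in\LL_c^{n-2}(A:\ell)$; by Lemma \ref{containment multi} such a $\rho$ is separately ultraweak--weak* continuous, and the coboundary operator preserves separate ultraweak--weak* continuity (each term of $\partial\rho$ is $\rho$ composed with multiplication by a fixed operator). Hence $\psi'=\psi-\partial\rho$ is weakly continuous if and only if $\psi$ already was, which fails for a generic norm-continuous primitive. Equivalently, in your identity $\psi'=\tilde{\psi}+\partial K_{n-1}\xi$ one has $\partial K_{n-1}\xi=\xi-\chi$, the difference of the (generally non-weakly-continuous) $\xi=\psi-\tilde{\psi}$ and the weakly continuous $\chi$; so the statement you propose to prove in your last paragraph --- that $\partial K_{n-1}\xi$ is weakly continuous --- is false precisely when it is needed. (It does hold when $n=2$, but only because $\xi$ is then a bounded derivation into $\BB$ and hence automatically weakly continuous; there is no analogue of this in higher degrees.) Moreover, the Braga--Farah machinery you invoke cannot be applied to $K_{n-1}\xi$: Lemma \ref{general phi} and Corollary \ref{bf corollary} require the map being averaged to lie in $\LL_w^{n-1}$, and $\xi$ does not.

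A telling symptom is that your argument never genuinely uses the hypothesis $H_c^{n-1}(A)=0$: the vanishing $H_c^{n-1}(A:\ell)=H_c^{n-1}(A,\BB:\ell)=0$ you appeal to holds unconditionally by Remark \ref{key}, via Theorem \ref{theorem3.2.7} and the hyperfiniteness of $\BB$ (Theorem \ref{iszero}). If your proof were correct it would establish injectivity of $H_w^n(A)\to H_c^n(A)$ with no hypothesis at all, which is not what the lemma claims. The point of the hypothesis in the paper's argument is to modify the primitive $\psi$ itself: $H_c^{n-1}(A)=0$ is used to replace $\psi$ by an $\ell^\infty(X)$-multimodular primitive, which is then automatically $A$-valued and weakly continuous by Lemmas \ref{multimodularequal} and \ref{containment multi}. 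Any correction $\partial\tau$ that repairs $\psi$ must itself fail to be weakly continuous, so that its discontinuity cancels that of $\psi$; a coboundary of a multimodular (hence weakly continuous) cochain, like your $\partial\rho$, can never do this.
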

	
	\begin{proof}
	Suppose that $\phi \in \LL_w^n(A)$ with $\phi = \partial \psi$ for some $\psi \in \LL_c^{n-1}(\A)$.
	So if $H_c^{n-1}(A) = 0$, we have $H_c^{n-1}(A:\ell) \iso H_c^{n-1}(A)$, since by Remark \ref{key} $H_c^{n-1}(A:\ell) = 0$.
	Hence, without loss of generality, we may assume $\psi \in \LL_c^{n-1}(A:\ell) \subseteq \LL_w^{n-1}(\A)$. 
	Thus, $[\phi] = 0$ in $H_w^n(A)$ and we are done.
	\end{proof}

%%%%%%%%%%%%%%%%%%%%%%%%%%%%%%%%%%%%%%%%%%%%%	
	We are now ready to prove Theorem \ref{main 3}.

	 \begin{proof}[Proof of Theorem \ref{main 3}]
	 Since derivations are automatically weakly continuous (\cite{10.2307/1970433} Lemma 3), $H_w^1(A) = H_c^1(A) = 0$. 
	 Next, given any $n>1$ we have that $H_c^{n-1}(A) = 0$, so by Lemma \ref{up one}, $H_w^n(A) \to H_c^n(A)$ is an injection.
	 Moreover, $H_c^n(A) = 0$, so we must have that $H_w^n(A) = 0$.
	 Since $n$ was arbitrary we are done.
	 \end{proof}

%%%%%%%%%%%%%%%%%%%%%%%%%%%%%%%%%%%%%%%%%%%%%%%%%%%%%%%%%	

\bibliographystyle{amsplain}
\renewcommand\refname{\centering Bibliography}
\bibliography{/Users/mattlorentz/Dropbox/research/preambles_and_biblio/biblio.bib} 

\providecommand{\bysame}{\leavevmode\hbox to3em{\hrulefill}\thinspace}
\providecommand{\MR}{\relax\ifhmode\unskip\space\fi MR }
% \MRhref is called by the amsart/book/proc definition of \MR.
\providecommand{\MRhref}[2]{%
  \href{http://www.ams.org/mathscinet-getitem?mr=#1}{#2}
}
\providecommand{\href}[2]{#2}
\begin{thebibliography}{1}

\bibitem{blackadar2006operator}
B.~Blackadar, \emph{Operator algebras}, Encyclopaedia of Mathematical Sciences,
  vol. 122, Springer-Verlag, Berlin, 2006, Theory of $C^*$-algebras and von
  Neumann algebras, Operator Algebras and Non-commutative Geometry, III.
  \MR{2188261}

\bibitem{braga2018rigidity}
B.~M. Braga and I.~Farah, \emph{On the rigidity of uniform {R}oe algebras over
  uniformly locally finite coarse spaces}, Trans. Amer. Math. Soc. \textbf{374}
  (2021), no.~2, 1007--1040. \MR{4196385}

\bibitem{hochschild1945cohomology}
G.~Hochschild, \emph{On the cohomology groups of an associative algebra}, Ann.
  of Math. (2) \textbf{46} (1945), 58--67. \MR{11076}

\bibitem{10.2307/1970433}
R.~Kadison, \emph{Derivations of operator algebras}, Ann. of Math. (2)
  \textbf{83} (1966), 280--293. \MR{193527}

\bibitem{MR318907}
R.~Kadison and J.~Ringrose, \emph{Cohomology of operator algebras. {II}.
  {E}xtended cobounding and the hyperfinite case}, Ark. Mat. \textbf{9} (1971),
  55--63. \MR{318907}

\bibitem{lorentz2020}
M.~Lorentz and R.~Willett, \emph{Bounded derivations on uniform {R}oe
  algebras}, Rocky Mountain J. Math. \textbf{50} (2020), no.~5, 1747--1758.
  \MR{4170683}

\bibitem{reed2012methods}
M.~Reed and B.~Simon, \emph{Methods of modern mathematical physics. {I}.
  {F}unctional analysis}, Academic Press, New York-London, 1972. \MR{0493419}

\bibitem{sinclair1995hochschild}
A.~Sinclair and R.~Smith, \emph{Hochschild cohomology of von {N}eumann
  algebras}, London Mathematical Society Lecture Note Series, vol. 203,
  Cambridge University Press, Cambridge, 1995. \MR{1336825}

\bibitem{takesaki2013theory}
M.~Takesaki, \emph{Theory of operator algebras. {I}}, Encyclopaedia of
  Mathematical Sciences, vol. 124, Springer-Verlag, Berlin, 2002, Reprint of
  the first (1979) edition, Operator Algebras and Non-commutative Geometry, 5.
  \MR{1873025}

\end{thebibliography}
\addcontentsline{toc}{section}{\numberline{}Bibliography}	
\end{document}